\theoremstyle{plain}
\newcommand{\CJ}{{\mathcal {J}}}
\newcommand{\CO}{{\mathcal {O}}}
\newcommand{\CS}{{\mathcal {S}}}
\newcommand{\CX}{{\mathcal {X}}}
\newcommand{\CY}{{\mathcal {Y}}}
\newcommand{\CZ}{{\mathcal {Z}}}
\newcommand{\Alb}{{\mathrm{Alb}}}
\newcommand{\Aut}{{\mathrm{Aut}}}
\newcommand{\Char}{{\mathrm{char}}}
\newcommand{\Br}{{\mathrm{Br}}}
\renewcommand{\div}{{\mathrm{div}}}
\newcommand{\red}{{\mathrm{red}}}
\newcommand{\Gal}{{\mathrm{Gal}}}
\newcommand{\Frac}{{\mathrm{Frac}}}
\newcommand{\Hom}{{\mathrm{Hom}}}
\newcommand{\Ker}{{\mathrm{Ker}}}
\newcommand{\NS}{{\mathrm{NS}}}
\newcommand{\non}{{\mathrm{non-}}}
\newcommand{\Pic}{\mathrm{Pic}}
\renewcommand{\Im}{{\mathrm{Im}}}
\font\cyr=wncyr10  \newcommand{\Sha}{\hbox{\cyr X}}
\newcommand{\cris}{{\mathrm{cris}}}
\newcommand{\syn}{{\mathrm{syn}}}
\newcommand{\tor}{{\mathrm{tor}}}
\newcommand{\Coker}{{\mathrm{Coker}}}
\DeclareMathOperator{\Spec}{Spec}
\newcommand{\QQ}{\mathbb{Q}}
\newcommand{\ZZ}{\mathbb{Z}} 
\newcommand{\GG}{\mathbb{G}} 
\newcommand{\LL}{\mathbb{L}} 
\newcommand{\HH}{\mathbb{H}}
\newcommand{\lra}{\longrightarrow}
\newcommand{\fppf}{\mathrm{fppf}} 
\newcommand{\et}{\mathrm{et}} 
\newcommand{\zar}{\mathrm{Zar}}
\newcommand{\SO}{{\mathscr{O}}}
\newcommand{\cH}{{\mathrm{CH}}}
\newcommand{\olsi}[1]{\,\overline{\!{#1}}}
\newcommand{\ra}{\rightarrow}
\newtheorem{thm}{Theorem}[section]
\newtheorem{cor}[thm]{Corollary}
\newtheorem{lem}[thm]{Lemma}
\newtheorem{prop}[thm]{Proposition}
\newtheorem{conj}[thm]{Conjecture}
\newtheorem{defn}[thm]{Definition}
\theoremstyle{definition}
\theoremstyle{remark}
\newtheorem{rem}[thm]{Remark}
\newcommand{\Addresses}{{
  \bigskip
  \footnotesize

  \textsc{Fakultät für Mathematik, Universität Regensburg, 93040 Regensburg, Germany}\par\nopagebreak
  \textit{E-mail address}: \texttt{yanshuai.qin@ur.de}

  }}
\begin{document}
\date{}

\title{On the Brauer groups of fibrations II}

\author{Yanshuai Qin}
\renewcommand{\thefootnote}{\fnsymbol{footnote}} 
\footnotetext{\emph{Key words}:  Brauer group, Tate conjecture, Tate-Shafarevich group}   
\footnotetext{\emph{MSC classes}: 11G40, 14G17, 14J20, 11G25, 11G35}
\footnotetext{The paper was written when the author was in his Ph.D program in UC Berkeley and was supported by several summer grants from UCB. The author is currently supported by the DFG through CRC1085 Higher Invariants (University of Regensburg).}
\maketitle
\begin{abstract}
    Let $K$ be a number field, and let $\CX$ be a proper regular flat scheme over $\CO_K$ with a generic fiber $X$ geometrically connected over $K$. We prove that there is an exact sequence up to finite groups $0\lra \Sha(\Pic^0_{X/K})\lra \Br(X)\lra \Br(X_{\bar{K}})^{G_K}\lra 0$, which generalizes a theorem of Artin and Grothendieck for arithmetic surfaces to arbitrary dimensions. Consequently, we reduce Artin's question regarding the finiteness of $\Br(\CX)$ for proper regular flat schemes $\CX$ over $\ZZ$ to $3$-dimensional arithmetic schemes.
\end{abstract}
\tableofcontents
\section{Introduction}
Let $C$ be the spectrum of the ring of integers in a number field or a smooth projective geometrically connected curve over a finite field with function field $K$. Let $\CX$ be a $2$-dimensional regular scheme and let $\pi:\mathcal{X}\longrightarrow C$ be a proper flat morphism such that the generic fiber $X$ of $\pi$ is smooth and geometrically connected over $K$.  Artin and Grothendieck (cf.\cite[$\S$ 4]{Gro3} or \cite[Prop. 5.3]{Ulm}) proved that there is an isomorphism 
$$
\Sha(\Pic^{0}_{X/K})\cong \Br(\mathcal{X})
$$
up to finite groups. Assuming the finiteness of $\Sha(\Pic^{0}_{X/K})$ or $\Br(\mathcal{X})$, Milne \cite{Mil4}, Gonzales-Aviles \cite{Goa}, Liu-Lorenzini-Raynaud \cite{LLR1, LLR2}, and Geisser \cite{Gei1} established a precise relationship between their orders. In \cite{Qin1}, the author proved a relation between Tate-Shafarevich groups and geometric Brauer groups for fibrations over arbitrary finitely generated fields. The aim of this article is to generalize the original result of Artin and Grothendieck to fibrations of arbitrary relative dimensions over the spectrum of the ring of integers in a number field.

For any noetherian scheme $X$,  the \emph{cohomological Brauer group}
$$
\Br(X):=H^2(X,\mathbb{G}_m)_{\tor}
$$
is defined to be the torsion part of the etale cohomology group $H^2(X,\GG_m)$.
\subsection*{Tate conjecture for divisors}
Let us first recall the Tate conjecture for divisors over finitely generated fields.
\begin{conj} $($Conjecture $T^1(X,\ell))$.
Let $X$ be a projective and smooth variety over a finitely generated field $k$ of characteristic $p\geq0$, and let $\ell\neq p$ be a prime number. Then the cycle class map\\
$$
\Pic(X)\otimes_\mathbb{Z}\mathbb{Q}_\ell\longrightarrow H^2(X_{k^s},\mathbb{Q}_\ell(1))^{G_k}
$$
is surjective.
\end{conj}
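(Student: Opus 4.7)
The plan is to acknowledge at the outset that $T^1(X,\ell)$ is the classical Tate conjecture for divisors, which is famously open in full generality; what one can realistically sketch is a reduction strategy rather than a complete proof. My first step is to reformulate the cycle class map via the Kummer sequence. For smooth proper $X$, passing to the inverse limit of the $\ell^n$-Kummer sequences yields
$$
0\longrightarrow \NS(X_{k^s})\otimes\ZZ_\ell\longrightarrow H^2(X_{k^s},\ZZ_\ell(1))\longrightarrow T_\ell\Br(X_{k^s})\longrightarrow 0.
$$
Tensoring with $\QQ_\ell$, taking $G_k$-invariants, and using that $\NS(X_{k^s})$ is finitely generated, the conjecture becomes equivalent to the vanishing $V_\ell\Br(X_{k^s})^{G_k}=0$, so the whole problem is recast as a finiteness statement for the Galois-fixed part of the $\ell$-primary Brauer group.

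The second step is to reduce the base field. Since $k$ is finitely generated, one can spread $X$ out to a smooth proper morphism $\SX\to S$ with $S$ of finite type over $\Spec\ZZ$. By proper smooth base change together with a Hilbert irreducibility and specialization argument in the style of Andr\'e and Tate, Tate classes propagate from closed fibers to the generic fiber, reducing $T^1(X,\ell)$ to the analogous statement for a smooth projective variety over a finite field $\FF_q$. There, Deligne's proof of the Weil conjectures identifies the $G_{\FF_q}$-invariants on $H^2(X_{\overline{\FF}_q},\QQ_\ell(1))$ with the generalized eigenspace for the Frobenius eigenvalue $1$, turning $T^1$ into a statement about the multiplicity of that eigenvalue on the transcendental part of $H^2$.

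The hard part, and the step I cannot realistically complete, is the finite field case itself, which is one of the outstanding open problems in arithmetic geometry. The most systematic available attack goes through motives: realize $H^2(X)$ as a summand of a motive built from abelian varieties and then apply Tate's theorem on homomorphisms of abelian varieties over finite fields. This strategy succeeds for abelian varieties (Tate), for most elliptic surfaces (Artin and Swinnerton-Dyer), and for $K3$ surfaces via the Kuga--Satake construction (Nygaard, Madapusi Pera, Maulik), but no uniform argument is known beyond these cases. I therefore expect $T^1(X,\ell)$ to be retained as a standing hypothesis in the fibration theorems that follow, and the value of the above plan is to isolate the single missing input, namely $T^1$ for smooth projective surfaces over finite fields, whose resolution would remove that hypothesis.
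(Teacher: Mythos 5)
The statement you were asked about is a \emph{conjecture} --- it is the Tate conjecture for divisors, which the paper states without proof and never claims to prove; it only uses the standard equivalence between $T^1(X,\ell)$ and the finiteness of $\Br(X_{k^s})^{G_k}(\ell)$, which is exactly the content of Proposition \ref{prop2.1}. You correctly recognized this, and your first step (the Kummer-sequence reformulation
$0\lra \NS(X_{k^s})\otimes\ZZ_\ell\lra H^2(X_{k^s},\ZZ_\ell(1))\lra T_\ell\Br(X_{k^s})\lra 0$,
hence $T^1(X,\ell)\Leftrightarrow V_\ell\Br(X_{k^s})^{G_k}=0$) is precisely the paper's own dictionary, so on the honest part of your write-up there is nothing to fault: declining to prove an open conjecture is the right call, and your framing matches how the paper actually uses $T^1$.

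However, one step in your reduction sketch is genuinely wrong and worth flagging, since you present it as routine: the claim that $T^1(X,\ell)$ over an arbitrary finitely generated field reduces, ``by proper smooth base change together with a Hilbert irreducibility and specialization argument,'' to the case of a finite field. Specialization arguments (Andr\'e, Ambrosi) let Tate or N\'eron--Severi classes pass \emph{from the generic fiber to closed fibers} --- injectivity of specialization of $\NS$ up to finite index --- but your reduction needs the opposite direction: lifting an algebraic class on a closed fiber over $\FF_q$ back to the generic fiber. That lifting is the variational Tate conjecture, which in mixed characteristic is open (Morrow's theorem \cite{Mor} handles the equicharacteristic-$p$ crystalline setting, and Ambrosi's reduction to finite fields \cite{Amb} works only for finitely generated fields of positive characteristic). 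Over a number field no reduction of $T^1$ to finite fields is known, and indeed the paper's own reduction strategy in Section 4 goes in a different direction: it reduces the finiteness of $\Br(X_{\olsi{K}})^{G_K}$ to \emph{surfaces over the same number field}, via Ambrosi's Lefschetz-type result on $\NS$ under hyperplane sections, not to varieties over finite fields. So the ``single missing input'' you isolate ($T^1$ for surfaces over finite fields) is the right bottleneck only in characteristic $p$; in the paper's characteristic-$0$ setting the relevant open input is instead the finiteness of $\Br(X_{\olsi{K}})^{G_K}$ for surfaces over number fields (known conditionally, e.g.\ under the integral Mumford--Tate conjecture by Orr--Skorobogatov \cite{OS}).
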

It is well-known that $T^1(X,\ell)$ is equivalent to the finiteness of $\Br(X_{k^s})^{G_k}(\ell)$ (cf. Proposition \ref{prop2.1}).
\subsection*{Artin's question}
Artin conjectured that $\Br(\mathcal{X})$ is finite for any proper scheme $\CX$ over $\Spec\mathbb{Z}$. For a smooth projective variety $\CX$ over a finite field, it is well-known that the finiteness of $\Br(\mathcal{X})$ is equivalent to the Tate conjecture for divisors on $\CX$ (cf. \cite{Tat2}). For a proper flat regular integral scheme $\CX$ over $\Spec \ZZ$, taking $C=\Spec \Gamma (\CX,\CO_\CX)$, there exists a proper flat morphism (the Stein factorization) $\pi:\CX \lra C$  with a generic fiber geometrically connected over $K=K(C)$. We will show that our main theorem implies that the finiteness of $\Br(\mathcal{X})(\ell)$ is equivalent to the finiteness of $\Sha(\Pic^0_{X/K})(\ell)$ and $T^1(X,\ell)$ for the generic fiber $X$ of  $\pi$. In \cite{SZ}, Skorobogatov and Zarhin conjectured that $\Br(X_{k^s})^{G_k}$ is finite for a smooth projective variety $X$  over a finitely generated field $k$, and they proved the finiteness of $\Br(X_{k^s})^{G_k}$ for abelian varieties and $K3$ surfaces. Our Theorem \ref{mainthm} implies the finiteness of $\Br(\CX)$ under the assumption that $\Br(X_{K^s})^{G_K}$ and $\Sha(\Pic^0_{X/K})$ are finite.

\subsection{Main results}
For two abelian groups $M$ and $N$, we say that they are \emph{almost isomorphic} if there exists a subquotient $M_1/M_0$ of $M$(resp. $N_1/N_0$ of $N$) such that $M_0$ (resp. $N_0$) and $M/M_1$ (resp. $N/N_1$) are finite and $M_1/M_0\cong N_1/N_0$. Let $f:M\rightarrow N$ be a homomorphism with a finite cokernel and a kernel almost isomorphic to an abelian group $H$. In this situation, we say that the sequence $0\rightarrow H\rightarrow M\rightarrow N\rightarrow 0$ is exact up to finite groups.
\begin{thm}\label{mainthm}
Let $\pi:\mathcal{X}\longrightarrow C$ be a proper flat morphism, where $C$ is $\Spec \CO_K$ for some number field $K$. Assume that $\mathcal{X}$ is regular and the generic fiber $X$ of $\pi$ is projective and geometrically connected over $K$. Then there is an exact sequence up to finite groups\\
$$
0\longrightarrow \Sha(\Pic_{X/K}^0)\longrightarrow \Br(\mathcal{X}) \longrightarrow  \Br(X_{\olsi{K}})^{G_K}\lra 0.
$$
\end{thm}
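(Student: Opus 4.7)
\medskip
\noindent\textbf{Proof plan.}
The strategy is to apply the Leray spectral sequence for $\pi\colon\CX\to C$ with coefficients in $\GG_m$ and to read off the exact sequence of the theorem from the associated filtration on $H^2(\CX,\GG_m)$, while absorbing all non-canonical or pathological contributions into the slogan ``up to finite groups''. Concretely, I would begin with
$$
E_2^{p,q}=H^p(C,R^q\pi_*\GG_m)\ \Longrightarrow\ H^{p+q}(\CX,\GG_m).
$$
Since $X$ is geometrically connected over $K$ and $C=\Spec\CO_K$ is normal, Stein factorization gives $\pi_*\GG_m=\GG_m$, so $E_2^{2,0}=\Br(\Spec\CO_K)$ is finite by global class field theory. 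The two-step filtration of $\Br(\CX)$ then reads, modulo finite groups,
$$
0\longrightarrow H^1(C,R^1\pi_*\GG_m)\longrightarrow \Br(\CX)\longrightarrow \ker\bigl(d_2^{0,2}\colon E_2^{0,2}\to E_2^{2,1}\bigr)\longrightarrow 0,
$$
and I would simultaneously check that higher differentials and the $E_\infty$ versus $E_2$ discrepancy at $E^{1,1}$ contribute only finitely.

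For the kernel term I would identify $H^1(C,R^1\pi_*\GG_m)$ with $\Sha(\Pic^0_{X/K})$ up to finite groups. The sheaf $R^1\pi_*\GG_m$ represents the relative Picard functor and its generic stalk is $\Pic_{X/K}$. Pulling back the short exact sequence $0\to\Pic^0_{X/K}\to\Pic_{X/K}\to\NS_{X/K}\to 0$ from the generic point, the Néron--Severi contribution to $H^1(C,-)$ is finite because $\NS_{X_{\olsi K}}$ is finitely generated and $C$ is a number ring. The connected part of $R^1\pi_*\GG_m$ differs from the Néron model $\CP$ of $\Pic^0_{X/K}$ only through the finitely many singular/non-reduced fibers, so that $H^1(C,R^1\pi_*\GG_m)$ agrees up to finite groups with $H^1(C,\CP)$, which in the number field case is $\Sha(\Pic^0_{X/K})$ by the classical identification (cf.\ the Milne/Gonzales-Avil\'es results already cited in the introduction), after correcting by finitely many terms coming from the archimedean places and the Néron component groups.

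For the quotient term I would identify $\ker(d_2^{0,2})$ with $\Br(X_{\olsi K})^{G_K}$ up to finite groups. The geometric generic stalk of $R^2\pi_*\GG_m$ is $\Br(X_{\olsi K})$, and the edge map factors as
$$
H^0(C,R^2\pi_*\GG_m)\longrightarrow (R^2\pi_*\GG_m)_{\bar\eta}^{G_K}\supseteq \Br(X_{\olsi K})^{G_K}.
$$
The deviation between $H^0(C,R^2\pi_*\GG_m)$ and $\Br(X_{\olsi K})^{G_K}$ comes entirely from the skyscraper contributions of $R^2\pi_*\GG_m$ at closed points. Using proper base change for the Kummer sheaves $\mu_n$ (via the sequence $1\to\mu_n\to\GG_m\to\GG_m\to 1$ on $\CX$ and on each special fiber), each such stalk is controlled by $H^2(\CX_{\bar c},\mu_n)$ of a proper variety over a separably closed field, hence is finite at each $\ell$; only finitely many fibers give non-zero contribution, so the total defect is finite. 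The same Kummer/proper base change input also controls the image of $d_2^{0,2}$ inside $H^2(C,R^1\pi_*\GG_m)$: it lands in a subgroup which, after the Picard/Néron--Severi analysis of the previous paragraph, is finite.

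The main obstacle is the spectral sequence bookkeeping at the two places where infinite groups meet: the identification of $H^1(C,R^1\pi_*\GG_m)$ with $H^1(C,\CP)$ must handle the non-smooth locus of $\pi$ (where $R^1\pi_*\GG_m$ is not quite a Néron model), and the control of $d_2^{0,2}$ requires showing that the Brauer-type piece of $H^0(C,R^2\pi_*\GG_m)$ does not leak into an a priori infinite target $H^2(C,\CP)$. I expect to handle the first by restricting to a dense open $U\subset C$ over which $\pi$ is smooth proper, running the clean Leray there, and re-attaching the finitely many special fibers by a purity/Gysin argument using the regularity of $\CX$; the second by combining proper base change for torsion coefficients with a direct computation showing that the relevant image is annihilated modulo a finite group by the standard Brauer/Picard comparison maps. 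Once these technical reductions are in place, splicing the two identifications with the Leray filtration yields the desired exact sequence up to finite groups.
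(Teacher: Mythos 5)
There is a genuine gap, and it sits exactly at the point the paper identifies as the crux. Your Leray skeleton is the classical Artin--Grothendieck route, which works in relative dimension $1$ precisely because there $\Br(X_{\olsi{K}})=0$ and the $E^{0,2}$ term causes no trouble; in higher relative dimension the whole difficulty of the theorem is the finiteness of the cokernel of $\Br(\CX)\lra\Br(X_{\olsi{K}})^{G_K}$, and your treatment of the corresponding piece ($H^0(C,R^2\pi_*\GG_m)$ versus $\Br(X_{\olsi{K}})^{G_K}$) does not work. First, the stalk picture is wrong: since $\GG_m$ is not torsion, proper base change does not apply, and the stalk of $R^2\pi_*\GG_m$ at a closed geometric point over $v$ is $H^2(\CX_{\CO_v^{sh}},\GG_m)$, typically an infinite group even at good-reduction places; the sheaf is not ``generic part plus finitely many skyscrapers,'' so the defect is not supported on finitely many fibers. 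Second, the defect between the everywhere-locally-extendable subgroup of $\Br(X_{\olsi{K}})^{G_K}$ and the full group is governed place by place by $\Br(X_{\olsi{K_v}})^{G_{K_v}}/\Br(\CX_{\CO_v})$, whose finiteness at a \emph{single} place is already a deep theorem (the $p$-adic local invariant cycle theorem, Proposition \ref{geosurj}, i.e.\ \cite[Cor.\ 1.9]{Qin3}); and even granting that, ``finite at each $\ell$'' summed over all $\ell$ and all $v$ is not finite --- one needs uniformity in $v$ and $\ell$. The paper obtains this from two inputs your sketch has no counterpart for: the prime-to-$p_v$ comparison $\Br(\CX_{\CO_v^{sh}})^{G_{k(v)}}(\ell)\cong\Br(X_{\olsi{K_v}})^{G_{K_v}}(\ell)$ at smooth places (Proposition \ref{lpart}, with a cokernel bound uniform in $v$ via \cite[Prop.\ 3.6]{Qin3}), and, for the $p_v$-primary part, the Fontaine--Messing/syntomic computation (the proposition preceding Corollary \ref{lastcor}) showing $\Br(\CX_{\CO_v})(p_v)\lra\Br(X_{\olsi{K_v}})^{G_{K_v}}(p_v)$ is surjective for almost all $v$. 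Your appeal to proper base change for $\mu_n$ cannot substitute for this: at $\ell=p_v$ the comparison between special-fiber and generic-fiber cohomology in mixed characteristic is exactly where the torsion-coefficient arguments fail and where the crystalline/syntomic input is unavoidable.

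The other half of your plan is also thinner than it looks. The identification of $H^1(C,R^1\pi_*\GG_m)$ with $H^1(C,\CP)$ and then with $\Sha(\Pic^0_{X/K})$ via ``the classical identification (Milne/Gonz\'alez-Avil\'es)'' leans on results proved in relative dimension $1$, where $R^1\pi_*\GG_m$ is tied to the N\'eron model by Raynaud's theorem; in higher relative dimension no such comparison is available off the shelf, and the passage from $\Sha(\Pic_{X/K})$ to $\Sha(\Pic^0_{X/K})$ needs an argument of the strength of Proposition \ref{twosha}. Likewise, your claim that the image of $d_2^{0,2}$ in $H^2(C,R^1\pi_*\GG_m)$ is finite ``after the Picard/N\'eron--Severi analysis'' is unsubstantiated: that $H^2$ is not a priori finite, and controlling such connecting maps is exactly what the paper's curve pull-back trick (Lemmas \ref{Kera} and \ref{coker}, following Colliot-Th\'el\`ene--Skorobogatov and Yuan) is for. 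Note that the paper avoids the Leray spectral sequence for $\pi$ altogether: it runs Hochschild--Serre over $K$ and over each $K_v^h$, uses purity (Lemma \ref{purity} together with \v{C}esnavi\v{c}ius) to realize $\Br(\CX)$ as the kernel of a local-global map, the descent theorem of Colliot-Th\'el\`ene--Skorobogatov to get finite index of the image of $\Br(X)$ in $\Br(X_{\olsi{K}})^{G_K}$, and restriction--corestriction via a point over a finite extension to control the kernels. To repair your proposal you would have to reprove essentially all of these ingredients inside the spectral-sequence bookkeeping, and the $p_v$-part of the local comparison cannot be reached by the tools you invoke.
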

For arithmetic schemes of dimension $\geq 3$, the above question was first studied by Tankeev (cf.\cite{Tan1, Tan2, Tan3}), who proved the result for some special arithmetic schemes. Geisser (\cite{Gei2}) was the first to investigate fibrations of arbitrary relative dimensions over finite fields using étale motivic cohomology theory. In \cite{Qin1}, the author proved a similar result for fibrations over arbitrary finitely generated fields.
 \begin{cor}
$\Br(\CX)(\ell)$ is finite if and only if  $\Sha(\Pic^0_{X/K})(\ell)$ is finite and $T^1(X,\ell)$ holds.
\end{cor}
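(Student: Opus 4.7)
The plan is to derive the corollary directly from Theorem \ref{mainthm} together with the well-known equivalence between $T^1(X,\ell)$ and the finiteness of $\Br(X_{\bar K})^{G_K}(\ell)$ (Proposition \ref{prop2.1}).

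First I would apply the functor $(-)(\ell)$, taking $\ell$-primary torsion, to the exact sequence
\[
0 \longrightarrow \Sha(\Pic^0_{X/K}) \longrightarrow \Br(\CX) \longrightarrow \Br(X_{\olsi K})^{G_K} \longrightarrow 0
\]
supplied by the main theorem. Since taking $\ell$-primary parts is exact on abelian groups and preserves finiteness, the unpacked data (a homomorphism $\Br(\CX) \to \Br(X_{\olsi K})^{G_K}$ with finite cokernel and with kernel almost isomorphic to $\Sha(\Pic^0_{X/K})$) restricts to analogous data on $\ell$-primary parts. Concretely, the induced map $\Br(\CX)(\ell) \to \Br(X_{\olsi K})^{G_K}(\ell)$ has finite cokernel and kernel almost isomorphic to $\Sha(\Pic^0_{X/K})(\ell)$; equivalently we obtain an exact sequence up to finite groups
\[
0 \longrightarrow \Sha(\Pic^0_{X/K})(\ell) \longrightarrow \Br(\CX)(\ell) \longrightarrow \Br(X_{\olsi K})^{G_K}(\ell) \longrightarrow 0.
\]

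Next I would note the standard two-out-of-three principle: given such a sequence exact up to finite groups, the middle term is finite if and only if the outer two are finite. Thus $\Br(\CX)(\ell)$ is finite if and only if both $\Sha(\Pic^0_{X/K})(\ell)$ and $\Br(X_{\olsi K})^{G_K}(\ell)$ are finite.

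Finally, by Proposition \ref{prop2.1}, the finiteness of $\Br(X_{\olsi K})^{G_K}(\ell)$ is equivalent to the Tate conjecture $T^1(X,\ell)$. Substituting this equivalence yields the corollary. There is no real obstacle here beyond checking that ``exact up to finite groups'' is stable under the exact functor $(-)(\ell)$ and that the two-out-of-three principle applies; both points follow immediately from the definitions, so the corollary is essentially a formal consequence of Theorem \ref{mainthm} and Proposition \ref{prop2.1}.
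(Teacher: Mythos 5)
Your derivation is correct and is essentially the paper's own route: the theorem proved in Section 3 establishes precisely the $\ell$-primary sequence $0\to \Sha(\Pic^0_{X/K})(\ell)\to \Br(\CX)(\ell)\to \Br(X_{\olsi{K}})^{G_K}(\ell)\to 0$ exact up to finite groups, from which the corollary follows by your two-out-of-three argument combined with Proposition \ref{prop2.1}. One small imprecision: your claim that taking $\ell$-primary parts is exact on all abelian groups is false in general (it is only left exact, as $0\to\ZZ\stackrel{\ell}{\to}\ZZ\to\ZZ/\ell\to 0$ shows), but it is exact on torsion abelian groups, which suffices here since $\Br(\CX)$, $\Br(X_{\olsi{K}})^{G_K}$ and $\Sha(\Pic^0_{X/K})$ are all torsion.
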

The statement for fibrations over finite fields was proved by Geisser \cite{Gei2}.
For a smooth projective geometrically connected variety $X$ over a finitely generated field $k$ of characteristic $0$, Orr and Skorobogatov \cite[Thm. 5.1]{OS} proved that
$\Br(X_{k^s})^{G_k}$ is finite under the assumption of the integral Mumford-Tate conjecture. Thus, Theorem \ref{mainthm} implies the following result concerning Artin's question.
\begin{cor}
Assuming the integral Mumford-Tate conjecture and the Tate-Shafarevich conjecture, then $\Br(\CX)$ is finite for all proper regular schemes over $\Spec \ZZ$.
\end{cor}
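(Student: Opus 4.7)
My plan is to apply Theorem \ref{mainthm}, whose exact sequence (up to finite groups)
$$
0\longrightarrow \Sha(\Pic_{X/K}^0)\longrightarrow \Br(\CX) \longrightarrow \Br(X_{\olsi K})^{G_K}\lra 0
$$
reduces finiteness of $\Br(\CX)$ to finiteness of the two outer terms, both of which are invariants of the generic fiber alone.

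First I would reduce to the case where $\CX$ is integral: because $\CX$ is regular it splits as a finite disjoint union of its connected components, each regular integral and proper over $\Spec\ZZ$, and $\Br(\CX)$ factors as the product of the component Brauer groups. Set $C:=\Spec\Gamma(\CX,\CO_\CX)$; properness of $\CX/\ZZ$ together with normality of $\CX$ forces $\Gamma(\CX,\CO_\CX)=\CO_K$ for some number field $K$, and the Stein factorization gives a proper flat morphism $\pi:\CX\to C$ with geometrically connected fibers, whose generic fiber $X/K$ is smooth and proper (smooth because $K$ has characteristic zero and $\CX$ is regular). To feed this into Theorem \ref{mainthm} I need the generic fiber to be projective, so I would invoke Hironaka to choose a smooth projective $X'/K$ birational to $X$. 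The Brauer group of a smooth proper variety over a characteristic-zero field is a birational invariant, so $\Br(X'_{\olsi K})^{G_K} \cong \Br(X_{\olsi K})^{G_K}$; and $\Pic^0_{X/K} \cong \Pic^0_{X'/K}$ as abelian varieties, hence $\Sha(\Pic^0_{X/K}) \cong \Sha(\Pic^0_{X'/K})$. Then \cite[Thm.~5.1]{OS} makes $\Br(X'_{\olsi K})^{G_K}$ finite under the integral Mumford-Tate conjecture, while the Tate-Shafarevich conjecture makes $\Sha(\Pic^0_{X'/K})$ finite. Theorem \ref{mainthm} then yields finiteness of $\Br(\CX)$.

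The main obstacle is the projective-versus-proper gap: Theorem \ref{mainthm} as stated requires a projective generic fiber, while the corollary posits only properness of $\CX$ over $\Spec\ZZ$. The birational-invariance arguments above take care of the two generic-fiber terms in the exact sequence, but one still must either extend Theorem \ref{mainthm} to the proper smooth case or replace $\CX$ by a regular proper model of $X'$ whose Brauer group agrees with $\Br(\CX)$ up to finite groups. In mixed characteristic resolution of singularities is unavailable for $\CX$ itself, so this passage from $X$ to $X'$ at the integral level is the technically delicate step, and is where I expect the most care to be required; one would likely handle it via a Chow-lemma-plus-alteration construction together with a comparison of Brauer groups over the common generic fiber.
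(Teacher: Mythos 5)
Your proposal takes essentially the same route as the paper: Stein factorization to reduce to $\pi\colon\CX\to\Spec\CO_K$, then Theorem \ref{mainthm} combined with \cite[Thm. 5.1]{OS} (integral Mumford--Tate implies finiteness of $\Br(X'_{\olsi{K}})^{G_K}$ for smooth projective $X'$) and the Tate--Shafarevich conjecture for the two outer terms, with the proper-to-projective passage handled by resolution of singularities in characteristic $0$ and birational invariance of $\Br$ and $\Pic^0$. This last reduction is exactly what the paper does in \S 4 (proof of Theorem 4.2), and the corollary itself is stated there as an immediate consequence of Theorem \ref{mainthm} with no further argument, so your write-up is, if anything, more detailed than the paper's.

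Two caveats. (1) Your claim that properness plus normality forces $\Gamma(\CX,\CO_\CX)=\CO_K$ is false without flatness: a connected component of a proper regular scheme over $\Spec\ZZ$ may be vertical, i.e.\ essentially a smooth proper variety over a finite field, in which case $\Gamma$ is a finite field and finiteness of the Brauer group there is the Tate conjecture, untouched by the stated hypotheses. The corollary implicitly carries the flatness assumption of the surrounding discussion (compare ``regular proper flat'' in Theorem 1.5 and in \S 4), so you should assume flatness rather than derive it. (2) The proper-versus-projective gap you flag at the end is real, but the paper does not close it either: in the proof of Theorem 4.2 it performs your same birational reduction for $\Sha(\Pic^0_{X/K})$ and $\Br(X_{\olsi{K}})^{G_K}$ and then applies Theorem \ref{mainthm} to the given $\CX$, whose generic fiber is only proper. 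The natural repair is not a regular proper model of $X'$ (unavailable in mixed characteristic, as you say), but to check that the proof of Theorem \ref{mainthm} extends to smooth proper generic fibers: the curves $Z_i$ can be produced on a projective modification $X'\to X$ and composed to $C$-morphisms $\CZ_i\to\CX$, where desingularization of the $2$-dimensional closures is unconditional, and the remaining inputs (Proposition \ref{prop2.1}, the Colliot-Th\'el\`ene--Skorobogatov descent theorem, the syntomic argument) are, or extend to, statements about smooth proper varieties. So your proposal matches the paper's argument and is more candid about where the details are thin; only the flatness slip in (1) is an actual error.
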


By a theorem of Andr\'e \cite{And} and a result of Ambrosi \cite[Cor. 1.6.2.1]{Amb}, the finiteness of $\Br(X_{\bar{k}})^{G_k}$ has been reduced to smooth projective surfaces over $k$. Thus, Theorem \ref{mainthm} implies the following :
\begin{thm}
 Assuming that $\Br(\CX)$ is finite for all $3$-dimensional regular proper flat schemes over $\ZZ$, then $\Br(\CX)$ is finite for all regular proper flat schemes over $\ZZ$.
\end{thm}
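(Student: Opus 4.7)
The plan is to apply Theorem~\ref{mainthm} twice: first to reduce the finiteness of $\Br(\CX)$ to that of $\Sha(\Pic^0_{X/K})$ and $\Br(X_{\olsi K})^{G_K}$ for the generic fibre $X$, and then to bring each of these two groups back to the $3$-dimensional hypothesis by applying Theorem~\ref{mainthm} to an auxiliary $3$-fold built out of the generic data. After decomposing into connected components, Stein factorization presents $\CX$ as $\pi:\CX\to\Spec\CO_K$ with generic fibre $X$ smooth projective and geometrically connected over a number field $K$, and Theorem~\ref{mainthm} reduces the problem to the two finiteness assertions (i) $\Sha(\Pic^0_{X/K})$ and (ii) $\Br(X_{\olsi K})^{G_K}$.

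For (ii), the Andr\'e--Ambrosi reduction recalled in the introduction allows me to assume $X$ is a smooth projective geometrically connected surface over $K$. By resolution of singularities for arithmetic $3$-folds (Cossart--Piltant), such an $X$ admits a regular proper flat model $\CY$ over $\Spec\CO_K$; this $\CY$ is $3$-dimensional and proper flat over $\ZZ$, so the hypothesis gives $\Br(\CY)$ finite, and Theorem~\ref{mainthm} applied to $\CY\to\Spec\CO_K$ then yields the finiteness of $\Br(X_{\olsi K})^{G_K}$.

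For (i), set $A:=\Pic^0_{X/K}$. Embedding $A$ projectively via a very ample line bundle and cutting by $\dim A-1$ general hyperplanes (Bertini applies since $K$ is infinite) produces a smooth projective geometrically connected curve $D\subset A$ that generates $A$ as an algebraic group; the Abel--Jacobi construction then gives a surjection $\Jac(D)\twoheadrightarrow A$. Poincar\'e complete reducibility over the perfect field $K$ writes $\Jac(D)$ up to $K$-isogeny as $A\times A'$, so $\Sha(\Jac(D))$ is almost isomorphic to $\Sha(A)\oplus\Sha(A')$, and it suffices to prove $\Sha(\Jac(D))$ is finite. For this I choose a regular proper flat model $\db\to\Spec\CO_K$ of $D$ (an arithmetic surface) and form $\CY:=\db\times_{\Spec\CO_K}\PP^1_{\Spec\CO_K}$. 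Since $\PP^1$ is smooth over $\Spec\CO_K$, the projection $\CY\to\db$ is smooth, so $\CY$ is regular; it is $3$-dimensional, proper flat over $\ZZ$, and has generic fibre $D\times_K\PP^1_K$ with $\Pic^0_{D\times\PP^1/K}=\Jac(D)$, because $\PP^1$ has trivial Picard variety. The hypothesis forces $\Br(\CY)$ finite, and a further application of Theorem~\ref{mainthm} to $\CY\to\Spec\CO_K$ produces the finiteness of $\Sha(\Jac(D))$, closing (i).

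The main technical inputs — resolution of singularities in mixed-characteristic dimension $3$, Bertini and the Albanese universal property to manufacture a curve whose Jacobian surjects onto $A$, Poincar\'e complete reducibility, and the regularity of the product $\db\times\PP^1$ — are all classical. The conceptual crux, which I expect to be the only delicate point to verify carefully, is that each of the two finiteness assertions extracted from Theorem~\ref{mainthm} can be encoded in a distinct $3$-dimensional regular proper flat $\ZZ$-scheme (a model of a surface section for (ii), and the product of an arithmetic surface with $\PP^1$ for (i)), after which the dimension-$3$ hypothesis closes everything.
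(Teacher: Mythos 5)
Your proposal is correct in substance and follows the paper's global strategy --- encode the generic-fibre data in a $3$-dimensional regular proper flat $\ZZ$-scheme and let Theorem~\ref{mainthm} convert the dimension-$3$ hypothesis into the finiteness of $\Sha(\Pic^0_{X/K})$ and $\Br(X_{\olsi{K}})^{G_K}$ --- but it diverges at the key technical step. Where you invoke Cossart--Piltant resolution of arithmetical threefolds to give the surface $X$ itself a regular proper flat model $\CY$ (so that a single application of Theorem~\ref{mainthm} to $\CY$ yields both finiteness statements for $X$ at once), the paper avoids mixed-characteristic resolution entirely: it uses de Jong's alteration theorem \cite[Cor. 5.1]{deJ1} to produce an alteration $X'\lra X$ that admits a regular proper flat model, applies the hypothesis and Theorem~\ref{mainthm} to that model, and then transfers finiteness back along the alteration --- by a restriction--corestriction argument on the function fields $K(X_{\olsi{K}})\subset K(X'_{\olsi{K}})$ to show that $\Br(X_{\olsi{K}})^{G_K}\lra\Br(X'_{\olsi{K}})^{G_K}$ has finite kernel, and by the injectivity of $H^1(X_{\olsi{K}},\QQ_\ell)\lra H^1(X'_{\olsi{K}},\QQ_\ell)$ to show that $\Pic^0_{X/K}\lra\Pic^0_{X'/K}$ has finite kernel, so that $\Sha$-finiteness descends. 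Your route buys directness (no transfer lemmas, and your $\db\times_{\Spec\CO_K}\PP^1_{\Spec\CO_K}$ device makes explicit the curve-to-surface dimension bump that the paper leaves implicit when it passes from ``$\Sha$ finite for curves'' in the preceding theorem to ``both statements for surfaces'' here) at the cost of a much heavier input (arithmetic resolution in dimension $3$ versus alterations); your treatment of $\Sha$ via a curve $D\subset A=\Pic^0_{X/K}$ with $\Jac(D)\twoheadrightarrow A$ and Poincar\'e reducibility is a legitimate alternative to the paper's reduction by smooth hyperplane sections inside $X$ itself.

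Two small points to patch. First, Stein factorization only makes the generic fibre smooth, proper and geometrically connected --- not projective, as you assert; before Theorem~\ref{mainthm} applies you need the proper-to-projective reduction carried out in the preceding theorem (characteristic-$0$ resolution on the generic fibre, birational invariance of the Brauer group, and isogeny invariance of $\Pic^0$ and of the relevant finiteness statements). Second, when you invoke Cossart--Piltant you should note that the resolution can be taken to be an isomorphism over the regular locus, and that points of the generic fibre of the Zariski closure are regular points of the threefold (their local rings coincide with those of the smooth generic fibre), so that $\CY$ genuinely has generic fibre $X$; otherwise you would face exactly the transfer problem that the paper's alteration argument is designed to solve.
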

\begin{rem}
The finiteness of Brauer groups for smooth proper varieties over finite fields has been reduced to smooth projective surfaces  over finite fields by the work of de Jong \cite{deJ2} and Morrow \cite[Thm. 4.3]{Mor}.
\end{rem}
In the process of proving Theorem \ref{mainthm}, we also obtain some local results.
\subsubsection*{Local results}
Let $R$ denote a Henselian discrete valuation ring (DVR) of characteristic $0$ with a perfect residue field $k$ of characteristic $p>0$. Let $R^{sh}$ be a strict Henselization of $R$. Denote by $K$ (resp. $K^{sh}$) the quotient field of $R$ (resp. $R^{sh}$). Let $I$ denote $\Gal(\bar{K}/K^{sh})$ the inertia group for $K$.
\begin{prop}\label{cor1}
Let $\pi:\CX\lra S=\Spec R$ be a proper flat morphism with $\CX$ regular. Assuming that the residue field of $R$ is finite, the natural map
$$\Br(\CX_{R^{sh}})(p)\lra\Br(X_{\Bar{K}})^{I}(p)$$
has a kernel of finite exponent and a finite cokernel.
\end{prop}

\begin{rem}
In the case that $\CX$ is of dimension $2$, a theorem of Artin (\cite[Thm. 3.1]{Gro3}) implies that $\Br(\CX_{R^{sh}})=0$, and it is well-known that $\Br(X_{\Bar{K}})=0$, since $X/K$ is a smooth proper curve. Hence, the claim is trivial in this case.  For higher relative dimensions, we will use a pullback trick developed by Colliot-Thélène and Skorobogatov (cf. \cite{CTS1} or \cite{Yua} ) to show that the map has a kernel of finite exponent (which can be bounded) by reducing it to the case of relative dimension $1$. The finiteness of the cokernel follows directly from Theorem \ref{main}, which is a 
$p$-adic version of the local invariant cycle theorem. This result can be viewed as a weak generalization of Artin's theorem. An important consequence of this proposition is the following result comparing the Brauer group of the proper regular model $\CX$ to the geometrical Brauer group of $X$, which will play a key role in the proof of Theorem \ref{mainthm}.
\end{rem}
\begin{cor}\label{big}
Let $\pi:\CX\lra S=\Spec R$ be a proper flat morphism with $\CX$ regular. Assuming that the residue field of $R$ is finite, then the natural map
$$\Br(\CX)\lra\Br(X_{\Bar{K}})^{G_K}$$
has a finite kernel and a finite cokernel.
\end{cor}
\begin{rem}
Geisser-Morin \cite{GeMo} compared $\Br(\CX)$ to $\Br(Y)$, where $Y$ is the special fiber of $\pi$.
\end{rem}
Next, we derive some result on Manin's pairing from the Corollary \ref{big}.
Let $X$ be  a proper smooth geometrically connected variety over a $p$-adic local field $K$. Let $\cH_0(X)$ be the Chow group of $0$-cycles on $X$ modulo rational equivalence. Manin \cite{Man} defined a natural pairing 
$$\cH_0(X)\times \Br(X) \lra \Br(K)\cong \QQ/\ZZ.$$
Let $A_0(X)\subset \cH_0(X)$ be the subgroup consisting of cycles of degree $0$. Then there is a homomorphism  
$$ \phi_X: A_0(X) \lra \Alb_X(K) $$
induced by the Albanese map $X\lra \Alb_X$, where $\Alb_X$ denotes the Albanese variety of $X$ and $\Alb_X(K)$ denotes the group of $K$-rational points on $\Alb_X$. Colliot-Thélène \cite{CT} conjectured that the kernel of $\phi_X$ is the direct sum of a finite group and a divisible group. Manin's pairing induces a natural map
$$ A_0(X)\lra \Hom(\Br(X)/\Br(K), \QQ/\ZZ) $$
which was shown to be an isomorphism when $\dim(X)=1$ by Lichtenbaum\cite{Lic}. Assuming that $X$ has a regular model $\CX$ which is proper flat of finite type over the integer ring $R$ of $K$, the natural map above induces a homomorphism 
$$ A_0(X)\lra \Hom(\Br(X)/\Br(K)+\Br(\CX), \QQ/\ZZ) $$
which was shown to be surjective by Saito and Sato \cite[Thm. 1.1.3]{SS1}. The Hochschild–Serre spectral sequence 
$$E_2^{p,q}=H^p(G_K, H^q(X_{\bar{K}},\GG_m))\Rightarrow H^{p+q}(X,\GG_m)$$
gives a long exact sequence
$$ \Br(K)\lra \Ker(\Br(X)\lra\Br(X_{\bar{K}})^{G_K})\lra H^1(K,\Pic(X_{\bar{K}})) \lra H^3(K, \GG_m).$$
By \cite[Chap. II, Prop. 1.5]{Mil1}, $H^3(K,\GG_m)=0$, thus it induces a natural map
$$ H^1(K, \Pic^0_{X/K}) \lra \Br(X)/\Br(K).$$
By taking dual and composing with the natural isomorphism $\Hom(H^1(K,\Pic^0_{X/K}), \QQ/\ZZ)\cong \Alb_X(K)$ induced by the local Tate duality \cite[Chap. I, Cor. 3.4]{Mil1}, we get a natural homomorphism
$$ \psi_X : \Hom(\Br(X)/\Br(K)+\Br(\CX), \QQ/\ZZ) \lra \Alb_X(K).$$
As a result of Corollary \ref{big}, $\psi_X$ has a finite kernel and a finite cokernel.
\begin{thm}\label{0cycle}
There is a commutative diagram
\begin{displaymath}
\xymatrix{ A_0(X)\ar[r] \ar[rd]^{\phi_X} & \Hom(\Br(X)/\Br(K)+\Br(\CX), \QQ/\ZZ) \ar[d]^{\psi_X}\\
& \Alb_X(K)}
\end{displaymath}
and $\psi_X$ has a finite kernel and a finite cokernel. Let $f$ (resp. $g$) denote the natural map $\Br(\CX)\rightarrow \Br(X_{\bar{K}})^{G_K}$ (resp. $\Br(X)\rightarrow \Br(X_{\bar{K}})^{G_K}$) and $F$ denote the cokernel of $\Pic(X_{\bar{K}})^{G_K}\rightarrow \NS(X_{\bar{K}})^{G_K}$. Then there is a long exact sequence of finite groups
$$0\lra F\lra \Coker(\psi_X)^\vee  \lra \Ker(f)/\Br(\CX)\cap \Br(K) \lra H^1(K, \NS(X_{\bar{K}}))\\$$
$$\lra \Ker(\psi_{X})^\vee \lra \Coker(f)\lra \Coker(g) \lra 0.$$
\end{thm}
\begin{rem}\label{rem11}
The horizontal map in the diagram was proved to be surjective by Saito and Sato \cite{SS1}. Thus, $\Coker(\psi_X)=\Coker(\phi_X)$. Kai \cite{Kai} proved the commutativity of the diagram and showed $F\cong \Coker(\psi_X)^\vee$ for specific varieties $X$ satisfying certain conditions. Furthermore, it was shown by Saito and Sato \cite [Cor. 0.4]{SS2} that $A_0(X)$ is $\ell$-divisible for almost all primes $\ell$. As a consequence, $\Hom(\Br(X)/\Br(K)+\Br(\CX), \QQ/\ZZ)$ is $\ell$-divisible for almost all $\ell$. Therefore, the pro-$\ell$-part of $\Hom(\Br(X)/\Br(K)+\Br(\CX), \QQ/\ZZ)$ is actually trivial for almost all $\ell$. By \cite[Cor. 2.6]{CTSa} or \cite[Thm. 0.3]{SS2},  the prime-to-$p$-part of it is finite. Using a similar argument, the finiteness of $\Ker(\psi_X)$ can indeed be derived from a conjecture of Colliot-Thélène's \cite{CT}, namely that $\Ker(\phi_X)$ is the direct sum of a finite group and a divisible group.
\end{rem}

\subsection{Sketch of Proofs }
\subsubsection{Sketch of the proof of Proposition \ref{cor1}}
Proposition \ref{cor1} is a consequence of the following $p$-adic version of the local invariant cycle theorem: the natural map
$$H^2_{\syn}(\mathcal{X}_{R^{sh}},\mathbb{Q}_p(1))\lra H^2_{\et}(X_{\bar{K}},\QQ_p(1))^{I}$$
is surjective. The idea of proving it is to use the semi-stable $p$-adic Hodge comparsion theorem, syntomic cohomology for $\CX$ and weight arguments in Deligne's proof of the local invariant cycle theorem in equal characteristic (cf. \cite[Thm. 3.6.1]{Del2}).

K. Sato defined a $p$-adic etale Tate twist $\mathfrak{T}_n(1)\in D^b(\mathcal{X}_{\et},\mathbb{Z}/p^n\mathbb{Z})$ in \cite[\S1.3]{Sat}, which is naturally isomorphic to $\mathbb{G}_m\otimes^\LL\mathbb{Z}/p^n\mathbb{Z}[-1]$. Thus, there is a canonical isomorphism
 $$ H^2_{\fppf}(\CX_{R^{sh}}, \mu_{p^n})\cong H^2_{\et}(\CX_{R^{sh}},\mathfrak{T}_n(1)).
 $$
 The $p$-adic etale Tate twist $\mathfrak{T}_n(1)$ is related to the truncated nearby cycle by the following distinguished triangle
$$
i_*\nu_{Y,n}^{0}[-2]\longrightarrow \mathfrak{T}_n(1)\longrightarrow \tau_{\leq 1}Rj_*\mathbb{Z}/p^n(1) \longrightarrow
i_*\nu_{Y,n}^{0}[-1],
$$
where $i:Y_{\bar{k}}\lra \CX_{R^{sh}}$ is the special fiber and $\nu_{Y,n}^{0}$ is roughly equal to the constant etale sheaf $\ZZ/p^n\ZZ$ on $Y_{\bar{k}}$ ( equal to $\ZZ/p^n\ZZ$ if $Y$ is smooth). Taking cohomology, we get an exact sequence
$$\varprojlim_n H^2_{\et}(\mathcal{X}_{R^{sh}},\mathfrak{T}_n(1))\otimes_\ZZ \mathbb{Q} \rightarrow \varprojlim_n H^2_{\et}(\mathcal{X}_{R^{sh}},\tau_{\leq 1}Rj_*\mathbb{Z}/p^n(1))\otimes_\ZZ \mathbb{Q}\rightarrow 
\varprojlim_n H^1_{\et}(Y_{\bar{k}},\nu_{Y,n}^0)\otimes_\ZZ \mathbb{Q},
$$
where the third term is roughly equal to $H^1_{\et}(Y_{\bar{k}},\QQ_p)$ which is of weight $\geq 1$. By using the semi-stable p-adic Hodge comparsion theorem, we can show that
$$H^2_{\et}(X_{\bar{K}},\QQ_p(1))^{I}$$
is of weight $\leq 0$. So the question is reduced to show that the natural map
$$\varprojlim_n H^2_{\et}(\mathcal{X}_{R^{sh}},\tau_{\leq 1}Rj_*\mathbb{Z}/p^n(1))\otimes_\ZZ \mathbb{Q}\lra H^2_{\et}(X_{\bar{K}},\QQ_p(1))^{I}$$
is surjective. By a result of Kato and others, there is a canonical isomorphism
$$\mathscr{S}_n(1)_{\mathcal{X}}\cong\tau _{\leq 1}i^*Rj_*\mathbb{Z}/p^n(1),$$
where $\mathscr{S}_n(1)_{\mathcal{X}}$ is the (log) syntomic sheaf modulo $p^n$ on $Y_{\et}$. Thus, the question is reduced to show that the natural map
$$H^2_{\syn}(\mathcal{X}_{R^{sh}},\mathbb{Q}_p(1))\lra H^2_{\et}(X_{\bar{K}},\QQ_p(1))^{I}$$
is surjective. This follows from the degeneracy at the $E_2$ page of a syntomic descent spectral sequence(cf.\cite[Thm. A(5)]{NN} and \cite{DN})).

\subsubsection{The idea of the proof of Theorem \ref{mainthm}}
The left exactness of the sequences can be proved by combining Grothendieck's argument in \cite[\S 4]{Gro3} with a pullback technique developed by Colliot-Thélène and Skorobogatov in \cite{CTS1}. The tricky part is the  finiteness of the cokernel of the natural map
$$\Br(\mathcal{X}) \longrightarrow  \Br(X_{\olsi{K}})^{G_K}.$$
Colliot-Thélène \cite[Conj. 1.5(d)]{CT} conjectured that the cokernel of the natural map 
$$\prod_v A_0(X_{K_v})\lra \Hom(\Br(X)/\Br(K),\QQ/\ZZ )$$
induced by the Brauer-Manin pairing, is a finite group that can be identified with the group
$$\Hom(\Br^\prime(\CX)/\Br(\CO_K),\QQ/\ZZ),$$
where $\Br^\prime(\CX)\subset\Br(\CX)$ is the subgroup of elements vanishing on $X(K_v)$ for all real places of $K$. The dual statement of the conjecture predicts that the natural map 
$$\Br(X)/(\Br(\CX)+\Br(K)) \lra \prod_{v\in C^{\circ}}\Br(X_{K_v})/(\Br(\CX_{\CO_v})+\Br(K_v))$$
has a finite kernel, where $C^{\circ}$ denotes the set of finite places. In conjuction with a theorem of Colliot-Thélène and Skorobogatov \cite{CTS1}, which states that the natural map $\Br(X)\lra\Br(X_{\bar{K}})^{G_K}$ has a finite cokernel, it indicates that the natural map
$$\Br(X_{\olsi{K}})^{G_K}/\Br(\CX) \lra \prod_{v} \Br(X_{\olsi{K_v}})^{G_{K_v}}/\Br(\CX_{\CO_v})$$
has a finite kernel. We will prove this by using the pullback trick of Colliot-Thélène and Skorobogatov. By Corollary \ref{big}, the group $\Br(X_{\olsi{K_v}})^{G_{K_v}}/\Br(\CX_{\CO_v})$ is finite for all finite places $v$. This will imply that the divisible part of $\Br(X_{\olsi{K}})^{G_K}/\Br(\CX)$ is trivial. Consequently, $(\Br(X_{\olsi{K}})^{G_K}/\Br(\CX))(\ell)$ is finite for all prime $\ell$. By using the Fontaine-Laffaille-Messing theory( cf. \cite[\S 3]{BM} ), we will show that for sufficiently large $\ell$, the group $(\Br(X_{\olsi{K_v}})^{G_{K_v}}/\Br(\CX_{\CO_v}))(\ell)$ is trivial for all $v$ such that $\CX_{\CO_v}$ is smooth over $\CO_v$. This will imply that $(\Br(X_{\olsi{K}})^{G_K}/\Br(\CX))(\ell)$ is trivial for $\ell\gg0$.

\subsection{Notation and Terminology}
\subsubsection*{Fields}
By a \emph{finitely generated field}, we mean a field which is finitely generated over a prime field.\\
For any field $k$, denote by $k^s$ the separable closure. Denote by $G_k=\Gal(k^s/k)$ the absolute Galois group of $k$.
\subsubsection*{Henselization}
Let $R$ be a noetherian local ring, denote by $R^h$ (resp. $R^{sh}$) the Henselization (resp. strict Henselization) of $R$ at the maximal ideal. If $R$
is a domain, denote by $K^h$ (resp. $K^{sh}$) the fraction field of $R^{h}$ (resp. $R^{sh}$). 
\subsubsection*{Varieties}
By a \emph{variety} over a field $k$, we mean a scheme which is separated and of finite type over $k$. For a smooth proper geometrically connected variety $X$ over a field $k$, we use $\Pic^0_{X/k}$ to denote the identity component of the Picard scheme $\Pic_{X/k}$.
\subsubsection*{Cohomology}
The default sheaves and cohomology over schemes are with respect to the
small \'etale site. So $H^i$ is the abbreviation of $H_{\et}^i$. Denote by $H^i_{\fppf}$ the cohomology with respect to the fppf site.
\subsubsection*{Brauer groups}
For any noetherian scheme $X$, denote the \emph{cohomological Brauer group}
$$
\Br(X):=H^2(X,\mathbb{G}_m)_{\tor}.
$$
\subsubsection*{Tate-Shafarevich group}
For an abelian variety $A$ defined over a global field $K$, denote the  \emph{Tate–Shafarevich group}

$$
\Sha(A):=\Ker(H^1(K, A)\longrightarrow \prod _v H^1(K_v, A)),
$$
where $K_v$ denotes the completion of $K$ at a place $v$. Note that if $v$ is a finite place, the natural map $H^1(K_v^h, A) \rightarrow H^1(K,A)$
is an isomorphism (cf. \cite[Chap. II, Rem. 3.10]{Mil1}), where $K_v^h$ denotes the fraction field of the Henselian local ring at $v$.
\subsubsection*{Abelian group}
For any abelian group $M$, integer $m$ and prime $\ell$, we set\\
$$M[m]=\{x\in M| mx=0\},\quad M_{\tor}=\bigcup\limits_{m\geq 1}M[m],\quad  M(\ell)=\bigcup\limits_{n\geq 1}M[\ell^n], $$
$$ M(\non \ell)=\bigcup\limits_{m\geq 1, \ell\nmid m}M[m],\ T_\ell M=\Hom_\mathbb{Z}(\mathbb{Q}_\ell/\mathbb{Z}_\ell, M)=\lim \limits_{\substack{\leftarrow \\ n}}M[\ell^n],\ V_\ell M= T_\ell(M)\otimes_{\mathbb{Z}_\ell}\mathbb{Q}_\ell.$$
A torsion abelian group $M$ is \emph{of cofinite type} if $M[\ell]$ is finite for all prime $\ell$.


\bigskip
\bigskip
\noindent {\bf Acknowledgements}:

\noindent I would like to thank my advisor Xinyi Yuan for suggesting this question and introducing me to the pullback trick.  I also wish to thank Ruochuan Liu, Wiesława Nizioł, Thomas Geisser, Veronika Ertl and Zhenghui Li for helpful suggestions and comments.

\section{Preliminary}
In this section, we will review some well-established facts that have either appeared in the literature or are familiar to experts.
\subsection*{Basic exact sequences}
\begin{prop}\label{prop2.1}
Let $X$ be a smooth projective geometrically connected variety over a field $k$. Define $\NS(X)$ to be the image of the map $\Pic(X)\rightarrow \NS(X_{k^s})$. Let $\ell$ be a prime different from char$(k)$, then we have
\begin{itemize}
\item[(a)]
For any $\ell\neq \Char(k)$, the exact sequence of $G_k$-representations
$$
0\longrightarrow \NS(X_{k^s})\otimes_\ZZ\mathbb{Q}_\ell\longrightarrow H^2(X_{k^s},\mathbb{Q}_\ell(1)) \longrightarrow V_\ell \Br(X_{k^s})\longrightarrow 0
$$
is split. Taking $G_k$-invariant, there is an exact sequence
$$
0\longrightarrow \NS(X)\otimes_\ZZ\mathbb{Q}_\ell\longrightarrow H^2(X_{k^s},\mathbb{Q}_\ell(1))^{G_k} \longrightarrow V_\ell \Br(X_{k^s})^{G_k}\longrightarrow 0.
$$
\item[(b)]
For sufficiently large $\ell$, the exact sequence of $G_k$-modules
$$
0\longrightarrow \NS(X_{k^s})/\ell^n\longrightarrow H^2(X_{k^s},\ZZ/\ell^n(1))\longrightarrow \Br(X_{k^s})[\ell^n]\longrightarrow 0
$$ 
is split for any $n\geq 1$. Moreover, for all but finitely many $\ell$, there is an exact sequence
$$
0\longrightarrow \NS(X)\otimes_\ZZ\mathbb{Q}_\ell/\ZZ_\ell\longrightarrow H^2(X_{k^s},\QQ_\ell/\ZZ_\ell(1))^{G_k} \longrightarrow \Br(X_{k^s})^{G_k}(\ell)\longrightarrow 0.
$$
\end{itemize}
\end{prop}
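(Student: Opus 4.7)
The plan is to derive both exact sequences from the Kummer sequence on $X_{k^s}$ and to construct a canonical $G_k$-equivariant splitting using cup product with an ample class. First I would apply the Kummer sequence $0 \to \mu_{\ell^n} \to \GG_m \to \GG_m \to 0$ on $X_{k^s}$ and take the long exact sequence in \'etale cohomology. Since $k^s$ is algebraically closed and $X_{k^s}$ is smooth projective, $\Pic^0(X_{k^s})$ is $\ell$-divisible (as the $k^s$-points of an abelian variety), so $\Pic(X_{k^s})/\ell^n \cong \NS(X_{k^s})/\ell^n$ and we obtain
$$0 \to \NS(X_{k^s})/\ell^n \to H^2(X_{k^s}, \mu_{\ell^n}) \to \Br(X_{k^s})[\ell^n] \to 0$$
as $G_k$-modules. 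Mittag--Leffler holds on the left since $\NS(X_{k^s})$ is finitely generated, so the inverse limit followed by $\otimes_{\ZZ_\ell} \QQ_\ell$ produces the first sequence of (a), while the direct limit over $n$ produces the underlying sequence used in (b).

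For the splitting in (a), I would fix an ample line bundle on $X$ (defined over $k$) and let $\xi \in H^2(X_{k^s}, \QQ_\ell(1))$ be its first Chern class, which is $G_k$-invariant. Hard Lefschetz in $\ell$-adic cohomology gives, with $d = \dim X$, a $G_k$-equivariant isomorphism from $H^2(X_{k^s}, \QQ_\ell(1))$ to $H^{2d-2}(X_{k^s}, \QQ_\ell(d-1))$ via cup product with $\xi^{d-2}$, and composing with Poincar\'e duality yields a $G_k$-equivariant non-degenerate symmetric pairing $\langle\cdot,\cdot\rangle$ on $H^2(X_{k^s}, \QQ_\ell(1))$. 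Its restriction to $\NS(X_{k^s}) \otimes \QQ_\ell$ is the classical intersection pairing relative to $\xi^{d-2}$, which is non-degenerate (numerically trivial divisors are torsion). The orthogonal complement of $\NS(X_{k^s})\otimes \QQ_\ell$ is then a $G_k$-stable complement, giving the desired splitting. Taking $G_k$-invariants preserves exactness; to identify the leftmost term with $\NS(X) \otimes \QQ_\ell$ I use that $G_k$ acts on the finitely generated group $\NS(X_{k^s})$ through a finite quotient (so $G_k$-invariance commutes with $\otimes \QQ_\ell$), together with the fact that $\NS(X_{k^s})^{G_k}/\NS(X)$ is torsion, being a subquotient of terms appearing in the Hochschild--Serre spectral sequence for $X_{k^s} \to X$ (namely $\Br(k)$ and $H^1(G_k, \Pic^0(X_{k^s}))$).

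For (b), the same construction upgrades to $\ZZ_\ell$-coefficients for all but finitely many $\ell$. The excluded primes are those for which one of the following fails: $\NS(X_{k^s})$ is $\ell$-torsion-free; $H^i(X_{k^s}, \ZZ_\ell)$ is torsion-free for $i \leq 2d$ (which excludes only finitely many $\ell$ by Gabber); Poincar\'e duality with $\ZZ_\ell$-coefficients is perfect; the hard Lefschetz map $\xi^{d-2}\cup (\cdot)$ has determinant a unit in $\ZZ_\ell$; and the discriminant of the intersection form on $\NS(X_{k^s})/\tor$ paired via $\xi^{d-2}$ is a unit. Under these conditions, $\NS(X_{k^s}) \otimes \ZZ_\ell$ sits in $H^2(X_{k^s}, \ZZ_\ell(1))$ as a primitive sublattice on which $\langle\cdot,\cdot\rangle$ is perfect, so its orthogonal complement is a $G_k$-stable direct summand whose reduction modulo $\ell^n$ splits the first sequence of (b). Splitting kills the connecting map in the $G_k$-cohomology of this short exact sequence, so $G_k$-invariants yield short exactness at finite level; passing to the direct limit over $n$ and again identifying $(\NS(X_{k^s}) \otimes \QQ_\ell/\ZZ_\ell)^{G_k}$ with $\NS(X) \otimes \QQ_\ell/\ZZ_\ell$ for $\ell \gg 0$ (since the torsion obstructions in $\NS(X_{k^s})^{G_k}/\NS(X)$ involve only finitely many primes) yields the second sequence of (b).

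The most delicate step, I expect, is the integral bookkeeping: explicitly isolating the finite set of primes for which the construction fails. This combines Gabber's torsion theorem, integral Poincar\'e duality, a discriminant computation for the Lefschetz-modified intersection form on $\NS(X_{k^s})$, and a uniform bound on the $\ell$-torsion of $\NS(X_{k^s})^{G_k}/\NS(X)$. Each ingredient is essentially known, but certifying that together they admit a common bound on the excluded primes is the main technical hurdle.
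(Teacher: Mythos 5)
Your proposal is correct and takes essentially the same approach as the paper: the paper delegates part (a) and the splitting in the first claim of (b) to \cite[Prop. 2.1]{Qin2}, whose content is precisely your Kummer-sequence plus orthogonal-complement construction (cup product with $\xi^{d-2}$, Poincar\'e duality, Gabber's torsion theorem, and the discriminant of the intersection form on $\NS(X_{k^s})$ modulo torsion controlling the finitely many excluded primes), while the step the paper does prove in detail --- identifying $\NS(X)\otimes_\ZZ\QQ_\ell/\ZZ_\ell$ with $(\NS(X_{k^s})\otimes_\ZZ\QQ_\ell/\ZZ_\ell)^{G_k}$ for all but finitely many $\ell$ via the finite-quotient Galois action and the finite index of $\NS(X)$ in $\NS(X_{k^s})^{G_k}$, citing \cite[\S 2.2]{Yua} --- is exactly your Hochschild--Serre argument. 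The one justification to tighten is the nondegeneracy of the $\xi^{d-2}$-pairing on $\NS(X_{k^s})\otimes_\ZZ\QQ$: a class orthogonal to $\NS(X_{k^s})\cdot\xi^{d-2}$ is not a priori numerically trivial, so in addition to Matsusaka's theorem (numerically trivial divisors are torsion) you should invoke the Hodge index theorem in the form that this pairing has signature $(1,\rho-1)$, which holds in every characteristic.
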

\begin{proof}
We refer the proof of $(a)$ and the first claim of $(b)$ to \cite[Prop. 2.1]{Qin2}. Taking $G_k$-invariants, we get an exact sequence for all but finitely many $\ell$
$$
0\longrightarrow (\NS(X_{k^s})/\ell^n)^{G_k}\longrightarrow H^2(X_{k^s},\ZZ/\ell^n(1))^{G_k}\longrightarrow \Br(X_{k^s})^{G_k}[\ell^n]\longrightarrow 0.
$$ 
Taking direct limit, we get
$$
0\longrightarrow (\NS(X_{k^s})\otimes_{\ZZ}\mathbb{Q}_\ell/\ZZ_\ell)^{G_k}\longrightarrow H^2(X_{k^s},\QQ_\ell/\ZZ_\ell(1))^{G_k} \longrightarrow \Br(X_{k^s})^{G_k}(\ell)\longrightarrow 0.
$$
To prove the second claim of $(b)$, it suffices to show that the natural map
$$\NS(X)\otimes_\ZZ\mathbb{Q}_\ell/\ZZ_\ell\lra (\NS(X_{k^s})\otimes_\ZZ \QQ_\ell /\ZZ_\ell)^{G_k}$$
is an isomorphism for all but finitely many $\ell$. Consider the exact sequence
$$0\lra\NS(X)\otimes_{\ZZ}\ZZ_\ell \lra \NS(X_{k^s})\otimes _\ZZ\ZZ_\ell\lra \NS(X_{k^s})/\NS(X)\otimes_{\ZZ}\ZZ_\ell\lra 0.$$
For $\ell$ sufficiently large, the last group will be a free $\ZZ_\ell$-module. Thus, it is split for all but finitely many $\ell$. So tensoring $\QQ_\ell/\ZZ_\ell$, the sequence is still exact. This proves injectivity.
Since the $G_k$ action on $\NS(X_{k^s})$ factors through a finite group $G$, thus, for $\ell$ prime to $|G|$, any element in the $(\NS(X_{k^s})\otimes_\ZZ \QQ_\ell /\ZZ_\ell)^{G_k}$ can be written as $\sum_{g\in G} gx$. This implies that the natural map
$$\NS(X_{k^s})^{G_k}\otimes_\ZZ \QQ_\ell /\ZZ_\ell\lra (\NS(X_{k^s})\otimes_\ZZ \QQ_\ell /\ZZ_\ell)^{G_k}$$
is surjective. Thus, to prove the surjectivity, it suffices to show that
$$ \NS(X)\otimes_\ZZ \QQ_\ell \lra \NS(X_{k^s})^{G_k}\otimes_\ZZ \QQ_\ell $$
is surjective. This was proved in \cite[\S2.2]{Yua}.
\end{proof}
\subsection*{Cofiniteness}
\begin{prop}\label{cofin}
Let $\CX$ be an integral regular scheme flat and of finite type over $\Spec(\ZZ)$. Then $\Br(\CX)$ is of cofinite type. 
\end{prop}
\begin{proof}
 Let $\ell$ be a prime. By shrinking $\CX$, we may assume that $\CX$ is a $\ZZ[1/\ell]$-scheme. Let $\pi:\CX \lra S=\Spec(\ZZ[1/\ell])$ the structure morphism.
 By the Kummer exact sequence, there is a surjective map
 $$H^2(\CX, \mu_\ell)\lra \Br(\CX)[\ell].$$
 Thus, it suffices to show that $H^2(\CX, \mu_\ell)$ is finite. By the Leray spectral sequence
 $$H^p(S,R^q\pi_*\mu_\ell)\Rightarrow H^{p+q}(\CX,\mu_\ell),$$
 it suffices to show that $H^p(S,R^q\pi_*\mu_\ell)$ is finite. By \cite[Thm. 9.5.1]{Fu}, $R^q\pi_*\mu_\ell$ is a constructible $\ell$-torsion sheaf.
 By \cite[ Chap. II, Thm. 2.13]{Mil1}, $H^p(S,R^q\pi_*\mu_\ell)$ is finite.
\end{proof}
\begin{prop}[Prop. 5.4 in \cite{Gei3}]\label{twosha}
Let $X$ be a smooth projective geometrically connected variety over a number field $K$. Define
$$\Sha(\Pic_{X/K}):=\Ker(H^1(K, \Pic_{X/K}) \lra \prod_{v}H^1(K_v^h, \Pic_{X/K})).$$
Then the natural map
$$\Sha(\Pic^0_{X/K})\lra \Sha(\Pic_{X/K})$$
has a finite kernel and a cokernel of finite exponent.
\end{prop}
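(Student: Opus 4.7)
The plan is to exploit the short exact sequence of $K$-group schemes
\[
0 \lra \Pic^0_{X/K} \lra \Pic_{X/K} \lra \NS_{X/K} \lra 0
\]
and the induced commutative ladder of long exact Galois cohomology sequences over $K$ and over each completion, using $K_v^h$ at non-archimedean places and $K_v$ at the finitely many archimedean places. The map $\Sha(\Pic^0_{X/K}) \to \Sha(\Pic_{X/K})$ is well defined because $H^1(K_v^h, A) \cong H^1(K_v, A)$ for the abelian variety $A = \Pic^0_{X/K}$ at non-archimedean $v$.

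For the kernel, I would observe that any class killed by the map lies in the image of the connecting homomorphism $\delta \colon H^0(K, \NS_{X/K}) = \NS(X_{\olsi{K}})^{G_K} \to H^1(K, \Pic^0_{X/K})$. Since $\NS(X_{\olsi{K}})$ is a finitely generated abelian group and $\Sha(\Pic^0_{X/K})$ is torsion, the kernel is the torsion subgroup of a quotient of a finitely generated abelian group, and is therefore finite.

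For the cokernel, I plan first to pass to
\[
\Sha^1(K, \NS_{X/K}) := \Ker\bigl(H^1(K, \NS_{X/K}) \to \textstyle\prod_v H^1(K_v^h, \NS_{X/K})\bigr),
\]
into which the image of $\Sha(\Pic_{X/K})$ lands. Setting $M = \NS(X_{\olsi{K}})$ and using the $G_K$-equivariant sequence $0 \to M_{\tor} \to M \to M/M_{\tor} \to 0$, inflation-restriction shows $H^1(K, M/M_{\tor})$ is finite (the action factors through a finite quotient $\Gal(L/K)$, and $\Hom_{\mathrm{cont}}(G_L, \ZZ^r) = 0$), while $\Sha^1(K, M_{\tor})$ is finite by Poitou--Tate for the finite Galois module $M_{\tor}$; hence $\Sha^1(K, \NS_{X/K})$ is finite.

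It remains to control the kernel of $\Sha(\Pic_{X/K}) \to \Sha^1(K, \NS_{X/K})$ modulo the image of $\Sha(\Pic^0_{X/K})$. A diagram chase identifies this quotient with a subgroup of $\bigoplus_v U_v$ modulo the diagonal image of $U := \mathrm{im}(\delta)$, where $U_v := \mathrm{im}(\delta_v)$ at each completion; each $U_v$ is a torsion quotient of the finitely generated group $H^0(K_v^h, \NS_{X/K})$, and $U$ is itself finite. The decisive input will be that $U_v = 0$ for all but finitely many $v$: at a place $v$ of good reduction (both for $X$ and for the étale sheaf $\NS_{X/K}$), $\Pic^0_{X/K}$ extends to an abelian scheme $\mathcal{A}_v$ over $\CO_v^h$, and $\delta_v$ factors through $H^1_{\et}(\CO_v^h, \mathcal{A}_v)$ via the smooth proper integral model of $X$; then by henselian base change for smooth group schemes together with Lang's theorem over the finite residue field $k_v$,
\[
H^1_{\et}(\CO_v^h, \mathcal{A}_v) \cong H^1(k_v, \mathcal{A}_{v, k_v}) = 0.
\]
Thus $U_v = 0$ away from a finite bad set, and the cokernel is in fact finite (\emph{a fortiori} of finite exponent). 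The hard part will be making this factorization rigorous: specifically, verifying representability of the integral Picard scheme at almost all $v$ and the identification $\mathcal{N}_v(\CO_v^h) = H^0(K_v^h, \NS_{X/K})$ at the places $v$ where $X$ has good reduction and $\NS_{X/K}$ is unramified.
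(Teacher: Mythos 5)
Your architecture is sound, and it genuinely differs from the paper's proof on both halves of the statement. For the kernel, you observe that it is contained in the image of the global connecting map $\delta\colon H^0(K,\NS(X_{\bar K}))\to H^1(K,\Pic^0_{X/K})$, a finitely generated torsion group, hence finite; the paper instead deduces finiteness from a finite-exponent bound together with the cofiniteness of $\Sha(\Pic^0_{X/K})$, so your argument here is, if anything, cleaner. For the cokernel the two routes diverge substantially: the paper never proves local vanishing of the connecting images $U_v$. It chooses a finite Galois extension $L/K$ with $\Pic(X_L)$ surjecting onto $\NS(X_{\bar K})$, notes that the local connecting map $\partial_w$ vanishes over $L_w^h$, and concludes by inflation--restriction that every $U_v$ is killed by $[L:K]$ \emph{uniformly in} $v$; combined with the finite exponent of all of $H^1(K,\NS(X_{\bar K}))$ (inflation--restriction again, rather than Poitou--Tate for $\Sha^1$), this gives exactly the finite-exponent cokernel of the statement, with no integral models, no representability questions, and no Lang's theorem. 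Your route --- finiteness of $\Sha^1(K,\NS(X_{\bar K}))$ via Poitou--Tate for the finite part, plus $U_v=0$ for almost all $v$ --- buys a strictly stronger conclusion, namely an actually finite cokernel, at the cost of the good-reduction analysis you flag.

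The ``hard part'' you flag is a real gap as written, but it is closable, and you can in fact bypass representability of the integral Picard functor entirely by recycling the paper's device: fix $L/K$ finite Galois with $\Pic(X_L)\lra\NS(X_{\bar K})$ surjective. At any finite place $v$ unramified in $L$ and of good reduction for $A=\Pic^0_{X/K}$, one has $L_w^h\subseteq K_v^{sh}$, so $\Pic(X_{K_v^{sh}})$ (which equals $\Pic_{X/K}(K_v^{sh})$ since $\Br(K_v^{sh})=0$) surjects onto $\NS(X_{\bar K})=\NS(X_{\bar K})^{I_v}$; hence the connecting map vanishes over $K_v^{sh}$, i.e.\ every class $\delta_v(\lambda)$ is unramified, and $H^1(\Gal(K_v^{sh}/K_v^h),A(K_v^{sh}))=0$ for good reduction by Lang's theorem (together with the cohomological triviality of the formal-group filtration on $\mathcal{A}(\CO_v^{sh})$). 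This yields $U_v=0$ for almost all $v$ without spreading out the Picard scheme or identifying $\mathcal{N}_v(\CO_v^h)$ with $H^0(K_v^h,\NS_{X/K})$. Two small corrections: before you know almost-all vanishing, the target of your diagram chase must be $\prod_v U_v$ rather than $\bigoplus_v U_v$ (there is no a priori reason the local components vanish at almost all $v$); and Poitou--Tate is stated over completions, which is harmless because $G_{K_v^h}=G_{K_v}$ at finite places and the archimedean contributions are finite.
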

\subsection*{Colliot-Th\'el\`ene and Skorobogatov's pullback trick}
In this section, we will recall a pullback trick developed by Colliot-Th\'el\`ene and Skorobogatov in their paper \cite{CTS1}. This pullback trick plays an essential role in the proof of our main theorems.
\begin{lem}\label{thekeytrick}
Let $X$ be a smooth projective geometrically connected variety over a field $k$. 
\begin{itemize}
    \item[(i)] There exists a finite separable extension $k^\prime/k$ and smooth projective geometrically connected curves $C_1,...,C_m\subset X_{k^\prime}$ and an abelian subvariety $B\subset\prod_{i=1}^{m}\Pic^0_{C_i/k^\prime}$ such that the induced morphism of $k^s$-points
$$
\Pic(X_{k^s})\times B(k^s) \lra \bigoplus_{i=1}^m \Pic(C_{i,k^s})
$$
has a kernel and a cokernel of finite exponent.
\item[(ii)] There exist smooth projective integral curves $C_1,...,C_m\subset X$ over $k$ and a continuous $G_k$-module $B$ with a $G_k$-equivariant map $B\ra\oplus_{i=1}^{m}\Pic(C_{i,k^s})$ such that the induced $G_k$-morphism 
$$
\Pic(X_{k^s})\times B \lra \bigoplus_{i=1}^m \Pic(C_{i,k^s})
$$
has a kernel and a cokernel of finite exponent.
\end{itemize}
\end{lem}
\begin{proof}
For the proof of (i), see \cite[p. 17]{Yua}. Let $C_1,...,C_m$ be curves as in (i). As a $k$-scheme, $C_i$ is a smooth projective integral curve over $k$. The induced morphism
$$
\Pic^0_{X/k,\red}\lra\prod_{i=1}^{m}\Pic^0_{C_i/k}
$$
has a finite kernel. By \cite[p. 17]{Yua}, there exists a morphism of abelian varieties $B^\prime \lra\prod_{i=1}^{m}\Pic^0_{C_i/k}$ such
that the induced morphism
$$\Pic^0_{X/k,\red}\times B^\prime \lra \prod_{i=1}^{m}\Pic^0_{C_i/k}$$
is an isogeny. It follows that the natural map
$$
\Pic(X_{k^s})\times B^\prime(k^s) \lra \bigoplus_{i=1}^m \Pic(C_{i,k^s})
$$
has a kernel of finite exponent and a cokernel $M$ of finite dimension after tensoring with $\QQ$. There exists a $G_k$-submodule $N\subset \bigoplus_{i=1}^m\Pic(C_{i,k^s})$ such that $N_\QQ\stackrel{\thicksim}{\ra}M_\QQ$. We may assume that $N$ is finitely generated over $\ZZ$. It suffices to show that 
$$\Pic(X_{k^s})\oplus B^\prime(k^s) \oplus N \lra \bigoplus_{i=1}^m \Pic(C_{i,k^s})
$$
has a kernel and a cokernel of finite exponent. It is easy to see that the induced map
$$\NS(X_{k^s})_\QQ\times N_\QQ \lra (\bigoplus_{i=1}^m \NS(C_{i,k^s}))_\QQ$$
is surjective. So 
$$\NS(X_{k^s})\times N \lra \bigoplus_{i=1}^m \NS(C_{i,k^s})$$
has a finite cokernel. This implies that
$$\Pic(X_{k^s})\oplus B^\prime(k^s) \oplus N \lra \bigoplus_{i=1}^m \Pic(C_{i,k^s})
$$
has a cokernel of finite exponent. Let $(a,b,c)$ be an element in the kernel. Since $N_\QQ\stackrel{\thicksim}{\ra}M_\QQ$, $c\in N_\tor$. Since $|N_\tor|$ is finite, $|N_\tor|(a,b,c)$ lies in the kernel of 
$$
\Pic(X_{k^s})\times B^\prime(k^s) \lra \bigoplus_{i=1}^m \Pic(C_{i,k^s}),
$$
which is of finite exponent.
\end{proof}
\section{Local results}
\subsection{Local invariant cycle theorems}
\begin{conj}\label{conj0}
Let $\pi:\CX\lra S=\Spec R$ be a proper flat morphism with $\CX$ regular. Assume that the residue field of $R$ is finite of characteristic $p>0$. Let $X$ (resp. $Y$ ) denote the generic (resp. special) fiber of $\pi$ and $\ell \neq p$ be a prime. Then the natural map
$$H^i_{\et}(Y_{\bar{k}}, \QQ_\ell) \lra H^i_{\et}(X_{\bar{K}}, \QQ_\ell)^{I} $$
is surjective for all $i\geq 0$.
\end{conj}
The conjecture is known as \emph{the local invariant cycle conjecture} in mixed characteristic and was established for $i\leq 2$ as a consequence of the weight monodromy conjecture, which was proven for $\dim X\leq 2$ (cf. \cite{RZ} and \cite[\S 10]{FlMo}). By the proper base change theorem, $H^i_{\et}(Y_{\bar{k}}, \QQ_\ell)$ can be identified with $H^i_{\et}(\CX_{R^{sh}}, \QQ_\ell)$, the conjecture is equivalent to the surjectivity of 
$$H^i_{\et}(\CX_{R^{sh}}, \QQ_\ell) \lra H^i_{\et}(X_{\bar{K}}, \QQ_\ell)^{I}.$$
In this section, we will prove a $p$-adic analogue of the above conjecture for $i=2$ and deduce Proposition \ref{cor1} from this result.
\begin{thm}\label{main}
Let $\pi:\CX\lra S=\Spec R$ be a proper flat morphism with $\CX$ regular. Assume that the residue field of $R$ is finite of characteristic $p>0$. Let $X$ denote the generic fiber of $\pi$. Define:
$$ H^2_{\fppf}(\CX_{R^{sh}}, \QQ_p(1)):=\varprojlim_{n} H^2_{\fppf}(\CX_{R^{sh}},\mu_{p^n})\otimes_{\ZZ_p}\QQ_p,$$ 
then the natural map
$$ H^2_{\fppf}(\CX_{R^{sh}}, \QQ_p(1)) \lra H^2_{\et}(X_{\bar{K}},\QQ_p(1))^{I}
$$
is surjective.
\end{thm}
\begin{rem}
In the case that $\pi$ is a strictly semi-stable morphism, assuming the conjecture of purity of the weight filtration( cf. \cite[Conj. 2.6.5]{Illu}),  by the same method, one can show that the claim holds for any $i\geq 0$.

\end{rem}
In the case that $\pi$ is smooth and proper, by the same argument as in the proof of Theorem \ref{main}, one can prove the following result:

\begin{prop}
Let $\pi:\CX\lra S=\Spec R$ be a smooth and proper morphism. Assume that 
the residue field of $R$ is finite of characteristic $p>0$. Let $X$ denote the generic fiber of $\pi$. Define:
$$ H^i_{L}(\CX_{R^{sh}}, \QQ_p(m)):=\varprojlim_{n} \HH^2_{\et}(\CX_{R^{sh}},\ZZ(m)_{\et}\otimes^{\LL}\ZZ/p^n)\otimes_{\ZZ_p}\QQ_p,$$ 
where $\ZZ(m)_{\et}$ denotes Bloch's cycle complex for the etale topology on $\CX_{R^{sh}}$. Then there is a surjective natural map
$$H^i_{L}(\CX_{R^{sh}}, \QQ_p(m)) \lra H^i_{\et}(X_{\bar{K}},\QQ_p(m))^{I}.
$$
\end{prop}
When $m=0$, it was shown in \cite[I. (4.1)]{FM} that the natural map $H^i_{\et}(\CX_{R^{sh}}, \QQ_p) \rightarrow H^i_{\et}(X_{\bar{K}},\QQ_p)^{I}$ is an isomorphism as a consequence of Fontaine’s $C_{crys}$-comparison isomorphism in $p$-adic Hodge theory, proven by Fontaine and Messing \cite{FM} and Faltings \cite{Fal}.

In the case that $\pi$ is a semi-stable morphism, Sato \cite[\S1.3]{Sat} defined a $p$-adic etale Tate twist $\mathfrak{T}_n(m)\in D^b(\mathcal{X}_{\et},\mathbb{Z}/p^n\mathbb{Z})$ and conjectured that $\mathfrak{T}_n(m)$ is naturally isomorphic to $\ZZ(m)_{\et}\otimes^{\LL}\ZZ/p^n$, which was proved by Geisser when $\pi$ is smooth and by Sato \cite[Prop. 4.5.1]{Sat} when $m\leq 1$. Assuming that the inverse system of $H^i(Y_{\bar{k}}, \nu_{Y,n}^{m}), n\geq 1$ satisfies the Mittag-Leffler condition (it is known to hold when $Y$ is smooth and proper or $m=0$), where $\nu_{Y,n}^{m}$ are the logarithmic Hodge–Witt sheaves defined by Sato in \cite{Sat1}, which agree with the logarithmic de Rham-Witt sheaves $W_n\Omega_{Y, \log}^m$ when $Y$ is smooth, one can prove that the following natural map is surjective under the assumption of the conjecture of purity of the weight filtration( cf. \cite[Conj. 2.6.5]{Illu})
$$\varprojlim_n \HH^i_{\et}(\mathcal{X}_{R^{sh}},\mathfrak{T}_n(m))\otimes_{\ZZ_p} \mathbb{Q}_p\lra H^i_{\et}(X_{\bar{K}},\QQ_p(m))^{I}. $$
This leads to the following conjecture.
\begin{conj}
Let $\pi:\CX\lra S=\Spec R$ be a proper flat morphism with $\CX$ regular. Assume that 
the residue field of $R$ is finite. Let $X$ denote the generic fiber of $\pi$, then there is a surjective natural map
$$H^i_{L}(\CX_{R^{sh}}, \QQ_\ell(m)) \lra H^i_{\et}(X_{\bar{K}},\QQ_\ell(m))^{I}.
$$
\end{conj}
\begin{rem}
In the case that $\ell$ is invertible in $R$, $\ZZ(m)_{\et}\otimes^{\LL}\ZZ/\ell^n$ was conjectured to be isomorphic to the etale sheaf $\mu_n^{m}$ (cf. \cite[Thm. 12.5]{Lev}), assuming this, then the conjecture is exactly same as Conjecture \ref{conj0}.
\end{rem}

\subsection{Proof of Theorem \ref{main}}
We will prove Theorem \ref{main} under the assumption that $\pi$ is a semi-stable projective morphism. The general case can be reduced to this case by de Jong's alteration theorem \cite[Cor. 5.1]{deJ1}( cf. Theorem \ref{thm25} and Remark \ref{rmkforthm3.2} below).

\begin{defn}
Let $\pi:\CX\lra S=\Spec R$ be a flat separated morphism of finite type. Assume that $\CX$ is regular and irreducible. Let $Y$ be the special fiber of $\pi$. Let $Y_i, i\in I$ be the irreducible components of $Y$. Put $Y_J =\cap_{j\in J} Y_j$ (scheme-theoretic intersection) for a non empty subset $J$ of $I$. We say $\CX$ is \emph{strictly semi-stable} over $S$ if the following property hold:
\begin{itemize}
\item[a)] the generic fiber $X$ of $\pi$ is smooth over $K=\mathrm{Frac}(R)$,
\item[b)] $Y$ is a reduced scheme,
\item[c)] and for each nonempty $J\subseteq I$, $Y_J$ is smooth over $k$ and has codimension $\#J$ in $\CX$.
\end{itemize}
We say $\CX$ is \emph{semi-stable} over $S$ (or with a \emph{semi-stable reduction}) if the situation etale locally looks as described above.

\end{defn}

\begin{prop}\label{thm13}
Let $\pi:\mathcal{X}\longrightarrow S=\Spec R$ be a semi-stable flat projective morphism, where $R$ is a Henselian DVR of characteristic $0$ with a perfect residue field $k$ of characteristic $p>0$. Let $i:Y\longrightarrow \mathcal{X}$ be the special fiber and $j:U=X_K\longrightarrow \mathcal{X}$ be the generic fiber. Then the natural map 
$$ \tau_{\leq 1}Rj_*\mathbb{Z}/p^n(1)\longrightarrow Rj_*\mathbb{Z}/p^n(1) $$
induces a surjection:
$$\varprojlim_n H^2_{\et}(\mathcal{X},\tau_{\leq 1}Rj_*\mathbb{Z}/p^n(1))\otimes_{\ZZ}\mathbb{Q}\longrightarrow H^2_{\et}(X_{\bar{K}},\mathbb{Q}_p(1))^{G_K}.$$
\end{prop}
To prove the above statement, we need the following two lemmas:
\begin{lem}[Corollary 5.2 (ii) and Theorem 5.4 in \cite{CN}]
\label{lem14}
Notations as above. Assume that $R$ is complete. Equips $\mathcal{X} $  with the log-structure defined by the special fiber. 
Let $\mathscr{S}_n(r)_\mathcal{X}$ be the (log) syntomic sheaf modulo $p^n$ on $Y_{\et}$, there exist period morphisms\\
$$\alpha_{r,n}^{FM}:\mathscr{S}_n(r)_{\mathcal{X}}\longrightarrow i^*Rj_*\mathbb{Z}/p^n(r)_{X_K}^\prime $$ 
from logarithmic syntomic cohomology to logarithmic p-adic nearby cycles. Here we set $\mathbb{Z}/p^n(r)^\prime :=\frac{1}{p^{a(r)}}\mathbb{Z}_p(r)\otimes _\ZZ\mathbb{Z}/p^n$ where $a(r)=[r/(p-1)]$.
Then we have 
\begin{itemize}
\item[(i)]
 for $i\geq r+1$, $\mathscr{H}^i(\mathscr{S}_n(r))$ is annihilated by $p^{N(r)}$.
 \item[(ii)] 
 for $0\leq i\leq r$, the kernel and cokernel of the period map
$$\alpha_{r,n}^{FM}:\mathscr{H}^i(\mathscr{S}_n(r)_{\mathcal{X}})\longrightarrow i^*R^ij_*\mathbb{Z}/p^n(r)_{X_K}^\prime $$ 
 is annihilated by $p^N$ for
an integer $N = N(K, p, r)$, which only depends on $K$, $p$, $r$.
\end{itemize}
\end{lem}
\begin{rem}
For $0\leq r\leq p-2$ and $n\geq 1$, it is known that
$$
\alpha_{r,n}^{FM}:\mathscr{S}_n(r)_{\mathcal{X}}\longrightarrow \tau _{\leq r}i^*Rj_*\mathbb{Z}/p^n(r)_{X_K} $$ 
is an isomorphism for $\CX$ a log-scheme log-smooth over a Henselian discrete
valuation ring $R$ of mixed characteristic. This was proved by Kato \cite{Ka1}\cite{Ka2}, Kurihara \cite{Kur}, and Tsuji \cite{Ts1}\cite{Ts2}. If $p>2$, taking $r=1$, their results are sufficient for our purposes. To address the case  $p=2$, we require the general result from the above result of Colmez and Niziol \cite{CN} .
\end{rem}

\begin{lem} [Thm. A(5) in \cite{NN}, \cite{DN}]\label{lem15}
Notations as above, define $R\Gamma_{\syn}(\mathcal{X},r)_n := R\Gamma(\mathcal{X}_{\et},\mathscr{S}_n(r))$ and  $R\Gamma_{\syn}(\mathcal{X},r):=\mathrm{homlim}_n R\Gamma_{\syn}(\mathcal{X},r)_n$. 
 Define $H^i_{\syn}(\mathcal{X},\mathbb{Q}_p(r)):=H^i(R\Gamma_{\syn}(\mathcal{X},r))\otimes \mathbb{Q}$. Then there is a spectral sequence 
 $$
\sideset{^{\syn}}{^{i,j}_2}{\mathop{\mathrm{E}}}=H^i_{st}(G_K,H^j_{\et}(X_{\bar{K}},\mathbb{Q}_p(r)))\Rightarrow H^{i+j}_{\syn}(\mathcal{X},\mathbb{Q}_p(r)),
$$
which is compatible with the Hochschild-Serre spectral sequence for etale cohomology
$$
\sideset{^{\et}}{^{i,j}_2}{\mathop{\mathrm{E}}}=H^i(G_K,H^j_{\et}(X_{\bar{K}},\mathbb{Q}_p(r)))\Rightarrow H^{i+j}_{\et}(X,\mathbb{Q}_p(r)).
$$
It degenerates at $E_2$ for $X_K$ projective and smooth. In particular, the edge map 
 $$H^2_{\syn}(\mathcal{X},\mathbb{Q}_p(1))\longrightarrow H^2_{\et}(X_{\bar{K}},\mathbb{Q}_p(1))^{G_K}$$
is surjective.
\end{lem}

Proof of Proposition $\ref{thm13}$.
\begin{proof}
Replacing $R$ by its completion, we can assume that $R$ is complete.
By Lemma \ref{lem14}, the maps
$$
\mathscr{H}^i(\tau_{\leq 1}\mathscr{S}_n(1)_{\mathcal{X}})\longrightarrow \mathscr{H}^i(\mathscr{S}_n(1)_{\mathcal{X}}) $$
and 
$$\mathscr{H}^i(\tau_{\leq 1}\mathscr{S}_n(1)_{\mathcal{X}})\longrightarrow \mathscr{H}^i(i^*\tau_{\leq 1}Rj_*\mathbb{Z}/p^n(1)_{X_K}^\prime)$$
have kernel and cokernel annihilated by some $p^N$, where $N$ only depends on $\pi:\mathcal{X}\longrightarrow S$ and is independent of $n$.
So do \\
$$
H^2_{\et}(Y,\tau_{\leq 1}\mathscr{S}_n(1)_{\mathcal{X}})\longrightarrow H^2_{\et}(Y,\mathscr{S}_n(1)_{\mathcal{X}}) $$
and 
$$H^2_{\et}(Y,\tau_{\leq 1}\mathscr{S}_n(1)_{\mathcal{X}})\longrightarrow H^2_{\et}(Y,i^*\tau_{\leq 1}Rj_*\mathbb{Z}/p^n(1)_{X_K}^\prime). $$
Taking limit, we get an isomorphism\\
$$\varprojlim_n H^2_{\et}( Y,\mathscr{S}_n(1)_{\mathcal{X}})\otimes_\ZZ \mathbb{Q}  \cong\varprojlim_n H^2_{\et}(Y,i^*\tau_{\leq 1}Rj_*\mathbb{Z}/p^n(1)_{X_K}^\prime)\otimes_\ZZ \mathbb{Q}.
$$
By the proper base change theorem, we have
$$H^2_{\et}(\mathcal{X},\tau_{\leq 1}Rj_*\mathbb{Z}/p^n(1)_{X_K}^\prime)\cong H^2_{\et}(Y,i^*\tau_{\leq 1}Rj_*\mathbb{Z}/p^n(1)_{X_K}^\prime).$$
There is a natural map\\
$$ H^2_{\syn}(\mathcal{X},\mathbb{Q}_p(1))  \longrightarrow \varprojlim_n H^2_{\et}( Y,\mathscr{S}_n(1)_{\mathcal{X}})\otimes_{\ZZ} \mathbb{Q}.
$$
By composition, we get a natural map
$$ H^2_{\syn}(\mathcal{X},\mathbb{Q}_p(1))\lra \varprojlim_n H^2_{\et}(\mathcal{X},\tau_{\leq 1}Rj_*\mathbb{Z}/p^n(1)_{X_K}^\prime)\otimes_{\ZZ} \mathbb{Q}.
$$
By Lemma $\ref{lem15}$, the natural map
$$H^2_{\syn}(\mathcal{X},\mathbb{Q}_p(1))\lra H^2_{\et}(X_{\bar{K}},\mathbb{Q}_p(1))^{G_K}
$$
is surjective.
By the following commutative diagram
\begin{displaymath}
\xymatrix{ 
H^2_{\syn}(\mathcal{X},\mathbb{Q}_p(1))\ar[dr]\ar[r] & \varprojlim_n H^2_{\et}(\mathcal{X},\tau_{\leq 1}Rj_*\mathbb{Z}/p^n(1)_{X_K}^\prime)\otimes_{\ZZ} \mathbb{Q} \ar[d]  \\
  & H^2_{\et}(X_{\bar{K}},\mathbb{Q}_p(1))^{G_K}}
\end{displaymath}
the map
$$\varprojlim_n H^2_{\et}(\mathcal{X},\tau_{\leq 1}Rj_*\mathbb{Z}/p^n(1))\otimes_{\ZZ}\mathbb{Q}\longrightarrow H^2_{\et}(X_{\bar{K}},\mathbb{Q}_p(1))^{G_K}$$
is surjective.
\end{proof}
\begin{lem}\label{thm11}
 Assuming that $R$ has a finite residue field, let $X$ be a smooth proper variety over $K$.  Assuming the conjecture of purity of the weight filtration \emph{(\cite[Conj. 2.6.5)]{Illu})}(which is known for $\dim(X)\leq 2$),  then the eigenvalues of the geometric Frobenius action on $H^{i}_{\et}(X_{\bar{K}},\mathbb{Q}_p(m))^I$ are Weil numbers of weight $\leq$ $i-2m$.
\end{lem}
\begin{proof}
By de Jong's alteration theorem \cite[Cor. 5.1]{deJ1}, there exists an alteration of $X$ with semi-stable reduction over $R$. Without loss of generality, we can assume that there exists a proper semi-stable morphism $\pi:\mathcal{X}\longrightarrow \Spec R$ whose generic fiber is identified with $X/K$.

Replace $K$ by its completion. Let $K_0$ ( resp. $K_1$) denote the fraction field of $W(k)$( resp. $W(\bar{k})$). By the semi-stable comparison theorem, there is a canonical isomorphism
$$H^i_{\et}(X_{\bar{K}},\mathbb{Q}_p)\otimes_{\mathbb{Q}_p}B_{st}\cong H^i_{0}(\mathcal{X})\otimes_{K_0}B_{st}$$
compatible with actions of $G_K$, Frobenius $\phi$, $N$ and the filtration after tensoring $K$ on both sides, where the $\phi$ action on right side is given by $\phi_{st}\otimes \phi_B$ and $N$ acts by $N\otimes 1+1\otimes N$ on the right side. Set $V=H^i(X_{\bar{K}},\mathbb{Q}_p)$. We also write $D_{st}(V)$ for $H^i_{0}(\mathcal{X})$. 
Note $D_{st}(V(m))=D_{st}(V)(m)$.  And we have\\
$$
V(m)=\{x\in D_{st}(V(m))\otimes_{K_0} B_{st}| \phi x=x, Nx=0, 1\otimes_{K_0} x \in Fil^0\}.
$$
Since $B_{st}^I=K_1$ and $B_{cris}=\Ker (N: B_{st}\longrightarrow B_{st})$, we have\\
$$V(m)^I=\{x\in D_{st}(V(m))\otimes_{K_0} K_1| \phi x=x, Nx=0 \} \cap Fil^0.
$$
Since $N\phi=p\phi N$ on $D_{st}(V(m))$,  so $\phi$ keeps $(D_{st}(V(m))^{N=0}$. $N$ acts as zero on $K_1$, therefore\\
$$
V(m)^I\subset\{x\in D_{st}(V(m))^{N=0}\otimes_{K_0} K_1| \phi x=x\}.
$$
 Assuming $k=\mathbb{F}_{p^n}$, there is a $\phi^n$-equivariant filtration $M_i$ on $D_{st}(V)$ determined by $N$. Since $N$ acts on $D_{st}(V(m))$ same as action on $D_{st}(V)$ (identifying the underlying space) and $\phi_{D_{st}(m)}=p^{-m}\phi_{D_{st}} $, we have
$$ 
\{x\in D_{st}(V(m))^{N=0}\otimes_{K_0} K_1| \phi x=x\}=\{ x\in D_{st}(V)^{N=0}\otimes_{K_0} K_1|  \phi^n(x)=p^{mn}x\}.
$$
$\phi^n$ can be written as $(\phi^n_{st}\otimes 1) \circ (1\otimes \phi^n_{B})$ as $K_0$-linear maps on $D_{st}(V)^{N=0}\otimes_{K_0} K_1$. Set $A=\phi^n_{st}\otimes 1$ and $B=1\otimes \phi^n_{B}$, then $\phi^n=AB=BA$. Set \\
$$U=\{ x\in D_{st}(V)^{N=0}\otimes_{K_0} K_1|  \phi^n(x)=p^{mn}x\}.$$
It is an invariant $\mathbb{Q}_p$-linear subspace of $A$ and $B$. Since $D_{st}(V)^{N=0}\subseteq M_0$, by \cite[Conj. 2.6.6(a)]{Illu}, all eigenvalues of $\phi^n$ on $D_{st}(V)^{N=0}$ are Weil numbers of weight $\leq i$. So there is polynomial $P(X)$ ( the product of Galois conjugates of the characteristic polynomial of $\phi^n$) in $\mathbb{Q}[X]$ such that $P(\phi^n)=0$ on $D_{st}(V)^{N=0}$. So $P(B^{-1}p^{mn})=0$ on $U$. Set $G(X)=X^{\deg(P)}P(p^{mn}X^{-1})$, we have $G\in \mathbb{Q}[X]$ and $G(B)=0$ on $U$.  Actually, the $B$ action on $U$ coincides with the action of the arithmetic Frobenius $F\in \Gal(K^{sh}/K)$ on $U$. Therefore, $H^i(X_{\bar{K}},\mathbb{Q}_p(m))^I$ is a $B$-invariant subspace of $U$. Therefore, $G(F)=0$ on $H^i(X_{\bar{K}},\mathbb{Q}_p(m))^I$. Since all roots of $P(X)$ are Weil numbers of weight $\leq i$, all roots of $G(X)$ are Weil numbers of weight $\geq 2m-i$.
\end{proof}

\begin{rem}
 The conjecture of purity of the weight filtration is known to hold for the case dim $X\leq 2$. For $i\leq 2$, the statement in the theorem can be reduced to the case dim $X\leq 2$ using a Lefschetz hyperplane argument, thus holding unconditionally for $i\leq2$. In the case that $X$ has a good reduction, the eigenvalues of the geometric Frobenius action on $H^{i}_{\et}(X_{\bar{K}},\mathbb{Q}_p(m))^I$ are Weil numbers of pure weight $i-2m$. A sketch of the above proof can be found in \cite[\S 5]{Jan}.
\end{rem}

\begin{lem}\label{lem16}
Let $\pi:\mathcal{X}\longrightarrow S=\Spec R$ be a semi-stable flat projective morphism. Assume that it comes from a base change of some $\mathcal{X}_0\longrightarrow S_0=\Spec R_0$, where $R_0$ is a Henselian DVR with a finite residue field $k_0$ and $R=R^{sh}_0$. Let $\mathfrak{T}_n(1)\in D^b(\mathcal{X}_{\et},\mathbb{Z}/p^n\mathbb{Z})$ be the $p$-adic etale Tate twist defined by K. Sato in \emph{\cite[\S1.3]{Sat}} ( which is naturally isomorphic to $\mathbb{G}_m\otimes^\LL\mathbb{Z}/p^n\mathbb{Z}[-1]$). Then the natural map\\
$$\varprojlim_n H^2_{\et}(\mathcal{X},\mathfrak{T}_n(1))\otimes_\ZZ \mathbb{Q}\longrightarrow H^2_{\et}(X_{\bar{K}},\mathbb{Q}_p(1))^{G_K}$$
is surjective.
\end{lem}
\begin{proof}
By the definition of $\mathfrak{T}_n(m)$, there is a distinguished triangle in $D^b(\mathcal{X}_{\et},\mathbb{Z}/p^n\mathbb{Z})$\\
\begin{equation}\label{eqsato}
i_*\nu_{Y,n}^{m-1}[-m-1]\longrightarrow \mathfrak{T}_n(m)\longrightarrow \tau_{\leq m}Rj_*\mathbb{Z}/p^n(m) \longrightarrow
i_*\nu_{Y,n}^{m-1}[-m],
\end{equation}
where $\nu_{Y,n}^m$ are generalized Logarithmic Hodge–Witt sheaves which agree with $W_n\Omega_{Y,\log}^{m}$ if $Y$ is smooth (cf. \cite[Lem. 1.3.1]{Sat}). $\nu_{Y,n}^0$ can be identified with 
$$
\bigoplus_{y\in Y^0}i_{y*}\mathbb{Z}/p^n\mathbb{Z}
$$
where $Y^0$ is the set of generic points of $Y$. Taking $m=1$ in (\ref{eqsato}) and taking cohomology, we get an exact sequence\\
$$\varprojlim_n H^2_{\et}(\mathcal{X},\mathfrak{T}_n(1))\otimes_\ZZ \mathbb{Q} \longrightarrow \varprojlim_n H^2_{\et}(\mathcal{X},\tau_{\leq 1}Rj_*\mathbb{Z}/p^n(1))\otimes_\ZZ \mathbb{Q}\longrightarrow 
\varprojlim_n H^1_{\et}(Y,\nu_{Y,n}^0)\otimes_\ZZ \mathbb{Q}.
$$
$G_{k_0}$ acts on the above sequence, and we will show the third term is of pure weight $1$. Let $\widetilde{Y}\longrightarrow Y $be an alteration such that $\widetilde{Y}$ is smooth. It induces a map\\
$$
H^1_{\et}(Y,\nu_{Y,n}^0)\longrightarrow H^1_{\et}(\widetilde{Y},\nu_{\widetilde{Y},n}^0).
$$
Since  $ H^1_{\et}(Y,\nu_{Y,n}^0)\hookrightarrow H^1_{\et}(Y^0,\mathbb{Z}/p^n\mathbb{Z})$ and  $H^1_{\et}(\widetilde{Y},\nu_{\widetilde{Y},n}^0)\hookrightarrow H^1_{\et}(\widetilde{Y}^0,\mathbb{Z}/p^n\mathbb{Z})$, and the kernel of $H^1_{\et}(Y^0,\mathbb{Z}/p^n\mathbb{Z}) \longrightarrow  H^1_{\et}(\widetilde{Y}^0,\mathbb{Z}/p^n\mathbb{Z})$ is killed by some positive integer independent of $n$, we have an injection\\
$$\varprojlim_n H^1_{\et}(Y,\nu_{Y,n}^0)\otimes_\ZZ \mathbb{Q}\hookrightarrow \varprojlim_n H^1_{\et}(\widetilde{Y},\nu_{\widetilde{Y},n}^0)\otimes_\ZZ \mathbb{Q}.$$
Therefore, the natural map
$$\varprojlim_n H^1_{\et}(Y,\nu_{Y,n}^0)\otimes_\ZZ \mathbb{Q} \lra \varprojlim_nH^1_{\et}(\widetilde{Y},\nu_{\widetilde{Y},n}^0)\otimes_\ZZ\mathbb{Q}$$
is injective.
We may assume that the irreducible components of $Y$ and $\widetilde{Y}$ are defined over $k_0$. Therefore, $H^1_{\et}(\widetilde{Y},\nu_{\widetilde{Y},n}^0)$ admits a $\Gal(k/k_0)$-action. Since $\widetilde{Y}$ is smooth over $k$, we have $\nu_{\widetilde{Y},n}^0=\mathbb{Z}/p^n\mathbb{Z}$. Thus
 $$\varprojlim_nH^1_{\et}(\widetilde{Y},\nu_{\widetilde{Y},n}^0)\otimes_\ZZ\mathbb{Q}=H^1_{\et}(\widetilde{Y},\mathbb{Q}_p).
 $$
  Since $H^1_{\et}(\widetilde{Y},\mathbb{Q}_p)$ is of pure weight $1$ (cf. \cite[\S 3]{Jan}), $\varprojlim_n H^1_{\et}(Y,\nu_{Y,n}^0)\otimes_\ZZ \mathbb{Q}$ is also of pure weight $1$.

By Proposition $\ref{thm13}$, 
$$
\varprojlim_n H^2_{\et}(\mathcal{X},\tau_{\leq 1}Rj_*\mathbb{Z}/p^n(1))\otimes_\ZZ\mathbb{Q}\longrightarrow H^2_{\et}(X_{\bar{K}},\mathbb{Q}_p(1))^{G_K}
$$
is surjective. By Lemma $\ref{thm11}$, $H^2_{\et}(X_{\bar{K}},\mathbb{Q}_p(1))^{G_K}$ is of mixed weight $\leq 0$. Therefore\\
$$\varprojlim_n H^2_{\et}(\mathcal{X},\mathfrak{T}_n(1))\otimes_\ZZ \mathbb{Q}\longrightarrow H^2_{\et}(X_{\bar{K}},\mathbb{Q}_p(1))^{G_K}$$
is surjective.
\end{proof}

Proof of Theorem \ref{main}
\begin{proof}
Let $f:\CX_{\fppf}\lra \CX_{\et}$ be the morphism between Grothendieck topologies. By \cite[Thm. 3.9]{Mil1}, $R^if_*\GG_m=0$ for $i>0$. By the Kummer exact sequence of $\fppf$-sheaves
 $$0\lra\mu_{p^n}\lra\GG_m\stackrel{p^n}{\lra}\GG_m\lra 0,$$
the complex $\GG_m\stackrel{p^n}{\lra}\GG_m$ can be viewed as an acyclic resolution of $\mu_{p^n}$. Hence,
$$Rf_*\mu_{p^n}\cong \mathbb{G}_m\otimes^\LL\mathbb{Z}/p^n\mathbb{Z}[-1].$$
Since $\mathfrak{T}_n(1)\cong \mathbb{G}_m\otimes^\LL\mathbb{Z}/p^n\mathbb{Z}[-1]$ (cf. \cite[Prop. 4.5.1]{Sat}), we have 
$$ H^2_{\fppf}(\CX_{R^{sh}}, \mu_{p^n})=H^2_{\et}(\CX_{R^{sh}},Rf_*\mu_{p^n})\cong  H^2_{\et}(\CX_{R^{sh}},\mathfrak{T}_n(1)).
 $$
Then the claim follows from Lemma \ref{lem16}. 
\end{proof}
\subsection{Proof of Proposition \ref{cor1}}

\begin{lem}\label{lem18}
Let $\pi:\mathcal{X}\longrightarrow S=\Spec R$ be a proper flat morphism, where $R$ is an excellent strictly Henselian DVR with an algebraically closed residue field. Let $K$ denote the quotient field of $R$. Assuming that $\mathcal{X}$ is regular and the generic fiber $X$ is smooth projective geometrically connected over $K$, then the map
$$ 
\Br(\mathcal{X})\longrightarrow \Br(X_{K^s})^{G_K}
$$
has a kernel of finite exponent. The claim still
holds for the prime-to-$p$ part if the residue field of $R$ is only separably closed.
\end{lem}
\begin{proof}
If $\dim(\mathcal{X})\leq 2$, Artin's theorem \cite[Thm.3.1]{Gro3} implies that both groups vanish. If $\dim(\CX)>2$, by Lemma \ref{thekeytrick}, there are smooth projective integral curves $Z_i \subset X$, a continuous $G_K$-module $A$, and a $G_K$-equivariant map
$$\Pic(X_{K^s})\times A\lra \bigoplus_i\Pic(Z_{i,K^s})$$
with a kernel and a cokernel of finite exponent. It follows that the natural map
$$H^1(K,\Pic_{X/K})\lra \bigoplus_i H^1(K,\Pic_{Z_i/K})$$
has a kernel of finite exponent. By taking the Zariski closures of $Z_i$ in $\CX$ and then desingularizing them, we can assume that there are proper flat morphisms $\pi_i:\mathcal{Z}_i\longrightarrow S$ such that $\CZ_i$ is regular and the generic fiber $Z_i$ is smooth over $K$ and $S$-morphisms $\mathcal{Z}_i\longrightarrow \mathcal{X}$.
Consider the  commutative diagram \\
\begin{displaymath}
\xymatrix{ 
0 \ar[r] & H^1(K,\Pic_{X/K}) \ar[r]\ar[d] & \Br(X)\ar[r]\ar[d] &   \Br(X_{K^s})^{G_K} \ar[d]
\\
0 \ar[r] & \bigoplus_i H^1(K,\Pic_{Z_i/K}) \ar[r] & \bigoplus_i \Br(Z_i)\ar[r] &  \bigoplus_i \Br(Z_{i,K^s})^{G_K}  }
\end{displaymath}
The exact rows arise from the spectral sequences\\
$$
H^p(K,H^q(X_{K^s},\mathbb{G}_m))\Rightarrow H^{p+q}(X,\mathbb{G}_m),
$$
and the fact that $H^q(K, H^0(X_{K^s},\mathbb{G}_m))=0, q>0$. Since $\Br(\CZ_i)=0$, $\Br(\CX)\cap H^1(K,\Pic_{X/K})$ will be mapped to zero in $\Br(Z_i)$. Thus, $\Br(\CX)\cap H^1(K,\Pic_{X/K})$ is contained in the kernel of the first column, which has finite exponent.
\end{proof}

Proof of Proposition \ref{cor1}.
\begin{proof}
It follows directly from the preceding lemma that the natural map
$$\Br(\CX_{R^{sh}})(p)\lra\Br(X_{\Bar{K}})^{I}(p)$$
has a kernel of finite exponent. By Proposition \ref{prop2.1}, the natural map
$$H^2_{\et}(X_{\bar{K}},\QQ_p(1))^{I} \lra V_p\Br(X_{\bar{K}})^{I}
$$
is surjective. By Theorem \ref{main}, the natural map
$$V_p\Br(\CX_{R^{sh}})\lra V_p\Br(X_{\Bar{K}})^{I}$$
is surjective. Since $\Br(X_{\Bar{K}})^{I}$ is of cofinite type,
$$\Br(\CX_{R^{sh}})(p)\lra\Br(X_{\Bar{K}})^{I}(p)$$
has a finite cokernel.
\end{proof}
\subsection{Proof of Corollary \ref{big}}
We will deduce the finiteness of the kernel and the $p$-primary part of the cokernel from Proposition \ref{cor1}. The finiteness of the prime-to-$p$-parts of the kernel and the cokernel was proved by Colliot-Thélène and Saito in \cite[Cor. 2.6]{CTSa} (cf. Remark \ref{rem11}). To prove the finiteness of the cokernel, we will use a Bertini type theorem for strictly semi-stable morphisms over DVRs, as developed in \cite{JS}, along with a pullback trick introduced by Colliot-Thélène and Skorobogatov (cf. \cite{CTS1} or \cite{Yua}), which allows us to reduce the problem to the case of relative dimension $1$.
\begin{lem}\label{lem2.2} 
Let $\pi:\mathcal{X}\longrightarrow S=\Spec R$ be a proper flat map, where $R$ is an excellent Henselian DVR with a fraction field $K$ and a finite residue field $k$ of characteristic $p>0$. Assuming that $\mathcal{X}$ is regular and the generic fiber $X$ of $\pi$ is geometrically connected over $K$, then there is an exact sequence\\
$$\Br(\mathcal{X})\longrightarrow \Br(\CX_{R^{sh}})^{G_k}\longrightarrow H^2(S,R^1\pi_*\mathbb{G}_m),$$
where the first map in the sequence has a finite kernel.
\end{lem}
\begin{proof}
By the Leray spectral sequence 
$$ 
E_2^{p,q}=H^p(S,R^q\pi_*\mathbb{G}_m)\Rightarrow H^{p+q}(\mathcal{X},\mathbb{G}_m),
$$
we get a long exact sequence
$$H^2(S,\mathbb{G}_m)\longrightarrow \Ker(H^2(\mathcal{X},\mathbb{G}_m)
\longrightarrow H^0(S,R^2\pi_*\mathbb{G}_m))\longrightarrow H^1(S,R^1\pi_*\mathbb{G}_m)\longrightarrow H^3(S,\mathbb{G}_m)
$$
By \cite[Chap. II, Prop. 1.5]{Mil1}, $H^i(S,\mathbb{G}_m)=0$ for all $i\geq 1$. Since $H^0(C,R^2\pi_*\mathbb{G}_m)=\Br(\CX_{R^{sh}})^{G_k}$, we get the desired exact sequence. To show that the kernel of the first map is finite, it suffices to show that $H^1(S,R^1\pi_*\mathbb{G}_m)$ is finite. We have
$$H^1(S,R^1\pi_*\mathbb{G}_m)=H^1(G_k, \Pic(\CX_{R^{sh}})).$$
Since the natural map $\Pic(\CX_{R^{sh}}) \rightarrow \Pic(X_{K^{sh}})$ is surjective and has a finitely generated cokernel, it suffices to show that
$H^1(G_k, \Pic(X_{K^{sh}}))$ is finite. Since $\Br(K^{sh})=0$, we have 
$$\Pic(X_{K^{sh}})=\Pic_{X/K}(K^{sh}).$$
Since the cokernel of $\Pic^0_{X/K}(K^{sh})\rightarrow \Pic_{X/K}(K^{sh})$ is finitely generated, it suffices to show that $H^1(G_k, \Pic(X_{K^{sh}}))$ is finite. This follows from \cite[Chap. I, Prop.3.8]{Mil1}.
\end{proof}
By Lemma \ref{lem2.2}, the kernel of the natural map $\Br(\CX_{R^{sh}})\rightarrow \Br(X_{\Bar{K}})$ has finite exponent. By the above lemma, the kernel of the natural map $\Br(\CX)\rightarrow \Br(X_{\bar{K}})^{G_K}$ also has finite exponent and is actually finite, since $\Br(X)$ is of cofinite type. To show that
the natural map $\Br(\CX)\rightarrow \Br(X_{\bar{K}})^{G_K}$ has a finite cokernel, it suffices, by Proposition \ref{cor1}, to show that the natural map
$$\Br(\CX_{R^{sh}})^{G_k}\longrightarrow H^2(S,R^1\pi_*\mathbb{G}_m)$$
has an image of finite exponent. In the case where $\dim(\CX)=2$, this is clear since $\Br(\CX_{R^{sh}})=0$. More generally, we will use the pullback trick again to reduce the problem to the case of relative dimension $1$.
\begin{lem}\label{Bertini}
Let $\pi:\mathcal{X}\longrightarrow S=\Spec R $ be a projective flat morphism, where $R$ is a strictly Henselian DVR with a residue field $k$ and a quotient field $K$. Assume that $\mathcal{X}$ is strictly semi-stable over $S$ ( cf. \cite[Def. 1.1]{JS} ) and that its generic fiber is geometrically connected over $K$. Let $Y$ be the special fiber of $\pi$. Write $Y=\cup_{i=1}^{n}Y_i$ as the union of irreducible components (by the assumption, $Y_i$ are smooth projective connected varieties over $k$ ). Assume $m=\dim(Y)\geq 1$. Fix an $S$-embedding $\mathcal{X}\hookrightarrow \mathbb{P}^n_{S}$. Then there exist hyperplanes $H_1,..., H_{m-1}$ defined over $R$ such that the scheme-theoretic intersections $\mathcal{Z}= (\cap_{i=1}^{m-1} H_i)\cap \CX$ satisfies the same assumption as $\mathcal{X}\longrightarrow S$ ( strictly semi-stable and has a geometrically connected generic fiber) and the scheme-theoretic intersection $C_j=Y_j\cap(\cap_{i=1}^{m-1} H_i)$ are distinct smooth projective connected curves.
\end{lem}
\begin{proof}
If $\dim(Y)=1$, the claim is trivial. Now, assume $m=\dim(Y)>1$, by Bertini's theorem for strictly stable-schemes over a DVR \cite[Thm. 1.2 and Lem. 1.3]{JS}, there exists a hyperplane $H$ over $S$ such that $H$ intersects $Y_j$ and $Y_i\cap Y_j$ transversely in $\mathbb{P}^n_k$, and $H\cap\mathcal{X}$ is strictly semi-stable over $S$. Since $\dim(Y)>1$ and $k$ is separably closed, by \cite[Chap. III, Cor. 7.9]{Har}, the scheme-theoretic intersections $H\cap Y_j$ and $H_{\bar{K}}\cap X_{\bar{K}}$ are smooth and connected.  Since $H$ intersects $Y_i\cap Y_j$  transversely, $H\cap Y_i$ are distinct irreducible components of the special fiber of $\mathcal{X}\cap H$.  By induction on the dimension , the claim is true for $\mathcal{X}\cap H$, so there exit $H_2,..., H_{m-1}$ satisfying conditions in the claim for $\mathcal{X}\cap H$. Then $H_1=H, H_2, ..., H_{m-1}$ are desired hyperplanes for $\mathcal{X}$.  
\end{proof}

\begin{lem}\label{intersction}
Let $\pi:\CZ\lra S=\Spec R$ be a proper flat morphism where $R$ is a strictly Henselian DVR and $\CZ$ is an integral regular scheme of dimension $2$. Let $C_i$ denote the irreducible components of the special fiber $C$. Then the intersection pairing 
$$\bigoplus_i\QQ\cdot [C_i] \times \bigoplus_i\QQ\cdot [C_i] \lra \QQ$$
has both the left and the right kernels spanned by $[C]$.
\end{lem}
\begin{proof}
For a detailed proof, refer to \cite[Chap. 9, Thm. 1.23]{Liu}.
\end{proof}
\begin{lem}\label{alsplit}
Let $G$ be a group and $M,\  L$ be $G$-modules. We say a $G$-map $f:M\lra L$ is almost split if there exists a $G$-map $N\lra L$ such that $M\oplus N\lra L$ has kernel and cokernel of finite exponent. Assume that there is a commutative diagram with exact rows
\begin{displaymath}
\xymatrix{
	0\ar[r] &A_1\ar[r]\ar[d] & B_1\ar[r]\ar[d] &C_1\ar[r]\ar[d] & 0\\
0\ar[r] & A_2\ar[r]& B_2\ar[r] &C_2\ar[r] & 0}
\end{displaymath}
such that the first column, the third column and $A_2\rightarrow B_2$ are almost split. Then the middle column is almost split.
\end{lem}
\begin{proof}
Let $A^\prime_1\lra A_2$, $A^\prime_2 \lra B_2$ and  $C^\prime_1\lra C_2$ be $G$-maps such that 
$A_1^\prime \oplus A_1 \lra A_2$, $A_2\oplus A^\prime_2 \lra B_2$ and $C_1\oplus C^\prime_1\lra C_2$ have kernels and cokernels of finite exponent. We may assume $A^\prime_2 \subseteq B_2$ and  $C^\prime_1\subseteq C_2$. Then $A^\prime_2\lra C_2$ has a kernel and a cokernel of finite exponent. Let $B^\prime $ denote the preimage of $C^\prime_1$ under this map. One can show that $B^\prime \lra C^\prime_1$ has a kernel and a cokernel of finite exponent. Replace $C^\prime_1$ by the image of this map, we may assume that the map $B^\prime \lra C^\prime_1$ is surjective. Let $K$ denote its kernel, so $K\subseteq A_2$ and is of finite exponent. Consider the diagram
\begin{displaymath}
\xymatrix{
	0\ar[r] &A_1^\prime\oplus A_1\oplus K\ar[r]\ar[d] &A_1^\prime\oplus B_1\oplus B^\prime \ar[r]\ar[d] &C_1\oplus C^\prime_1 \ar[r]\ar[d] & 0\\
0\ar[r] & A_2\ar[r]& B_2\ar[r] &C_2\ar[r] & 0}
\end{displaymath}
where the first and the third column has kernels and cokernels of finite exponent. By the snake lemma, the second column has a kernel and a cokernel of finite exponent.
\end{proof}

\begin{prop}\label{plocalsurj}
Let $\pi:\mathcal{X}\longrightarrow S=\Spec R$ be a projective flat morphism with a smooth and geometrically connected generic fiber $X$, where $R$ is an excellent Henselian DVR. Assuming that the base change of $\pi$ to $R^{sh}$ is strictly semi-stable, then
$$\Br(\mathcal{X})\longrightarrow \Br(\mathcal{X}_{R^{sh}})^{G_{k}}$$
has a kernel and a cokernel of finite exponent.
\end{prop}
\begin{proof}
By Lemma \ref{lem2.2}, it suffices to show that the cokernel is of finite exponent. If $\pi$ is of relative dimension $1$,  by Artin's theorem \cite[Thm.3.1]{Gro3}, $\Br(\CX_{R^{sh}})=0$, so the claim is true in this case. For the general case, we will use the pullback trick to reduce the problem to the case of relative dimension $1$. Consider the commutative diagram 
\begin{displaymath}
\xymatrix{
\CZ\ar[r]\ar[d]^{\pi^\prime} & \CX \ar[d]^{\pi}\\
 S \ar[r]^{id}  &  S}
\end{displaymath}
where $\pi^\prime$ is proper flat and of relative dimenson $1$. By Proposition \ref{lem2.2}, there is commutative diagram
\begin{displaymath}
\xymatrix{
\Br(\mathcal{X})\ar[r]\ar[d] &\Br(\CX_{R^{sh}})^{G_k}\ar[r]^{\partial}\ar[d] & H^2(S,R^1\pi_*\mathbb{G}_m) \ar[d]^{a}\\
\Br(\mathcal{Z})\ar[r] &\Br(\CZ_{R^{sh}})^{G_k}\ar[r] & H^2(S,R^1\pi^\prime_*\mathbb{G}_m).}
\end{displaymath}
Since $\Br(\CZ_{R^{sh}})^{G_k}=0$, we have $\Im(\partial)\subseteq \Ker(a)$. The idea is to show that $\Ker(a)$ is of finite exponent. Instead of working on one $\CZ$,  we will find finitely many $\CZ_i$ and show that the kernel of
$$H^2(S,R^1\pi_*\mathbb{G}_m)\stackrel{\sum a_i}\lra \bigoplus_{i} H^2(S,R^1\pi^\prime_*\mathbb{G}_m)
$$
is of finite exponent. By the Hochschild–Serre spectral sequence
$$ H^p(G_{k},H^q(\Spec R^{sh}, R^1\pi_*\mathbb{G}_m))\Rightarrow H^{p+q}(S, R^1\pi_*\mathbb{G}_m)),
$$
we get \\
$$H^2(S,R^1\pi_*\GG_m)=H^2(G_k,H^0(\Spec R^{sh}, R^1\pi_*\mathbb{G}_m))=H^2(G_k,\Pic(\CX_{R^{sh}})).
$$
Thus, the map $a$ can be identified with
$$H^2(G_k,\Pic(\CX_{R^{sh}})) \lra H^2(G_k,\Pic(\CZ_{R^{sh}})).$$
Let $Y$ denote the special fiber of $\pi$ and $Y_i$ denote its irreducible components. Then there is an exact sequence
$$ \bigoplus_i \ZZ\cdot[Y_i]\lra \Pic(\CX_{R^{sh}}) \lra \Pic(X_{K^{sh}})\lra 0.$$
The kernel of the first arrow is generated by $[Y]$, this actually follows from the following long exact sequence (cf.\cite[\S 6]{Gro3})
$$ H^0(\CX_{R^{sh}},\GG_m) \lra H^0(X_{K^{sh}},\GG_m)\lra H^1_{Y}(\CX_{R^{sh}},\GG_m)\lra H^1(\CX_{R^{sh}},\GG_m)\lra H^1(X_{K^{sh}},\GG_m),
$$
and the facts 
$$H^0(\CX_{R^{sh}},\GG_m)=(R^{sh})^*, \  H^0(X_{K^{sh}},\GG_m)=(K^{sh})^{*},$$
and 
$$H^1_{Y}(\CX_{R^{sh}},\GG_m)=\bigoplus_i \ZZ\cdot[Y_i].$$
Thus, we have 
$$ 0\lra (\bigoplus_i \ZZ\cdot[Y_i])/\ZZ\cdot [Y]\lra \Pic(\CX_{R^{sh}}) \lra \Pic(X_{K^{sh}})\lra 0.$$
We will denote $(\bigoplus_i \ZZ\cdot[Y_i])/\ZZ\cdot [Y]$ by $D_\CX$. Let $\CZ\lra S$ be a proper flat morphism with $\CZ$ an integral regular scheme of dimension $2$. Let $D_\CZ$ denote the subgroup of $\Pic(\CZ_{R^{sh}})$ generated by prime divisors contained in the special fiber. 
Consider the $G_k$-equivariant intersection pairing
\begin{equation}\label{interequ}
\Pic(\CZ_{R^{sh}})\times D_{\CZ} \lra \ZZ,
\end{equation}
by Lemma \ref{intersction}, the restriction of the pairing to 
$D_{\CZ}\times D_{\CZ}$ is perfect after tensoring $\QQ$ (Here $\CZ_{R^{sh}}$ may be disconnected, the claim is still true since it is true for each connected component). Let $D_{\CZ}^{\perp}$
denote the left kernel of the pairing (\ref{interequ}).  Then the map
$$D_{\CZ}\oplus D_{\CZ}^{\perp}\lra \Pic(\CZ_{R^{sh}})$$
has a kernel and a cokernel of finite exponent. The kernel can be identified with the left kernel of the paring  $D_{\CZ}\times D_{\CZ} \lra \ZZ$ which is finite. Since $D_{\CZ}\lra \Hom(D_{\CZ},\ZZ)$ has a cokernel killed by some positive integer $m$, for any element $v\in \Pic(\CZ_{R^{sh}})$, $v$ defines an element $v^*$ in $\Hom(D_{\CZ},\ZZ)$, so there exists an element $u\in D_{\CZ}$ such that $mv^*=u^*$. Thus $mv-u \in D_{\CZ}^{\perp}$. This proves that the cokernel is killed by $m$. Thus, $D_{\CZ}\lra \Pic(\CZ_{R^{sh}})$ is almost split.

By Lemma \ref{Bertini}, we can choose $\CZ_0$ by taking hyperplane sections repeatedly such that $D_{\CX}\cong D_{\CZ_0}$. Then by Lemma \ref{thekeytrick} (ii), we can choose smooth projective integral curves $Z_i\subseteq X$ for $i=1,...,n$ such that 
$$\Pic(X_{K^s})\lra \bigoplus_{i=0}^{n} \Pic(Z_{i,K^s})$$
is almost split as $G_K$-modules. Taking $G_{K^{sh}}$-invariant, we have 
$$\Pic(X_{K^{sh}})\lra \bigoplus_{i=0}^{n} \Pic(Z_{i,K^{sh}})$$
is almost split as $G_k$-modules (note that $\Pic(X) \rightarrow\Pic_{X/k}(k)$ has a kernel and cokernel of finite exponent for any smooth projective variety $X$ over a field $k$).

Taking the Zariski closure of $Z_i$ in $\CX$, then desingularizes it, we get a $S$-morphsim $\CZ_i\lra \CX$ such that $\CZ_i\lra S$ is proper flat with generic fiber $Z_i$. Thus, we get the following commutative diagram

\begin{displaymath}
\xymatrix{
	0\ar[r]& D_{\CX}\ar[r]\ar[d]& \Pic(\CX_{R^{sh}}) \ar[r]\ar[d] &\Pic(X_{K^{sh}})\ar[r]\ar[d]& 0 \\
0\ar[r]& \bigoplus_{i=0}^{n}D_{\CZ_i}\ar[r] & \bigoplus_{i=0}^{n}\Pic(\CZ_{i,R^{sh}}) \ar[r]&\bigoplus_{i=0}^{n}\Pic(Z_{i,K^{sh}})\ar[r]& 0}
\end{displaymath}
By constructions, the first column (injective morphism between finitely generated abeliam groups with a continuous $G_k$-action), the third column and the second row are almost split, by Lemma \ref{alsplit}, the second column is also almost split. Therefore, 
$$H^2(G_k,\Pic(\CX_{R^{sh}}) )\lra \bigoplus_{i=0}^{n}H^2(G_k,\Pic(\CZ_{i,R^{sh}}))
$$
has a kernel of finite exponent. This completes the proof.
\end{proof}
\begin{prop}\label{cor22}
Let $\pi:\CX \lra S=\Spec R$ as above. Assuming that $\pi$ is strictly semi-stable and the residue field of $R$ is finite, then the natural map
$$
\Br(\mathcal{X})\longrightarrow \Br(X_{\bar{K}})^{G_{K}}
$$
has a finite kernel and a finite cokernel.
\end{prop}
\begin{proof}
By Proposition \ref{cor1},
$$\Br(\mathcal{X}_{R^{sh}})(p)\longrightarrow \Br(X_{\bar{K}})^{G_{K^{sh}}}(p)$$
has a kernel and a cokernel of finite exponent. It follows that the map 
$$\Br(\mathcal{X}_{R^{sh}})^{G_k}(p)\longrightarrow \Br(X_{\bar{K}})^{G_{K}}(p)$$
also has a kernel and a cokernel of finite exponent. By the lemma below, the claim also holds for the prime-to-$p$ part. Thus, the natural map 
$$\Br(\mathcal{X}_{R^{sh}})^{G_k}\longrightarrow \Br(X_{\bar{K}})^{G_{K}}$$
has a kernel and a cokernel of finite exponent. By Proposition \ref{plocalsurj},
$$
\Br(\mathcal{X})\longrightarrow \Br(X_{\bar{K}})^{G_{K}}
$$ 
has a kernel and a cokernel of finite exponent. Since both groups are of cofinite type, the kernel and cokernel are actually finite.
\end{proof}
\begin{rem}
To finish the proof of Corollary \ref{big}, we will remove the strictly semi-stable assumption in section \ref{remove}.
\end{rem}
\begin{lem}\label{lpart}
Let $\pi:\CX\lra S=\Spec R$ be a proper flat morphism where $R$ is a Henselian DVR with a finite residue field $k$ and a quotient field $K$. Assume that $\CX$ is regular and the generic fiber $X$ of $\pi$ is smooth projective geometrically connected over $K$. Let $\ell\neq \Char(k)$ be a prime. Then
$$\Br(\CX_{R^{sh}})^{G_k}(\ell)\lra\Br(X_{K^s})^{G_K}(\ell)$$
has a kernel and a cokernel of finite exponent and is an isomorphism for all but finitely many $\ell$. Moreover, if $\pi$ is smooth, the map is an isomorphism for all $\ell\neq \Char(k)$.
\end{lem}
\begin{proof}
Let $Y$ be the special fiber of $\pi$ and Let $Y_i$ be its irreducible components. We may assume that $Y_i$ is geometrically irreducible. By purity of Brauer group, there is an exact sequence

$$0\lra\Br(\CX_{R^{sh}})(\ell)\lra \Br(X_{K^{sh}})(\ell)\lra \bigoplus_{i} H^1(D_{i,k^s},\QQ_{\ell}/\ZZ_{\ell}),
$$
where $D_i$ is the smooth locus of $Y_i$. Thus, we get an exact sequence
$$0\lra\Br(\CX_{R^{sh}})^{G_k}(\ell)\lra \Br(X_{K^{sh}})^{G_k}(\ell)\lra \bigoplus_{i} H^1(D_{i,k^s},\QQ_{\ell}/\ZZ_{\ell})^{G_k}.
$$
By \cite[Lem. 2.4]{Qin1}, $H^1(D_{i,k^s},\QQ_{\ell}/\ZZ_{\ell})^{G_k}$ is finite  and vanishes for all but finitely many $\ell$.\\
Consider the spectral sequence
$$H^p(K^{sh}, H^q(X_{K^s},\GG_m))\Rightarrow H^{p+q}(X_{K^{sh}},\GG_m).$$
Since $H^p(K^{sh},\GG_m)=0$ for $p\geq 1$, we have 
$$0\lra H^1(K^{sh},\Pic_{X/K})\lra \Br(X_{K^{sh}})\lra \Br(X_{K^s})^{G_{K^{sh}}}.
$$
Taking $G_k$-invariant, we get 
$$0\lra H^1(K^{sh},\Pic_{X/K})^{G_k}\lra \Br(X_{K^{sh}})^{G_k}\lra \Br(X_{K^s})^{G_K}.
$$
By \cite[Thm. 2.1]{CTS1}, the last map has a cokernel of finite exponent. By \cite[Lem. 5.4]{Qin2}, the group
$H^1(K^{sh},\Pic_{X/K})^{G_k}(\ell)$ is finite and vanishes for all but finitely many $\ell$. This proves the first claim. The second claim follows from  \cite[Lem. 3.1(b)]{Qin1}.
\end{proof}

\subsection{Removing the semi-stable assumption}\label{remove}
We need the following two technical lemmas \ref{lem24} and \ref{lemgab} to remove the semi-stable assumption. 
\begin{lem}\label{lem24}
Let $f:Y\longrightarrow X$ be a proper morphism between regular noetherian schemes. Assume that $X$ is irreducible and each irreducible component of $Y$ dominates $X$, and that there is an open dense subset $U$ of $X$ such that $f|_{f^{-1}(U)}$ is a finite etale Galois covering over $U$. Assume $\Char(K(X))=0$. Set $G=\Aut(f^{-1}(U)/U)$. Then $G$ acts on $\Br(Y)$ and
the natural map \\
$$
\Br(X)\longrightarrow \Br(Y)^{G}
$$
has a kernel and a cokernel of finite exponent.
\end{lem}
\begin{proof}
Let $g \in G$, $g$ induces a $X$-rational map $Y\dashrightarrow Y$. Since $Y\longrightarrow X$ is proper, by the valuation criterion of properness, $g$ can extend to an open dense subset $V\subset Y$ with codim$(Y-V)\geq 2$. Thus, $g$ induces a map $\Br(Y)\longrightarrow \Br(V)$. By the purity of the Brauer group\cite{Ces}, $\Br(Y)=\Br(V)$, so $G$ acts on $\Br(Y)$. Let $K(Y)$ denote the product of the function fields of connected components of $Y$. By \cite[Thm. 1.2]{Ces}, there is an exact sequence\\
$$0 \longrightarrow \Br(Y) \longrightarrow \Br(K(Y))\longrightarrow \prod_i\prod_{y\in Y_i^1} \Br(K(Y_i))/\Br(\mathcal{O}_{Y,y}),
$$
where $Y_i^1$ denotes the set of point of codimension $1$ in the connected component $Y_i$ of $Y$.
Therefore we have the following commutative diagram\\

\begin{displaymath}
\xymatrix{ 
0 \ar[r] & \Br(X)\ar[r]\ar[d] & \Br(K(X))\ar[r]\ar[d] &   \prod \limits_{x\in X^1} \Br(K(X))/\Br(\mathcal{O}_{X,x}) \ar[d]
\\
0 \ar[r] & \mathcal{K} \ar[r] & \Br(K(Y))^G \ar[r]&  \prod \limits_{y\in f^{-1}(X^1)} \Br(K(\CO_{Y,y}))/\Br(\mathcal{O}_{Y,y})}
\end{displaymath}
Note that all points in $f^{-1}(X^1)$ has codimension $1$. Since $\Br(Y)^G\subset \mathcal{K}$, it suffices to show that the first column has a kernel and a cokernel of finite exponent. By the snake lemma, it is enough to show that the kernel of the third column is killed by $[K(Y):K(X)]^4$. Without loss of generality, we may assume that $X$ is the spectrum of a DVR.
Now, $f$ is a finite flat morphism ( cf. \cite[Prop. 14.123 and Cor. 12.89]{AG1v2} ).  
$Y$ is also the normalization of $X$ in $K(Y)$. Hence,  $\Gal(K(Y)/K(X))$ acts on $Y/X$. Consider the following commutative diagram
with exact rows.
\begin{displaymath}
\xymatrix{ 
0 \ar[r] & \Br(X)\ar[r]\ar[d] & \Br(K(X))\ar[r]\ar[d] & \Br(K(X))/\Br(X) \ar[d]\ar[r] & 0
\\
0 \ar[r] & \Br(Y)^G \ar[r] & \Br(K(Y))^G \ar[r]&  \Br(K(Y))/\Br(Y)}
\end{displaymath}
By the Hochschild-Serre spectral sequence associated with the action of $G$ on $Y$ ( cf. \cite[p. 18]{Qin1} ), the kernels and cokernels of the first two columns are killed by $[K(Y):K(X)]^2$. By the snake lemma, the kernel of the third column is killed by $[K(Y):K(X)]^4$.

\end{proof}
\begin{lem}\label{lemgab}\emph{(Gabber)}
Let $\pi:\mathcal{X}\longrightarrow S=\Spec R$ be a proper flat morphism, where $R$ is a Henselian DVR of characteristic $0$ with a perfect residue field. Let $K=\Frac(R)$ and $X$ denote the generic fiber of $\pi$. Assuming that $\mathcal{X}$ is regular and $X$ is geometrically connected over $K$, then there exists a strictly semi-stable projective morphism $\pi_1:\mathcal{X}_1\longrightarrow  S_1=\Spec R_1$, where $R_1$ is the ring of integers in some finite Galois extension $K_1$ of $K$, and an alteration $f:\mathcal{X}_1\longrightarrow \mathcal{X}\times_SS_1$ over $S_1$ such that $\mathcal{X}_1$ is irreducible and regular and $K(\mathcal{X}_1)/K(\mathcal{X})$ is a finite Galois extension.
\end{lem}
\begin{proof}
It follows from  \cite[Prop. 4.4.1]{Vidal}.
\end{proof}
\begin{thm}\label{thm25}
Let $\pi:\mathcal{X}\longrightarrow S=\Spec R$ be a proper flat morphism, where $R$ is a Henselian DVR of characteristic $0$ with a finite residue field.  Assuming that $\mathcal{X}$ is regular and the generic fiber $X$ of $\pi$ is geometrically connected over $K$, then the natural map
$$\Br(\mathcal{X})\longrightarrow \Br(X_{\bar{K}})^{G_{K}}$$ 
has a kernel and a cokernel of finite exponent.
\end{thm}
\begin{proof}
By the previous lemma, there is an alternation $f:\mathcal{X}_1\longrightarrow \mathcal{X}$ satisfying the assumptions in the Lemma \ref{lem24}. Let $X_1$ denote the generic fiber of $\pi_1$. Obviously, $X_1\otimes_{K}\bar{K}\longrightarrow X_{\bar{K}}$ also satisfies the assumptions in the Lemma \ref{lem24}. Set $G=\Gal(K(\mathcal{X}_1)/K(\mathcal{X}))$. By Lemma \ref{lem24}, the maps $\Br(\mathcal{X})\longrightarrow \Br(\mathcal{X}_1)^G$ and $\Br(X_{\bar{K}})\longrightarrow \Br(X_1\otimes _K\bar{K})^G$ have a kernel and a cokernel of finite exponent. Thus, the map
$$\Br(X_{\bar{K}})^{G_K}\longrightarrow (\Br(X_1\otimes _K\bar{K})^{G_K})^G$$
also has a kernel and a cokernel of finite exponent. Since 
$$X_1\otimes _K\bar{K}=\bigsqcup\limits_{\sigma:K_1\hookrightarrow \bar{K}} (X_1)_{\sigma},$$
so we have 
$$\Br(X_1\otimes _K\bar{K})^{G_K}=\Br((X_1)_{\bar{K}})^{G_{K_1}}.$$
Since $\mathcal{X}_1\longrightarrow S_1$ is strictly semi-stable, by Proposition \ref{cor22}, the natural map
$$\Br(\mathcal{X}_1)\longrightarrow \Br((X_1)_{\bar{K}})^{G_{K_1}}$$
has a kernel and a cokernel of finite exponent. It follows that 
$$\Br(\mathcal{X}_1)^G \lra (\Br(X_1\otimes _K\bar{K})^{G_K})^G$$
also has a kernel and a cokernel of finite exponent. Then the claim follows from the following commutative diagram\\
\begin{displaymath}
\xymatrix{\Br(\mathcal{X})\ar[r]\ar[d] & \Br(X_{\bar{K}})^{G_{K}} \ar[d] \\
\Br(\mathcal{X}_1)^G\ar[r]& (\Br(X_1\otimes _K\bar{K})^{G_K})^G}
\end{displaymath}
\end{proof}
\begin{rem}\label{rmkforthm3.2}
The strictly semi-stale assumption in the proof of Proposition \ref{cor1} can be removed by the same argument as above.
\end{rem}

\subsection{Proof of Theorem \ref{0cycle}}
\begin{thm}
There is a commutative diagram
\begin{displaymath}
\xymatrix{ A_0(X)\ar[r] \ar[rd]^{\phi_X} & \Hom(\Br(X)/\Br(K)+\Br(\CX), \QQ/\ZZ) \ar[d]^{\psi_X}\\
& \Alb_X(K)}
\end{displaymath}
and $\psi_X$ has a finite kernel and a finite cokernel. Let $f$ (resp. $g$) denote the natural map $\Br(\CX)\rightarrow \Br(X_{\bar{K}})^{G_K}$ (resp. $\Br(X)\rightarrow \Br(X_{\bar{K}})^{G_K}$) and $F$ denote the cokernel of $\Pic(X_{\bar{K}})^{G_K}\rightarrow \NS(X_{\bar{K}})^{G_K}$. Then there is a long exact sequence of finite groups
$$0\lra F\lra \Coker(\psi_X)^\vee  \lra \Ker(f)/\Br(\CX)\cap \Br(K) \lra H^1(K, \NS(X_{\bar{K}}))\\$$
$$\lra \Ker(\psi_{X})^\vee \lra \Coker(f)\lra \Coker(g) \lra 0.$$
\end{thm}
\begin{proof}
The commutativity of the diagram was proved by Kai (cf. \cite[Prop. 3.4]{Kai}). We will construct the long exact sequence by diagram chasing. By the Hochschild-Serre spectral sequence
$$
H^p(K,H^q(X_{K^s},\mathbb{G}_m))\Rightarrow H^{p+q}(X,\mathbb{G}_m)
$$
and the fact $H^3(K,\GG_m)=0$ (cf. \cite[Chap. II, Prop. 1.5]{Mil1}), there is an exact sequence
$$\Br(K)\lra \Ker(\Br(X)\lra \Br(X_{\bar{K}})^{G_K})\lra H^1(K, \Pic_{X/K})\lra 0.
$$
Consider the following diagram
\begin{displaymath}
\xymatrix{
0 \ar[r]& \Br(\CX)/\Br(K)\cap \Br(\CX)\ar[r]\ar[d]& \Br(X)/\Br(K)\ar[d] \ar[r] &\Br(X)/\Br(K)+\Br(\CX) \ar[r] \ar[d] & 0 \\
0 \ar[r]& \Br(X_{\bar{K}})^{G_K}\ar[r]^{\mathrm{id}}& \Br(X_{\bar{K}})^{G_K} \ar[r] & 0 \ar[r] & 0}
\end{displaymath}
By the Snake Lemma, there is a long exact sequence
$$0\lra \Ker(f)/\Br(K)\cap \Br(\CX) \lra H^1(K, \Pic_{X/K}) \lra \Br(X)/\Br(K)+\Br(\CX)$$
$$ \lra \Coker(f) \lra \Coker (g) \lra 0.$$
Consider the following diagram
\begin{displaymath}
\xymatrix{
& 0 \ar[d] \ar[r]& H^1(K,\Pic^0_{X/K})\ar[d] \ar[r]^{id} &H^1(K,\Pic^0_{X/K})\ar[r] \ar[d]^{a} & 0 \\
0 \ar[r]& \Ker(f)/\Br(K)\cap \Br(\CX) \ar[r]& H^1(K,\Pic_{X/K}) \ar[r]^{b} & \Br(X)/\Br(K)+\Br(\CX) & }
\end{displaymath}
Since $H^2(K, \Pic^0_{X/K})=0$ ( cf. \cite[Cor. 3.4]{Mil1}), there is an exact sequence
$$0\lra F\lra H^1(K,\Pic^0_{X/K})\lra H^1(K,\Pic_{X/K}) \lra H^1(K, \NS(X_{\bar{K}}))\lra 0.
$$
By the snake Lemma, we get a long exact sequence
$$0\lra F\lra \Ker(a)  \lra \Ker(f)/\Br(\CX)\cap \Br(K) \lra H^1(K, \NS(X_{\bar{K}}))
$$
$$\lra \Coker(a) \lra \Coker(b)\lra 0.$$
By the previous long exact sequence, we have
$$0\lra \Coker(b)\lra \Coker(f) \lra \Coker(g) \lra 0.$$
Thus, we have a long exact sequence
$$0\lra F\lra \Ker(a)  \lra \Ker(f)/\Br(\CX)\cap \Br(K) \lra H^1(K, \NS(X_{\bar{K}}))
$$
$$
\lra \Coker(a)\lra \Coker(f) \lra \Coker(g) \lra 0.$$
By definition, the map $\psi_X$ is the dual of the map $a$. So we have
$$\Coker(\psi_X)^\vee\cong \Ker(a) \quad \mathrm{and} \quad \Ker(\psi_X)^\vee\cong \Coker(a).$$ 
Since $F$ is a finitely generated torsion abelian group, $F$ is finite. There exists a finite Galois extension $L/K$ such that $G_L$ acts trivially on $\NS(X_{\bar{K}})$. Consider the exact sequence of Galois cohomology
$$0 \lra H^1(\Gal(L/K), \NS(X_{\bar{K}})) \lra H^1(K, \NS(X_{\bar{K}})) \lra H^1(L, \NS(X_{\bar{K}})),$$
the first group is finite since it is a finitely generated torsion abelian group. By the local class field theory, the third group is also finite. Thus,
$H^1(K, \NS(X_{\bar{K}}))$ is finite. By Corollary \ref{big}, $\Ker(f)$ and $\Coker(f)$ are finite. It follows that all groups in the long exact sequence are finite.

\end{proof}

\section{Proof of Theorem \ref{mainthm}}

\begin{lem}\label{purity}
Let $\pi:\mathcal{X}\longrightarrow S=\Spec R$ be a flat morphism with $\CX$ integral and regular, where $R$ is a DVR of characteristic $0$ with a fraction field $K$. Let $X$ denote the generic fiber of $K$. Then there is an exact sequence
$$
0\lra \Br(\CX)\lra \Br(X)\lra \Br(X_{K^h})/\Br(\CX_{R^h}).
$$
\end{lem}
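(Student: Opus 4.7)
The plan is to deduce the exact sequence from Grothendieck's purity theorem for the Brauer group applied to both the regular scheme $\CX$ and its base change $\CX_{R^h}$, together with a small diagram chase. First I would set up notation: write $Z = \CX\setminus X$ for the special fiber (which is a Weil divisor since $\CX$ is regular) and let $\eta_1,\ldots,\eta_n$ be the generic points of its irreducible components. Grothendieck--Gabber purity for the cohomological Brauer group on the regular noetherian scheme $\CX$ then supplies an exact sequence
$$
0\lra \Br(\CX)\lra \Br(X)\stackrel{\oplus\partial_i}{\lra} \bigoplus_{i=1}^{n} H^1(k(\eta_i),\QQ/\ZZ),
$$
which already yields the claimed injectivity $\Br(\CX)\hookrightarrow\Br(X)$.

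Next I would apply the same purity to the regular scheme $\CX_{R^h}$, using that $R\to R^h$ is ind-\'etale and induces the identity on residue fields, so that $\CX_{R^h}$ and $\CX$ have literally the same special fiber, and hence the same generic points $\eta_i$ with the same residue fields $k(\eta_i)$. This produces
$$
0\lra \Br(\CX_{R^h})\lra \Br(X_{K^h})\stackrel{\oplus\partial_i'}{\lra} \bigoplus_{i=1}^{n} H^1(k(\eta_i),\QQ/\ZZ).
$$
By functoriality of the purity construction under the flat base change $f:\CX_{R^h}\to\CX$, the residue maps are compatible: $\partial_i'\circ f^{\ast}=\partial_i$ as homomorphisms $\Br(X)\to H^1(k(\eta_i),\QQ/\ZZ)$ for each $i$.

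The exactness in the middle is then a short chase: an element $\alpha\in\Br(X)$ lies in the kernel of $\Br(X)\to\Br(X_{K^h})/\Br(\CX_{R^h})$ iff $f^{\ast}(\alpha)\in\Br(\CX_{R^h})$, iff $\partial_i'(f^{\ast}\alpha)=0$ for all $i$ (by the second purity sequence), iff $\partial_i(\alpha)=0$ for all $i$ (by the compatibility above), iff $\alpha\in\Br(\CX)$ (by the first purity sequence); the reverse inclusion is trivial. The main technical input is the Brauer-group purity exact sequence for regular noetherian schemes with a divisorial complement; once that is granted, the base-change compatibility of the residues is classical and the rest is purely formal.
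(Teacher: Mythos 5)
Your argument breaks at the key step, namely the claimed exactness of
$$
0\lra \Br(\CX)\lra \Br(X)\stackrel{\oplus\partial_i}{\lra} \bigoplus_{i} H^1(k(\eta_i),\QQ/\ZZ)
$$
on the \emph{full} torsion. This residue sequence is only available after inverting the residue characteristic $p$ of $R$. In the mixed-characteristic case (which is the case the lemma is actually used for, $R=\CO_v^h$), the residue fields $k(\eta_i)$ are function fields of the components of the special fiber, hence imperfect as soon as the relative dimension is $\geq 1$, and then the $p$-primary part of $\Br(\Frac(\CO_{\CX,\eta_i}))/\Br(\CO_{\CX,\eta_i})$ is strictly larger than $H^1(k(\eta_i),\QQ_p/\ZZ_p)$: by Kato's ramification theory there are wildly ramified $p$-torsion classes with nontrivial refined conductor (living in graded pieces involving $\Omega^1_{k(\eta_i)}\neq 0$) that are invisible to any character-valued residue map. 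So ``all residues in $H^1(k(\eta_i),\QQ/\ZZ)$ vanish'' does not imply membership in $\Br(\CO_{\CX,\eta_i})$, and your chase fails precisely on the $p$-part --- which is the delicate part of the whole paper (it is the part later handled via Fontaine--Messing theory). \v{C}esnavi\v{c}ius's purity theorem does not rescue the step as stated: it concerns removal of closed subsets of codimension $\geq 2$ and yields $\Br(\CX)=\bigcap_{x\in \CX^{(1)}}\Br(\CO_{\CX,x})$ inside $\Br(K(X))$, where the local condition at $\eta_i$ is genuinely stronger than vanishing of a residue character. (A secondary, repairable point: $\CX_{R^h}$ need not be integral, so your second purity sequence should be applied componentwise.)

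The structure of your proof can be saved by changing the target of the ``residue'': map $\Br(X)$ to $\bigoplus_i \Br(K(X)^h_{\eta_i})/\Br(\CO^h_{\CX,\eta_i})$ and use purity in the intersection form above; since $\CO_{\CX,\eta_i}$ and $\CO_{\CX_{R^h},\eta_i}$ have a common henselization, the comparison between $\CX$ and $\CX_{R^h}$ goes through, and this is essentially what the paper does in the remark following the lemma (to pass from $R^h$ to $\widehat{R}$). The paper's own proof of the lemma itself is simpler and avoids purity and residues altogether: it compares the two long exact sequences of cohomology with supports in the special fiber $Y$,
$$
H^2(\CX,\GG_m)\lra H^2(X,\GG_m)\lra H^3_Y(\CX,\GG_m),
$$
and likewise over $R^h$, and invokes the excision isomorphism $H^3_Y(\CX,\GG_m)\cong H^3_Y(\CX_{R^h},\GG_m)$ (Milne, \'Etale Cohomology III.1.27--1.28), together with injectivity of $H^2(\CX,\GG_m)\to\Br(K(X))$ for $\CX$ regular integral. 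That route treats all torsion, including the $p$-part, uniformly; you should adopt it or the henselian-local-ring variant of your residue sequence.
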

\begin{proof}
Let $Y$ denote the special fiber of $\pi$. 
It suffices to show 
$$ H^3_{Y}(\CX,\mathbb{G}_m)\cong H^3_{Y}(\CX_{R^h},\mathbb{G}_m).$$
This follows from the Excision theorem (cf.\cite[Chap III, Prop. 1.27 and Cor. 1.28]{Mil3}).
\end{proof}
\begin{rem}\label{compl}
In fact, the exact sequence above still holds if we replace $R^h$ by its completion. This actually follows from purity of Brauer groups. We may assume that $Y$ is of codimension $1$ with a generic point $y$. Then there is an injection $\Br(X)/\Br(\CX)\hookrightarrow \Br(K(X)^{h}_y)/\Br(\CO^h_{\CX,y})$. Since $\Char(K(X))=0$, we have $\Br(K(X)^{h}_y)=\Br(\widehat{K(X)}_y)$ and $\Br(\CO^h_{\CX,y})=\Br(\widehat{\CO_{\CX,y}})$. It is easy to see that the completion does not change if we replace $R$ by $\hat{R}$.

\end{rem}

\begin{lem}\label{klem}
Let $\pi:\CX\lra C=\Spec \CO_K$ be proper flat morphism with $\CX$ regular, where $K$ is a number field. Assuming that the generic fiber $X$ of $\pi$ is geometrically connected over $K$, then the natural map
$$\Br(X)/(\Br(\CX)+\Br(K)) \lra \prod_{v\in C^{\circ}}\Br(X_{K^h_v})/(\Br(\CX_{O_v^{h}})+\Br(K_v^h))
$$ 
has a finite kernel and the group $\Br(\CX)\cap \overline{\Br(K)}$ is finite, where $C^\circ$ denotes the set of closed point of $C$, $\CO_v^h$ denotes the Henselian local ring of $C$ at $v$ and $\overline{\Br(K)}$ denotes the image of the pullback map $\Br(K)\lra \Br(X)$.
\end{lem}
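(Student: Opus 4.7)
\emph{Reduction via purity.} I would begin by applying Lemma~\ref{purity} to the localization $\CX_{\CO_v}\to\Spec\CO_v$ at each closed point $v\in C^\circ$, yielding an injection $\Br(X)/\Br(\CX_{\CO_v})\hookrightarrow \Br(X_{K_v^h})/\Br(\CX_{\CO_v^h})$. Regularity of $\CX$ identifies $\Br(\CX)$ with the subgroup of $\Br(X)$ consisting of elements unramified at every vertical codimension-one point, a place-by-place condition, so $\Br(\CX)=\bigcap_{v\in C^\circ}\Br(\CX_{\CO_v})$ inside $\Br(X)$, and $M:=\Br(X)/\Br(\CX)$ embeds into $\prod_v M_v$ with $M_v:=\Br(X_{K_v^h})/\Br(\CX_{\CO_v^h})$. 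Set $I_v:=\ker(\Br(K_v^h)\to M_v)$: a residue computation along each component $D$ of the special fiber (with multiplicity $m_D$ and residue field extension $\kappa(D)/\kappa(v)$ of degree $n_D$) shows $I_v$ is killed by $\gcd_D(m_D n_D)$ and vanishes whenever $\pi$ is smooth at $v$, so $\bigoplus_v I_v$ is finite. The second assertion is immediate: $\pi^*\gamma\in\Br(\CX)\cap\overline{\Br(K)}$ forces $\gamma_v\in I_v$ at every $v$, and the resulting subgroup of $\bigoplus_v I_v$ (cut out by Brauer--Hasse--Noether's sum-of-invariants condition) is finite.

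\emph{Setting up the local map.} For the first assertion, let $\alpha\in\Br(X)$ represent a class in the kernel; writing $\alpha_v=\beta_v+\pi_v^*\gamma_v$ with $\beta_v\in\Br(\CX_{\CO_v^h})$ and $\gamma_v\in\Br(K_v^h)$, the class $\bar\gamma_v\in\Br(K_v^h)/I_v$ is well-defined and vanishes for almost all $v$ (because $\alpha$ extends to $\Br(\CX_U)$ over some dense open $U\subset C$). Since a different lift of $\alpha$ modulo $\Br(\CX)+\Br(K)$ shifts $(\bar\gamma_v)$ by the image of $\Br(K)$, one obtains a well-defined homomorphism
\[
\phi\colon \ker \longrightarrow \left(\bigoplus_v \Br(K_v^h)/I_v\right)\Big/\mathrm{Im}(\Br(K)).
\]
Paragraph~1 implies $\phi$ is injective: if $(\bar\gamma_v)$ comes from some $\gamma\in\Br(K)$, then replacing $\alpha$ by $\alpha-\pi^*\gamma$ yields a lift whose local components all lie in $I_v$, forcing $\alpha-\pi^*\gamma\in\bigcap_v\Br(\CX_{\CO_v})=\Br(\CX)$.

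\emph{Reciprocity from a closed point.} To bound $\mathrm{Im}(\phi)$, pick a closed point $x\in X$ of degree $d=[\kappa(x):K]$ (which exists since $X$ is proper over $K$). Properness of $\CX/C$ extends $x$ to $\tilde{x}_w\colon\Spec\CO_{\kappa(x),w}\to\CX_{\CO_v^h}$ for every place $w\mid v$ of $\kappa(x)$, whence $\tilde{x}_w^*\beta_v\in\Br(\CO_{\kappa(x),w})=0$ and so $x^*\alpha|_w=\gamma_v|_{\kappa(x)_w}$. The restriction-of-invariants formula for local Brauer groups gives
\[
\inv_v\bigl(\Tr_{\kappa(x)/K}(x^*\alpha)\bigr)=\sum_{w\mid v}[\kappa(x)_w:K_v^h]\cdot\inv_v(\gamma_v)=d\cdot\inv_v(\gamma_v),
\]
and Brauer--Hasse--Noether for $\Tr_{\kappa(x)/K}(x^*\alpha)\in\Br(K)$ forces $d\cdot\sum_v\inv_v(\gamma_v)\in\tfrac12\ZZ/\ZZ$. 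Taking zero-cycles of degree equal to the index $i(X)$, this refines to $i(X)\cdot\sum_v\inv_v(\gamma_v)\in\tfrac12\ZZ/\ZZ$, whereas the image of $\Br(K)$ in $\bigoplus_v\Br(K_v^h)/I_v$ is cut out by the stronger condition $\sum_v\inv_v\in\tfrac12\ZZ/\ZZ$. Consequently $\mathrm{Im}(\phi)$ is a subquotient of a cyclic group of order dividing $i(X)$, hence finite. The main subtlety is ensuring the sum $\sum_v\inv_v\gamma_v$ is well-defined modulo the $I_v$-ambiguity; this reduces to the consistency $i(X)\cdot\inv_v(I_v)\subseteq\tfrac12\ZZ/\ZZ$, which follows by applying the same reciprocity argument to $\alpha=0$ together with the observation that any $\iota_v\in I_v$ is killed by restriction along the local completions of $\kappa(x)$ over $v$.
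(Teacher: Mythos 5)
Your argument is correct in substance but takes a genuinely different route from the paper's. The paper runs a snake-lemma d\'evissage on the diagram comparing $\Br(K)\to\Br(X)$ with $\bigoplus_v\Br(K_v^h)\to\bigoplus_v\Br(X_{K_v^h})/\Br(\CX_{\CO_v^h})$: it identifies $\Ker(b)=\Br(\CX)$ by purity, controls $\Ker(a)$ and $\Coker(a)$ via the class field theory sequence $0\to\Br(K)\to\bigoplus_v\Br(K_v)\to\QQ/\ZZ\to 0$, kills $\Ker(f)$ and $\Ker(\bar f)$ by restriction--corestriction along $\Spec\CO_L\to\CX$ for a finite Galois $L/K$ with $X(L)\neq\emptyset$, and gets finiteness (not merely finite exponent) of $\Ker(f)$ from $X(K_v^h)\neq\emptyset$ for almost all $v$ (smooth fibers with rational points, via \cite{CTSDV} and Hensel). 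You instead (i) compute the local kernels $I_v$ (the paper's $\Ker(f)$ summands) explicitly by a residue analysis along the components of the special fiber, obtaining the sharper statement that $I_v$ is cyclic of order $\gcd_D(m_Dn_D)$ and vanishes when the fiber is geometrically integral --- so triviality at almost all $v$ needs only spreading out of geometric integrality, not Lang--Weil; and (ii) replace the snake-lemma bookkeeping by a direct reciprocity argument: pulling back along a closed point (or zero-cycle of degree $i(X)$), extended integrally by the valuative criterion so that $\Br$ of the henselian integers annihilates the $\beta_v$, constrains $i(X)\cdot\sum_v\inv_v(\gamma_v)$ and pins the image of your $\phi$ inside a finite cyclic group. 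Your $i(X)$-bound refines the paper's $[L:K]$-bound (indeed $i(X)\mid[L:K]$), and your ``$\alpha=0$'' trick recovers $i(X)\cdot I_v=0$ with no residues at all. Both proofs rest on the same two pillars: purity identifying $\Br(\CX)$ inside $\Br(X)$ place by place (Lemma~\ref{purity} plus \cite{Ces}), and properness to extend points over henselian DVRs.

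Three caveats, all repairable. First, your ``residue computation'' for $I_v$ is glossed on the $p_v$-primary part: $\kappa(D)$ is an imperfect field of characteristic $p_v$ and residue maps on the full Brauer group of the mixed-characteristic DVR $\CO_{\CX,\eta_D}$ have a wild part (Kato). The claim is nonetheless true because every class in $\Br(K_v^h)$ is cyclic of the form $(\chi,\pi_v)$ with $\chi$ unramified; writing $\pi_v=ut^{m_D}$ at $\eta_D$ reduces everything to showing $(\psi,t)\in\Br(\CO_{\CX,\eta_D})$ forces $\psi=0$ for $\psi$ unramified, which follows in any residue characteristic from the norm criterion for cyclic algebras together with the fact that norms from unramified extensions have valuation divisible by the degree --- you should say this, since na\"ive residues do not suffice at $p_v$. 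Second, ``smooth at $v$'' gives $m_D=1$ but not $n_D=1$; you need the fiber to be geometrically integral, which holds for almost all $v$ by constructibility (\cite[9.7.7]{EGA4}) since $X$ is geometrically integral in characteristic $0$. Third, $\Br(\CO_{\kappa(x),w})=0$ requires the henselization $\CO^h_{\kappa(x),w}$ (the Zariski local ring has many Brauer classes); your construction over $\CX_{\CO_v^h}$ produces the henselian version anyway, so this is only notation. (Also, for totally imaginary $K$ the image of $\Br(K)$ is cut out by $\sum_v\inv_v=0$ rather than $\sum_v\inv_v\in\frac{1}{2}\ZZ/\ZZ$, which only shrinks your bound.) None of these affects the finiteness conclusions.
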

\begin{proof}
Consider the following commutative diagram ( the existence of the columns follows from \cite[Lem. 1.16, Chap. III]{Mil3})
\begin{displaymath}
\xymatrix{ 
   &\Br(K) \ar[r]\ar[d] &\Br(X)\ar[r]\ar[d]^b& \Br(X)/\Br(K)\ar[r]\ar[d]^c & 0 \\
  &\oplus_{v}\Br(K_v^{h})\ar[r]^-{f}&\oplus_{v} \Br(X_{K^h_v})/\Br(\CX_{\CO_v^{h}})\ar[r] & \oplus_{v} \Br(X_{K^h_v})/(\Br(\CX_{\CO_v^{h}})+\Br(K_v^h)) & }
\end{displaymath}
 Let $a$ denote the map
$$\Br(K)\lra \bigoplus_{v}\Br(K_v^{h})/\Ker(f).$$
By the snake lemma, there is a long exact sequence
$$\Ker(a)\lra \Ker(b) \lra \Ker(c) \lra \Coker(a) \stackrel{\bar{f}}{\lra}\Coker(b).$$
By the purity of Brauer groups (cf. \cite{Ces} ) and Lemma \ref{purity}, $\Ker(b)=\Br(\CX)$. Thus, it suffices to show that $\Ker(a)$ and $\Ker(\bar{f})$ are finite groups. Since 
$$\Ker(\Ker(a)\lra \bigoplus_{ v|\infty} \Br(K_{v}) \oplus \Ker(f))=\Ker(\Br(K) \lra \bigoplus_{v}\Br(K_{v})),$$
by the exact sequence coming from the global class field theory (cf. \cite[Chap. II, Prop. 2.1]{Mil1})
$$
 0\longrightarrow \Br(K) \longrightarrow \bigoplus_{v} \Br(K_v)\longrightarrow \QQ/\ZZ \longrightarrow 0,
$$
we have that the map
$$\Ker(a)\lra \bigoplus_{ v|\infty} \Br(K_{v})\oplus \Ker(f)$$
is injective and $\Coker(a)$ is of cofinite type. Thus, it suffices to show that $\Ker(f)$ is finite and $\Coker(\bar{f})$ is of finite exponent. Let $L/K$ be a finite Galois extension such that $X(L)$ is not empty. Let $P\in X(L)$. Then $P$ defines a $K$-morphism $g:Z:=\Spec L \rightarrow X$ and the morphism can be extended to a $C$-morphism $g:\CZ:=\Spec \CO_L \lra \CX$ since $\pi$ is proper. Consider the composition
$$\bigoplus_{v}\Br(K_v^{h})\stackrel{f}{\lra} \bigoplus_{v} \Br(X_{K^h_v})/\Br(\CX_{\CO_v^{h}})\lra \bigoplus_{v} \Br(Z_{K^h_v})/\Br(\CZ_{\CO_v^{h}}).$$
Obviously, $\Ker(f)$ is a subgroup of the kernel of the composition. We want to show that the kernel of the composition is killed by $[L:K]$. Then, it will imply that $\Ker(f)$ is killed by $[L:K]$. The natural map $\CZ\lra C$ is finite flat of degree $[L:K]$. Its base change $Z_{K_v^h}\lra \Spec K_v^h$ is also finite flat of degree $[L:K]$. They induce restriction map $\mathrm{res}_v:\Br(K_v^h)\rightarrow \Br(Z_{K_v^h})$ and corestriction map $\mathrm{cores}_v:\Br(Z_{K_v^h})\rightarrow \Br(K_v^h)$ (cf. \cite[\S 3.8, p. 98]{CTS2} ). Since $\Br(\CO_v^h)=0$ for all $v\in C^\circ$, the composition map above can be identified with the restriction map
$$\bigoplus_{v}\Br(K_v^{h})/\Br(\CO_v^h)\stackrel{\oplus_v \mathrm{res}_v}{\lra}\bigoplus_{v} \Br(Z_{K^h_v})/\Br(\CZ_{\CO_v^{h}}).$$
Since the composition $(\oplus_v \mathrm{cores}_v)\circ(\oplus_v \mathrm{res}_v)$ is equal to the multiplication by $[L:K]$, thus, the kernel of $\oplus_v \mathrm{res}_v$ is killed by $[L:K]$. This proves that $\Ker(f)$ is killed by $[L:K]$. By functoriality, the same argument can imply that $\Ker(\bar{f})$ is also killed by $[L:K]$. To show that $\Ker(f)$ is finite, it suffices to show that for all but finitely many $v\in C^\circ$, the map
$$\Br(K_v^h)\lra \Br(X_{K^h_v})/\Br(\CX_{\CO_v^{h}})$$
is injective. If $X(K_v^h)$ is not empty, the restiction-corestriction argument above will imply that the kernel is killed by $1$ and therefore is trivial. Thus, it remains to be shown that $X(K_v^h)$ is not empty for all but finitely many $v$. Note that if $\CX_{\CO_v^h} \lra \Spec \CO_v^h$ is smooth and the special fiber over $v$ admits a rational point, by Hensel's lemma, $\CX_{O_v^h}$ admits a section. This will imply that $X(K_v^h)$ is not empty. Thus, it suffices to show that for all but finitely many $v$, the special fiber over $v$ is smooth and has a rational point. For the proof of this, we refer it to \cite[Chap. II, Lem. 2.1 , \S2]{CTSDV}.
\end{proof}
\begin{rem}
In fact, the above proof shows that the kernel is killed by $[L:K]$, and that $\Br(\CX)\cap \overline{\Br(K)}$ is killed by $2[L:K]$ for any finite extension $L/K$ such that $X(L) \neq \emptyset$.

If instead of assuming that $X$ is geometrically irreducible, we assume that $X$ is irreducible, then statement should be modified as follows:
the natural map
$$\Br(X)/(\Br(\CX)+H^2(K,\pi_{*}\GG_m)) \lra \prod_{v\in C^{\circ}}\Br(X_{K^h_v})/(\Br(\CX_{O_v^{h}})+H^2(K_v^h,\pi_{*}\GG_m))
$$ 
has a finite kernel. To explain this in more detail, let $L$ denote the algebraic closure of $K$ in $K(X)$. The Stein factorization $\CX \rightarrow \Spec \CO_L$ has a geometrically connected generic fiber. The sheaf $\pi_*\GG_m$ is the same as the direct image of $\GG_m$ under the map $\Spec L\rightarrow \Spec K$. So $H^2(K,\pi_{*}\GG_m)=\Br(L)$ and 
$H^2(K_v^h,\pi_{*}\GG_m)=\Br(L\otimes_K K_v^h)$.
\end{rem}
\begin{thm}\label{almostdonethm}
Let $\pi:\mathcal{X}\longrightarrow C$ be a proper flat morphism, where $C$ is $\Spec \CO_K$ for some number field $K$. Assume that $\mathcal{X}$ is regular and the generic fiber $X$ of $\pi$ is projective and geometrically connected over $K$. Let $\ell$ be a prime number. Then there are exact sequences up to finite groups
$$
0\longrightarrow \Sha(\Pic_{X/K}^0)\longrightarrow \Br(\mathcal{X}) \longrightarrow  \Br(X_{\olsi{K}})^{G_K},
$$
and
$$
0\longrightarrow \Sha(\Pic_{X/K}^0)(\ell)\longrightarrow \Br(\mathcal{X})(\ell)\longrightarrow \Br(X_{\olsi{K}})^{G_K}(\ell)\longrightarrow 0.
$$
\end{thm}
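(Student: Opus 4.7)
The proof splits into two halves: identifying the kernel of $\Br(\CX)\to\Br(X_{\bar K})^{G_K}$ (up to finite groups) with $\Sha(\Pic^0_{X/K})$, and controlling its cokernel. The first statement of the theorem requires finite cokernel globally; the second, stronger statement requires finite cokernel in each $\ell$-component, which actually implies the first once uniform $\ell$-vanishing is established.

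\textbf{Left exactness.} I would follow Grothendieck's original strategy in \cite[§4]{Gro3}, as generalized by the author in \cite{Qin1}, by running the Leray spectral sequence for $\pi$ and $\GG_m$. Its low-degree terms produce an exact sequence expressing $\Br(\CX)$ up to the image of $\Br(C)$ and contributions from $H^1(C,\Pic_{\CX/C})$ and $H^0(C,R^2\pi_*\GG_m)$; since $R^1\pi_*\GG_m$ restricts generically to $\Pic_{X/K}$, the kernel of $\Br(\CX)\to\Br(X_{\bar K})^{G_K}$ becomes almost identified with $\Sha(\Pic_{X/K})$, which by Proposition~\ref{twosha} is almost isomorphic to $\Sha(\Pic^0_{X/K})$. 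Running the argument directly requires local sections of $\pi$ that need not exist, so I would insert the Colliot-Thélène--Skorobogatov pull-back trick from \cite{CTS1}: pass to a finite Galois cover $L/K$ with $X(L)\neq\emptyset$, run the spectral-sequence argument over $\Spec\CO_L$, and descend by a restriction--corestriction computation (as in the proof of Lemma~\ref{klem}), which kills the error terms up to $[L:K]$-torsion.

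\textbf{Cokernel in each $\ell$-part.} Lemma~\ref{klem} gives that the map
$$\Br(X)/(\Br(\CX)+\Br(K)) \lra \bigoplus_{v\in C^\circ} \Br(X_{K_v^h})/(\Br(\CX_{\CO_v^h})+\Br(K_v^h))$$
has finite kernel. Combining this with the theorem of Colliot-Thélène--Skorobogatov \cite{CTS1} that the cokernel of $\Br(X)\to\Br(X_{\bar K})^{G_K}$ is finite, one deduces that
$$\Br(X_{\bar K})^{G_K}/\Br(\CX) \lra \bigoplus_{v\in C^\circ} \Br(X_{\overline{K_v^h}})^{G_{K_v^h}}/\Br(\CX_{\CO_v^h})$$
also has finite kernel. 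Each summand on the right is finite by Proposition~\ref{geosurj}, so the divisible part of the left-hand side is killed by this map and hence vanishes. Since $\Br(X_{\bar K})^{G_K}$ is of cofinite type by Proposition~\ref{prop2.1}, this forces $(\Br(X_{\bar K})^{G_K}/\Br(\CX))(\ell)$ to be finite for every prime $\ell$, yielding the second exact sequence up to finite groups.

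\textbf{Uniform $\ell$-vanishing and main obstacle.} For the first (unlocalized) sequence I must improve ``finite for each $\ell$'' to ``vanishing for all but finitely many $\ell$''. By the step above, it is enough to show that at every place $v$ of good reduction the local quotient $(\Br(X_{\overline{K_v^h}})^{G_{K_v^h}}/\Br(\CX_{\CO_v^h}))(\ell)$ vanishes for all $\ell$ exceeding a single uniform bound depending only on global invariants of $\CX$; the finitely many bad places contribute only finite groups by Proposition~\ref{geosurj}. This uniform control is the hard step. I would establish it via the integral Fontaine--Laffaille--Messing comparison \cite[§3]{BM}, which matches $H^2_{\et}(\CX_{\overline{K_v^h}},\ZZ_\ell(1))$ with an appropriate crystalline piece coming from the smooth proper model $\CX_{\CO_v^h}$ and, for $\ell$ above an explicit bound determined by the Hodge numbers and absolute ramification of $\CX$, forces the local Brauer quotient to be trivial; Proposition~\ref{lpart} then transports the vanishing to the quotient of interest. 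Ensuring that the Fontaine--Laffaille bound can be chosen uniformly across all good places -- so that a single cutoff on $\ell$ suffices globally -- is the principal technical difficulty of the argument.
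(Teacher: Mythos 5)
Your high-level outline of the cokernel step (finite kernel into the product of local quotients, local finiteness, killing the divisible part, cofinite type) does match the paper's strategy, but the pivotal deduction is asserted rather than proved, and it does not follow formally. You claim that Lemma~\ref{klem} together with the Colliot-Th\'el\`ene--Skorobogatov finite-cokernel theorem ``deduces'' that $\Br(X_{\bar K})^{G_K}/\Br(\CX) \to \prod_v \Br(X_{\bar K})^{G_v}/\Br(\CX_{\CO_v^h})$ has finite kernel. But if $x\in\Br(X_{\bar K})^{G_K}$ dies in every local quotient and $\beta\in\Br(X)$ is a lift (available up to finite index by CTS), then at each $v$ the class of $\beta$ agrees with an element coming from $\Br(\CX_{\CO_v^h})$ only modulo $\Br(K_v^h)$ \emph{and} modulo a discrepancy class in $H^1(K_v^h,\Pic_{X/K})\subseteq \Br(X_{K_v^h})/\Br(K_v^h)$; Lemma~\ref{klem} says nothing about these $H^1(K_v^h,\Pic_{X/K})$ terms. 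The paper's actual proof builds the two-row diagram from the Hochschild--Serre sequences over $K$ and over each $K_v^h$, identifies $\Ker(b)=\Br(\CX)$ via purity (Lemma~\ref{purity} and \cite{Ces}), and after the snake lemma must show that $\Ker(h)\to\Coker(f)$ has image of finite exponent. That is done by the genuine CTS pull-back trick: curves $Z_i\subset X$ and an auxiliary abelian variety $A$ with $\Pic(X_{\bar K})\times A(\bar K)\to\oplus_i\Pic(Z_{i,\bar K})$ of kernel and cokernel killed by $N$, together with $\Br(Z_{i,\bar K})=0$ (Lemmas~\ref{Kera} and~\ref{coker}). Your proposal never invokes this curve-level comparison; the ``pull-back trick'' you describe (pass to $L/K$ with $X(L)\neq\emptyset$ and use restriction--corestriction) is the point-level device used \emph{inside} the proof of Lemma~\ref{klem}, and it cannot control $\Coker\bigl(H^1(K,\Pic_{X/K})\to\prod_v H^1(K_v^h,\Pic_{X/K})\bigr)$. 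Without Lemma~\ref{coker}, your divisibility argument has no starting point. The same diagram, not the Leray sequence over $C$, is also how the paper gets left exactness: in higher relative dimension the stalks of $R^1\pi_*\GG_m$ and $R^2\pi_*\GG_m$ (vertical components, Brauer groups of local models) are not controlled, so your Leray-plus-descent sketch would need substantial extra input before the kernel is ``almost identified'' with $\Sha(\Pic_{X/K})$; the comparison with $\Sha(\Pic^0_{X/K})$ is then Proposition~\ref{twosha}, which you do cite correctly.

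You have also misread the statement, which misdirects your ``main obstacle.'' The first displayed sequence does not end in $\lra 0$: it asserts only that the kernel of $\Br(\CX)\to\Br(X_{\bar K})^{G_K}$ is almost isomorphic to $\Sha(\Pic^0_{X/K})$, so no global cokernel claim, and hence no uniform-in-$\ell$ vanishing, is needed for this theorem. The Fontaine--Laffaille/syntomic uniformity you single out as the principal difficulty---and leave unproven---belongs to the subsequent deduction of Theorem~\ref{mainthm} (Remark~\ref{bigrem} together with Corollary~\ref{lastcor}), where the paper proves surjectivity of $H^2_{\fppf}(\CX_{\CO_v},\mu_{p_v^n})\to H^2(X_{\olsi{K_v}},\mu_{p_v^n})^{G_{K_v}}$ via Fontaine--Messing syntomic cohomology at the unramified places with $p_v\geq 5$, combined with Propositions~\ref{prop2.1} and~\ref{lpart} and a uniform bound on the prime-to-$p_v$ part; this handles, for each good place $v$, the $p_v$-primary part, rather than a per-place Fontaine--Laffaille cutoff on $\ell$ as you describe. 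For the statement actually at hand, your proof is incomplete where it matters (the missing curve pull-back) and overloaded where it does not (the syntomic uniformity).
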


\begin{proof}
By the Hochschild-Serre spectral sequence

$$E_2^{p,q}=H^p(K, H^q(X_{\olsi{K}},\GG_m))\Rightarrow H^{p+q}(X,\GG_m)$$
and the fact $H^3(K,\GG_m)=0$ (cf. \cite[Prop. 2.7]{Mil4}), we get a long exact sequence
$$\Br(K)\lra \Ker(\Br(X)\lra \Br(X_{\olsi{K}})^{G_K})\lra H^1(K, \Pic_{X/K})\lra 0.
$$
It gives an injective natural map 
$$H^1(K, \Pic_{X/K})\lra \Br(X)/\Br(K).$$
Similarly, we get
$$H^1(K_v^h, \Pic_{X/K})\lra \Br(X_{K_v^h})/\Br(K_v^h).$$
Consider the following commutative diagram with exact rows

\begin{displaymath}
\xymatrix{ 
   0\ar[r]&H^1(K, \Pic_{X/K}) \ar[r]\ar[d] &\Br(X)/\Br(K)\ar[r]^b\ar[d]& \Br(X_{\olsi{K}})^{G_K}\ar[d] & \\
 &\prod_{v}H^1(K_v^h, \Pic_{X/K})\ar[r]^-{a}&\prod_{v} \Br(X_{K^h_v})/(\Br(\CX_{\CO_v^{h}})+\Br(K_v^h))\ar[r]&\prod_{v} \Br(X_{\olsi{K}})^{G_v}/\Br(\CX_{\CO_v^{h}}) &}
\end{displaymath}
where $G_v$ denotes $\Gal(\olsi{K}/K^h_v)$. The above diagram gives the following commutative diagram\\
\begin{tikzcd}[column sep=tiny]
0\arrow[r] &H^1(K, \Pic_{X/K}) \arrow[r]\arrow[d,"f"] &\Br(X)/\Br(K)\arrow[r]\arrow[d,"g"]& \Im(b)\arrow[d, "h"]\arrow[r] & 0\\
 0 \arrow[r]&\prod_{v}H^1(K_v^h, \Pic_{X/K})/\Ker(a)\arrow[r]&\prod_{v} \Br(X_{K^h_v})/(\Br(\CX_{\CO_v^{h}})+\Br(K_v^h))\arrow[r]&\prod_{v} \Br(X_{\bar{K}})^{G_v}/\Br(\CX_{\CO_v^{h}})
\end{tikzcd}
By the snake lemma, we get a long exact sequence
$$0\lra \Ker(f)\lra\Ker(g)\lra \Ker(h)\lra \Coker(f).$$
By Lemma \ref{klem}, the natural map
$$\Br(\CX)\lra\Ker(g)$$
has a finite kernel and a finite cokernel. Thus, to prove the theorem, it suffices to show that the natural maps
$$\Sha(\Pic^0_{X/K})\lra \Ker(f)$$
and
	$$\Ker(h)(\ell)\lra\Br(X_{\Bar{K}})^{G_K}(\ell)$$
have finite kernels and cokernels, and that the natural map
$$\Ker(h)\lra \Coker(f)$$
has a finite image. By Lemma \ref{Kera} below, $\Ker(a)$ is of finite exponent. Since there is an exact sequence
$$0\lra \Sha(\Pic_{X/K})\lra \Ker(f) \lra \Ker(a),$$
and $\Ker(f)$ is of cofinite type ( this follows from the fact that $\Ker(g)$ is of cofinite type ), the injective map
$$\Sha(\Pic_{X/K})\lra \Ker(f)$$
has a finite cokernel. By Lemma \ref{twosha}, the natural map 
$$\Sha(\Pic^0_{X/K})\lra\Sha(\Pic_{X/K})$$
has a finite kernel and a finite cokernel. Thus, the map
$$\Sha(\Pic^0_{X/K})\lra \Ker(f)$$
also has a finite kernel and a finite cokernel. Since $\Im(b)$ is of cofinite type, by Corollary \ref{big}, the target of $h$ is a product of finite groups. Hence, $h$ will map the maximal divisible subgroup of $\Im(b)$ to zero. So the divisible part of $\Im(b)$ is contained in $\Ker(h)$. By a theorem of Colliot-Thélène and Skorobogatov \cite{CTS1}, $\Im(b)$ has a finite index in $\Br(X_{\olsi{K}})^{G_K}$. Thus, they have the same maximal divisible subgroup. So we have 
$$\Ker(h)_{\div}=(\Br(X_{\olsi{K}})^{G_K})_{\div}.$$
It follows that the inclusion
$$\Ker(h)(\ell)\hookrightarrow \Br(X_{\olsi{K}})^{G_K}(\ell)$$
has a finite cokernel for any prime $\ell$. It remains to show that the natural map
$$\Ker(h)\lra \Coker(f)$$
has an image of finite exponent. To prove this, we will use the pullback trick. The idea is to find finitely many proper flat morphisms $\pi_i:\CZ_i\lra C$ where each $\CZ_i$ an integral regular scheme of dimension 2, along with $C$-morphisms $\CZ_i\lra \CX$. Then, we use the functoriality of the map $\Ker(h)\lra \Coker(f)$ to get a commutative diagram 
\begin{displaymath}
\xymatrix{\Ker(h)\ar[r]\ar[d] &\Coker(f)\ar[d]\\
	\bigoplus_i\Ker(h_i)\ar[r] &\bigoplus_i\Coker(f_i)}
\end{displaymath}
Since $\Ker(h_i)=0$ ( $\Br(Z_{\olsi{K}})^{G_K}=0$ ), to prove the claim, it is enough to find $\CZ_i$ such that the second column has a kernel of finite exponent. Since $\Ker(a_i)$ is of finite exponent, by the snake lemma, it suffices to to find $\CZ_i$ such that the natural map
$$\Coker(H^1(K, \Pic_{X/K})\rightarrow \prod_{v} H^1(K_v^h, \Pic_{X/K}))\rightarrow \bigoplus_i\Coker(H^1(K, \Pic_{Z_i/K})\rightarrow \prod_{v}H^1(K_v^h, \Pic_{Z_i/K}))$$
has a kernel of finite exponent. By Lemma \ref{coker} below, there exist smooth projective integral curves $Z_i \subset X$ satisfying the above condition. To get $\CZ_i$, we can take the Zariski closure of $Z_i$ in $\CX$ first and then desingularize it. Since such $Z_i$ may not be geometrically connected, in all discussions above, we need to replace $\Br(K)$ by $\Br(L)$ and $\Br(K_v^h)$ by $\Br(L\otimes_K K_v^h)$, where $L$ is the algebraic closure of $K$ in $K(Z_i)$. This completes the proof of the theorem.

\end{proof}
\begin{rem}\label{bigrem}
The above proof actually shows that $\Br(\CX)\lra \Ker(h)$ has a cokernel of finite exponent. Since $\Im(b)$ has a finite index in $\Br(X_{\olsi{K}})^{G_K}$, the natural map
$$\Br(X_{\olsi{K}})^{G_K}/\Br(\CX) \lra \prod_{v} \Br(X_{\olsi{K}})^{G_v}/\Br(\CX_{\CO_v^{h}})$$
has a finite kernel. To complete the proof of Theorem \ref{mainthm}, it suffices to show that for $\ell \gg 0$,
$$(\Br(X_{\olsi{K}})^{G_v}/\Br(\CX_{\CO_v^{h}}))(\ell)=0$$ 
for all finite places $v$. 
\end{rem}

\begin{lem}\label{Kera}
Let $a$ denote the map
$$\prod_{v}H^1(K_v^h, \Pic_{X/K}) \lra \prod_{v} \Br(X_{K^h_v})/(\Br(\CX_{\CO_v^{h}})+\Br(K_v^h)),$$
then $\Ker(a)$ is of finite exponent.
\end{lem}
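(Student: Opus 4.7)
The plan is to analyze $\ker(a_v)$ at each place via a restriction–corestriction reduction to the case where $X$ acquires a rational point. From the Hochschild--Serre spectral sequence $H^p(K_v^h,H^q(X_{\bar K_v},\GG_m))\Rightarrow H^{p+q}(X_{K_v^h},\GG_m)$ and $H^3(K_v^h,\GG_m)=0$, I extract
\[
0\to \Br(K_v^h)\to B_v\to H^1(K_v^h,\Pic_{X/K})\to 0,\qquad B_v:=\ker\bigl(\Br(X_{K_v^h})\to \Br(X_{\bar K_v})^{G_v}\bigr),
\]
so $\ker(a_v)$ is the image in $H^1(K_v^h,\Pic_{X/K})$ of $\Br(\CX_{\CO_v^h})\cap B_v$. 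Pick a finite Galois extension $L/K$ with $X(L)\neq\emptyset$ and a regular proper flat model $\CX^L$ of $X_L$ over $\Spec\CO_L$. Functoriality of the spectral sequence and of pullback yields, for each $v$, the commutative diagram
\begin{displaymath}
\xymatrix{
H^1(K_v^h,\Pic_{X/K})\ar[r]^-{a_v}\ar[d]_-{\mathrm{res}} & \Br(X_{K_v^h})/(\Br(\CX_{\CO_v^h})+\Br(K_v^h))\ar[d]\\
\bigoplus_{w\mid v}H^1(L_w^h,\Pic_{X_L/L})\ar[r]^-{\oplus\,a^L_w} & \bigoplus_{w\mid v}\Br((X_L)_{L_w^h})/(\Br(\CX^L_{\CO_w^h})+\Br(L_w^h))
}
\end{displaymath}
Since $\mathrm{cor}\circ\mathrm{res}$ on $H^1(K_v^h,\Pic_{X/K})$ is multiplication by $[L_w^h:K_v^h]$, which divides $[L:K]$, it suffices to produce a constant $M$ such that $M\cdot\ker(a^L_w)=0$ for every place $w$ of $L$; then $\ker(a)$ is killed by $[L:K]\cdot M$.

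At each $w$ the $L$-rational point of $X_L$ yields a section $s_w:\Spec L_w^h\to (X_L)_{L_w^h}$, and by properness of $\pi^L$ together with the valuative criterion this extends uniquely to an $\CO_w^h$-section $\tilde s_w$ of $\CX^L_{\CO_w^h}$. The splitting induced by $s_w$ gives, for any $\beta\in\ker(a^L_w)$, a canonical lift $\beta'\in B^L_w$ with $s_w^*\beta'=0$. Writing $\beta'=\alpha+\gamma$ with $\alpha\in\Br(\CX^L_{\CO_w^h})$ and $\gamma\in\Br(L_w^h)$ and applying $\tilde s_w^*$, the vanishing $\Br(\CO_w^h)=0$ forces $\tilde s_w^*\alpha=0$, hence $s_w^*\alpha=0$ and then $\gamma=0$. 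So $\beta'=\alpha\in\Br(\CX^L_{\CO_w^h})$, and because $\beta'\in B^L_w$, the class $\alpha$ lies in
\[
K_w:=\ker\bigl(\Br(\CX^L_{\CO_w^h})\to \Br((X_L)_{\bar L_w})^{G_w}\bigr),
\]
which is \emph{finite} by Proposition \ref{geosurj}. Thus $\ker(a^L_w)$ is a subquotient of the finite group $K_w$.

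The main obstacle is producing a bound on the exponent of $K_w$ that is uniform in $w$. For this I intend to apply the Hochschild--Serre spectral sequence to the pro-\'etale cover $\CX^L_{\CO_w^{sh}}\to \CX^L_{\CO_w^h}$, yielding
\[
0\to H^1(G_w^{\mathrm{ur}},\Pic(\CX^L_{\CO_w^{sh}}))\to \Br(\CX^L_{\CO_w^h})\to \Br(\CX^L_{\CO_w^{sh}})^{G_w^{\mathrm{ur}}}.
\]
Proposition \ref{lpart} controls the image of $K_w$ in $\Br(\CX^L_{\CO_w^{sh}})^{G_w^{\mathrm{ur}}}$ with a bound that is uniform in $\ell\neq\Char(k_w)$ once $\pi^L$ is smooth over $\CO_w^h$. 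For the remaining factor, smooth--proper base change at places of good reduction (all but finitely many $w$) identifies $\Pic(\CX^L_{\CO_w^{sh}})$ with the fixed group $\Pic((X_L)_{\bar L})$; Lang's theorem kills $H^1(G_w^{\mathrm{ur}},\Pic^0)$, and $H^1(G_w^{\mathrm{ur}},\NS(X_{L,\bar L}))$ is bounded uniformly in $w$ because the Galois action on $\NS(X_{L,\bar L})$ factors through a fixed finite quotient of $G_L$. The finitely many places of bad reduction each contribute a finite $K_w$, adding only a finite LCM factor to $M$. The delicate point is the $p_w$-primary contribution at residue characteristic $p_w$, which should be handled by the finite-exponent part of Proposition \ref{lpart} in the smooth case together with the fact that only finitely many residue characteristics are involved.
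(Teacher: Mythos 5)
There is a genuine gap, and it sits exactly where you flag it yourself: the uniform bound $M$ on the exponent of $K_w=\Ker\bigl(\Br(\CX^L_{\CO_w^h})\to \Br((X_L)_{\olsi{L_w}})^{G_w}\bigr)$ over \emph{all} places $w$ of $L$. Per-place finiteness from Proposition \ref{geosurj} gives no exponent bound for a product over infinitely many $w$, and the tools you propose cannot supply one. Proposition \ref{lpart} is stated only for primes $\ell\neq \Char(k)$, so it says nothing about the $p_w$-primary part of $K_w$; and your closing claim that ``only finitely many residue characteristics are involved'' is false --- as $w$ runs over all finite places of $L$, \emph{every} rational prime occurs as a residue characteristic, so the $p_w$-parts contribute at infinitely many places with unboundedly many primes. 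This is not a technicality one can wave at: in the paper the analogous $p$-adic uniformity (for the \emph{cokernel} $\Br(X_{\olsi{K_v}})^{G_{K_v}}/\Br(\CX_{\CO_v})$ at $p_v$, for almost all $v$) is precisely the hard input requiring Fontaine--Messing/syntomic theory (the final Proposition and Corollary \ref{lastcor}), and no corresponding uniform statement for your kernels $K_w$ is available in the paper. A secondary but also genuine gap: you invoke a regular proper flat model $\CX^L$ of $X_L$ over $\Spec\CO_L$, but resolution of singularities for arithmetic schemes is open once $\dim\CX^L\geq 4$; regularity is essential to your use of Proposition \ref{geosurj} and of the injectivity $\Br(\CX^L_{\CO_w^h})\hookrightarrow \Br(X_{L_w^h})$.

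For contrast, the paper's proof of Lemma \ref{Kera} bypasses all local Brauer-kernel estimates. It uses the pull-back trick: by Yuan's result there are smooth projective geometrically connected curves $Z_i\subset X$ and an abelian variety $A/K$ with a $G_K$-equivariant map $\Pic(X_{\olsi{K}})\times A(\olsi{K})\to \bigoplus_i\Pic(Z_{i,\olsi{K}})$ whose kernel and cokernel are killed by a single integer $N$ independent of $v$. Taking regular models $\CZ_i$ of the closures of the $Z_i$ (only $2$-dimensional, so Lipman's resolution applies --- this is how the paper avoids your model-existence issue), Milne's vanishing $\Br(\CZ_{i,\CO_v^h})=0$ for regular proper arithmetic surfaces over henselian DVRs makes the curve-level map $H^1(K_v^h,\Pic_{Z_i/K})\to \Br(Z_{i,K_v^h})/(\Br(\CZ_{i,\CO_v^h})+\Br(K_v^h))$ injective at every $v$, whence $\Ker(a_v)$ is killed by $N^2$ uniformly. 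If you want to salvage your route, you would need either a uniform-in-$w$ exponent bound on $K_w$ including the $p_w$-part (a statement of Fontaine--Laffaille type for kernels, not proved in the paper) or to re-route through curves as the paper does, at which point your restriction--corestriction scaffolding becomes unnecessary.
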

\begin{proof}
By Lemma \ref{thekeytrick} (ii), there exist smooth projective integral curves $Z_i\subset X$ over $K$ and a continuous $G_K$-module $B$ with a $G_K$-equivariant map $B\ra\oplus_{i=1}^{m}\Pic(Z_{i,\olsi{K}})$ such that the induced $G_K$-equivariant map
$$\Pic(X_{\olsi{K}})\times B \lra \bigoplus_i\Pic(Z_{i,\olsi{K}})$$
has a kernel and a cokernel killed by some positive integer $N$. Taking the Zariski closure of $Z_i$ in $\CX$ and then desingularizing it, we get a $C$-morphism $\CZ_i\lra \CX$. Let $L_i$ denote the algebraic closure of $K$ in $K(Z_i)$. This gives a commutative diagram
\begin{displaymath}
\xymatrix{ 
H^1(K_v^h, \Pic_{X/K})\ar[r]^-{a_v}\ar[d]& \Br(X_{K^h_v})/(\Br(\CX_{\CO_v^{h}})+\Br(K_v^h))\ar[d]\\
\bigoplus_i H^1(K_v^h, \Pic_{Z_i/K})\ar[r]&\bigoplus_{v} \Br(Z_{i,K^h_v})/(\Br(\CZ_{i,\CO_v^{h}})+\Br(L_i\otimes_K K_v^h))}
\end{displaymath}
Since $\Br(\CZ_{i, \CO_v^{h}})=0$ (cf. \cite[Lem. 2.6]{Mil4}), by definition, the second row is injective. It follows that $\ker(a_v)$ is contained in the kernel of the first column, which is a subgroup of the kernel of the map
$$H^1(K_v^h, \Pic_{X/K})\oplus H^1(K, B)\lra \bigoplus_i H^1(K_v^h, \Pic_{Z_i/K}).$$
By the long exact sequence in Galois cohomology, the kernel of this map is killed by $N^2$. So $N^2\Ker(a)=0$.
\end{proof}
\begin{lem}\label{coker}
Let $X$ be a smooth projective geometrically connected variety over a number field $K$. Then, there exist smooth projective integral curves $Z_i \subset X$ such that the induced map
$$\Coker(H^1(K, \Pic_{X/K})\rightarrow \prod_{v} H^1(K_v^h, \Pic_{X/K}))\rightarrow \bigoplus_i\Coker(H^1(K, \Pic_{Z_i/K})\rightarrow \prod_{v}H^1(K_v^h, \Pic_{Z_i/K}))$$
has a kernel of finite exponent.
\end{lem}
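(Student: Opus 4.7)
The plan is to invoke the pull-back construction of Yuan already used in Lemma \ref{Kera} and then combine it with a diagram chase in Galois cohomology. By \cite[\S 3.3]{Yua}, we may choose smooth projective geometrically connected curves $Z_1,\dots,Z_r \subset X$ together with an abelian variety $A/K$ and a morphism $A \to \bigoplus_i \Pic^0_{Z_i/K}$ so that the resulting $G_K$-equivariant homomorphism
\[
\varphi:\ \Pic(X_{\olsi{K}}) \times A(\olsi{K}) \longrightarrow \bigoplus_{i} \Pic(Z_{i,\olsi{K}})
\]
has kernel and cokernel killed by some positive integer $N$. Set $P = \Pic_{X/K} \times A$ and $Q = \bigoplus_i \Pic_{Z_i/K}$; these are smooth commutative group $K$-schemes whose $\olsi{K}$-points, as $G_K$-modules, are related by $\varphi$.

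Splitting $\varphi$ as $P \twoheadrightarrow \Im(\varphi) \hookrightarrow Q$ and taking the long exact sequences of Galois cohomology over $K$ and over each $K_v^h$, the fact that $\ker\varphi$ and $\coker\varphi$ are killed by $N$ yields that the induced maps
\[
u:\ H^1(K, P) \longrightarrow H^1(K, Q), \qquad w:\ \prod_v H^1(K_v^h, P) \longrightarrow \prod_v H^1(K_v^h, Q)
\]
each have kernel and cokernel killed by $N^2$. Writing $\alpha_Y$ for the local-global map $H^1(K, Y) \to \prod_v H^1(K_v^h, Y)$, a direct diagram chase (in the spirit of the snake lemma) in the commutative square with horizontal maps $u, w$ and vertical maps $\alpha_P, \alpha_Q$ shows that the induced map $\Coker(\alpha_P) \to \Coker(\alpha_Q)$ has kernel killed by $N^4$.

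To conclude, I use that the index set $\{1,\dots,r\}$ is finite, so that $\Coker(\alpha_Q) = \bigoplus_i \Coker(\alpha_{Z_i})$, and that $P$ is a direct product, so $\Coker(\alpha_P) = \Coker(\alpha_X) \oplus \Coker(\alpha_A)$, where $\alpha_X$ denotes the local-global map for $\Pic_{X/K}$. Restricting the map $\Coker(\alpha_P) \to \Coker(\alpha_Q)$ to the direct summand $\Coker(\alpha_X)$ produces exactly the map in the statement of the lemma, and its kernel is killed by $N^4$. The only nontrivial input is Yuan's structural result on the existence of the curves $Z_i$ and the abelian variety $A$, which I treat as a black box; the rest is bookkeeping and a routine diagram chase, so I foresee no serious obstacle beyond checking that the finite direct sum in $i$ commutes with the infinite product over places $v$, which is automatic.
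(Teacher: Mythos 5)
Your proposal is correct and follows essentially the same route as the paper: it invokes Yuan's curves $Z_i$ and abelian variety $A$ with the map $\Pic(X_{\olsi{K}})\times A(\olsi{K})\to\bigoplus_i\Pic(Z_{i,\olsi{K}})$ having kernel and cokernel killed by $N$, deduces that the induced maps on $H^1$ (globally and over each $K_v^h$) have kernel and cokernel killed by $N^2$, and concludes by a snake-lemma chase that the map on cokernels of the local-global maps has kernel killed by $N^4$, exactly as in the paper. Your final step of restricting to the direct summand $\Coker(\alpha_X)$ is the same use of compatibility of the cokernel functor with direct sums that the paper states explicitly, so there is no gap.
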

\begin{proof}
We will use the similar argument as in the proof of the above lemma. Let $Z_i$ and $B$ as in the above lemma.  There is a $G_K$-equivariant map
$$\Pic(X_{\olsi{K}})\times B \lra \bigoplus_i\Pic(Z_{i,\olsi{K}})$$
with a kernel and a cokernel killed by some positive integer $N$. Thinking
$$\Coker(H^1(K, -)\lra \prod_{v} H^1(K_v^h, -))$$
as a functor, it commutes with finite direct sums. Thus, it suffices to show that for any morphism of continuous $G_K$-modules $P_1\lra P_2$ with a kernel and cokernel killed by $N$, the induced map
$$\Coker(H^1(K, P_1)\rightarrow \prod_{v} H^1(K_v^h, P_1))\rightarrow \Coker(H^1(K, P_2)\rightarrow \prod_{v}H^1(K_v^h, P_2))$$
has a kernel of finite exponent. Consider the following commutative diagram
\begin{displaymath}
\xymatrix{ 
&H^1(K, P_1)\ar[r]^a\ar[d]^f& H^1(K, P_2)\ar[d]^g\ar[r]&\Coker(a)\ar[r] \ar[d]^{\bar{g}}& 0\\
\Ker(b)\ar[r] &\prod_{v} H^1(K_v^h, P_1)\ar[r]^b &\prod_{v} H^1(K_v^h, P_2)\ar[r]&\Coker(b)&}
\end{displaymath}
By the assumption, $\Ker(b)$ and $\Coker(a)$ are killed by $N^2$. By the snake lemma, there is an exact sequence
$$\Ker(\bar{g})\lra \Coker(f)/\Ker(b)\lra \Coker(g).
$$
It follows that $\Coker(f)\lra \Coker(g)$ has a kernel killed by $N^4$.
\end{proof}
\bigskip

By Remark \ref{compl}, we can replace the Henselain local ring with its completion in the proof of Theorem \ref{almostdonethm}. If we exclude finitely many places, the map $\CX_{\CO_v} \lra \Spec \CO_v$ will be a smooth projective morphism. Using the pullback trick as in the proof of Proposition \ref{plocalsurj}, one can show that
$$\Br(\CX_{\CO_v})\lra\Br(\CX_{\CO_v^{sh}})^{G_{k(v)}}$$
has a cokernel killed by some positive integer independent of $v$. In conjunction with Lemma \ref{lpart}, this implies that for all but finitely many $v$, the natural map
$$\Br(\CX_{\CO_v})(\non p_v)\lra\Br(X_{\olsi{K_v}})^{G_{K_v}}(\non p_v)$$
has a cokernel killed by some positive integer independent of $v$, where $p_v$ denotes the characteristic of the residue field at $v$. Thus, to prove Theorem \ref{mainthm}, by Remark \ref{bigrem}, it suffices to show that the natural map
$$\Br(\CX_{\CO_v})(p_v)\lra \Br(X_{\olsi{K_v}})^{G_{K_v}}(p_v)$$
is surjective for all but finitely many $v$.  We will establish this in the Corollary \ref{lastcor} below. To prove Corollary \ref{lastcor},
we need an integral version of the local invariant cycle theorem.

\begin{prop}
Let $K/\QQ_p$ be an unramified finite extension, and let $\pi: \CX\lra \Spec \CO_K$ be a smooth projective morphism with generic fiber $X$. Assuming $p\geq 5$, then the natural map
$$H^2_{\fppf}(\CX, \mu_{p^n})\lra H^2(X_{\olsi{K}},\mu_{p^n})^{G_K}
$$
is surjective for any $n\geq 1$.
\end{prop}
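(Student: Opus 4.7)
The plan is to use Fontaine-Laffaille-Messing theory, as indicated by the reference in the introduction. The key structural features are that $K/\QQ_p$ is \emph{unramified}, $\CX\to\Spec\CO_K$ is \emph{smooth proper}, and the Hodge-Tate weights of $H^2(X_{\olsi{K}},\QQ_p(1))$ lie in $[-1,1]$, which fits (up to a Tate twist) inside the Fontaine-Laffaille window $[0,p-2]$ as soon as $p\geq 5$.

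First I would introduce the Fontaine-Messing syntomic complex $\mathcal{S}_n(1)$ on $\CX_{\et}$, built as the mapping fibre of $1-\phi/p$ acting on the filtered mod-$p^n$ crystalline complex of $\CX$. It comes equipped with two natural maps: a Chern-class/symbol map $\mathcal{S}_n(1)\to \mu_{p^n}[-1]$ on $\CX_{\fppf}$ yielding
$$H^2(\CX_{\et},\mathcal{S}_n(1))\lra H^2_{\fppf}(\CX,\mu_{p^n}),$$
and the Fontaine-Messing comparison $\mathcal{S}_n(1)\to \tau_{\leq 1}Rj_*\mu_{p^n}$ for $j:X\hookrightarrow\CX$ yielding $H^2(\CX_{\et},\mathcal{S}_n(1))\to H^2(X,\mu_{p^n})$. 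Composing with the edge map from the Hochschild-Serre spectral sequence one obtains a factorization of the desired map through syntomic cohomology.

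The second main step is to identify the image of $H^2(\CX_{\et},\mathcal{S}_n(1))$ in $H^2(X_{\olsi{K}},\mu_{p^n})^{G_K}$ with the \emph{crystalline} $G_K$-invariant part, and then to invoke Fontaine-Laffaille to conclude that \emph{every} mod-$p^n$ $G_K$-invariant class is already crystalline. Concretely, Fontaine-Laffaille (available for $p\geq 5$ in our Hodge range) gives an equivalence between torsion Fontaine-Laffaille modules over $W(k)$ and $G_K$-stable $\ZZ_p$-lattices in crystalline representations of small Hodge-Tate weights; under this equivalence, $H^2(X_{\olsi{K}},\mu_{p^n})^{G_K}$ is identified with $(M/p^n)^{\phi=1,\,\mathrm{Fil}^0}$ for $M=H^2_{\cris}(\CX_0/W)(1)$, and the latter computes (up to known boundary contributions) the syntomic cohomology $H^2(\CX_{\et},\mathcal{S}_n(1))$.

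The main obstacle is the \emph{integral} (rather than rational) syntomic-\'etale comparison in the precise range $i=2$, $r=1$: the clean quasi-isomorphism $\mathcal{S}_n(r)\simeq \tau_{\leq r}Rj_*\mu_{p^n}^{\otimes r}$ only covers $i\leq r$, and one genuinely needs $p\geq 5$ to push past the boundary using the Fontaine-Messing/Kurihara-Kato theorem in the range $i\leq r+p-3$. I expect the technical crux to be verifying that this comparison is \emph{surjective} on $H^2$, and matching $(M/p^n)^{\phi=1,\,\mathrm{Fil}^0}$ with the syntomic group on the nose; both of these are encompassed by the Fontaine-Laffaille-Messing results cited in the introduction, so my strategy is to extract them explicitly from there.
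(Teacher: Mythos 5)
Your proposal follows essentially the same route as the paper's proof: factor the map through Fontaine--Messing syntomic cohomology $H^2((\CX_n)_{syn},\CS_n(1))$, which computes $H^2_{\fppf}(\CX,\mu_{p^n})$ (Kurihara, Sato), use the defining triangle $\CS_n(1)\to\CJ_n^{[1]}\xrightarrow{1-\varphi_1}\CO_n^{cr}$ to surject onto $(F^1M_n^2)^{\varphi_1=1}$, and apply the Fontaine--Laffaille isomorphism $\alpha_{1,M_n^2}\colon (F^1M_n^2)^{\varphi_1=1}\cong H^2(X_{\olsi{K}},\ZZ/p^n(1))^{G_K}$ together with its compatibility with the Fontaine--Messing map, valid exactly because $[0,2]\subseteq[0,p-1[$ when $p\geq 5$ --- which is what your $(M/p^n)^{\phi=1,\,\mathrm{Fil}^0}$ identification and your ``boundary contributions'' caveat amount to (surjectivity needs only the long exact sequence, not an identification on the nose). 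The single cosmetic slip is the shift in your symbol map: on the flat site one has $\CS_n(1)\simeq(\GG_m\otimes^{L}\ZZ/p^n)[-1]\simeq\mu_{p^n}$, with no residual $[-1]$.
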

\begin{proof}
Let $\CX_n$ denote the $\CO_K/p^n$-scheme $\CX\otimes_{\ZZ}\ZZ/p^n$. Fontaine and Messing \cite[III. 3.1]{FM} defined a sheaf $\CS_n(r)$ on the small syntomic site $(\CX_n)_{syn}$ of $\CX_n$. Following definitions and notations in \cite[\S 2.1]{EN}, for $0\leq r\leq p-1$, define
$$\CS_n(r):=\Ker(\CJ_n^{[r]}\stackrel{1-\varphi_r}{\lra}\CO_n^{cr}).$$
There is a short exact sequence of sheaves on $(\CX_n)_{syn}$
\begin{equation}
0 \longrightarrow \CS_n(r) \longrightarrow \CJ_n^{[r]} \stackrel{1-\varphi_r}{\longrightarrow} \CO_n^{cr} \longrightarrow 0. \label{theshort}
\end{equation}
For our purpose, we only need to consider the case with $r=1$. The syntomic cohomology $H^i((\CX_n)_{syn},\CS_n(1))$ computes the flat cohomology
$H^i_{\fppf}(\CX, \mu_{p^n})$ (cf. \cite{Kur} and \cite[\S 1.4 and \S 4.5]{Sat} ). The natural map 
$$H^i_{\fppf}(\CX,\mu_{p^n}) \lra H^i(X,\mu_{p^n})$$
is compatible with the Fontaine-Messing map 
$$H^i((\CX_n)_{syn},\CS_n(1))\lra H^i(X,\mu_{p^n}).$$
Thus, it suffices to show that the following map induced by the Fontaine-Messing map 
$$H^2((\CX_n)_{syn},\CS_n(1))\lra H^2(X_{\olsi{K}},\mu_{p^n})^{G_K}$$
is surjective for any $n\geq 1$.\\
Let $Y$ denote the special fiber $\CX_1$. Following notations in \cite[III. 4.10]{Nek}, set
$$M^2_n:=H^2((\CX_n)_{syn},\CO_n^{cr})= H^2((Y/W_n)_{\cris},\CO_{Y/W_n})=H^2_{dR}(\CX_n/W_n),$$
$$F^rM_n^2:=H^2((\CX_n)_{syn},\CJ_n^{[r]})=H^2((\CX_n)_{\zar}, \sigma_{\geq r}\Omega^\bullet_{\CX_n/W_n}), \quad T_n^2:=H^2(X_{\olsi{K}}, \ZZ/p^n\ZZ).$$
By \cite[II. 2.7]{FM}, $(M^2_n, F^rM_n^2, \varphi_r)$ defines an object of  $MF_{W, tors}^{[0,2]}\subseteq MF_{W, tors}^{[0,p-1[}$ (cf. \cite[III. Prop. 4.11]{Nek} or \cite[Thm. 3.2.3]{BM} for details ). For $0\leq r<p-1$, by \cite[III. 4.8]{Nek},
the functor $T$ (cf. \cite[III. 4.6]{Nek} for the definition) induces an isomorphism
$$\alpha_{r,M^2_n}:(F^rM^2_n)^{\varphi_r=1} \lra (T_n^2(r))^{G_K}.$$
Taking cohomology for (\ref{theshort}), we get a long exact sequence
$$H^2((\CX_n)_{syn},\CS_n(1))\lra H^2((\CX_n)_{syn},\CJ_n^{[1]}) \stackrel{1-\varphi_1}\lra H^2((\CX_n)_{syn},\CO_n^{cr})$$
$$\lra H^3((\CX_n)_{syn},\CS_n(1)).$$
It gives a surjective map
$$H^2((\CX_n)_{syn},\CS_n(1))\lra H^2((\CX_n)_{syn},\CJ_n^{[1]})^{\varphi_1=1}.$$
By \cite[III. Thm. 5.2]{Nek}, there is a commutative diagram
\begin{displaymath}
\xymatrix{
H^2((\CX_n)_{syn},\CS_n(1)) \ar[r]\ar[d]^\nu& H^2((\CX_n)_{syn},\CJ_n^{[1]})^{\varphi_1=1}\ar[d]^{\alpha_{1,M_n^2}}\\
H^2(X,\ZZ/p^n\ZZ(1))\ar[r]& H^2(X_{\olsi{K}},\ZZ/p^n\ZZ(1))^{G_K}}
\end{displaymath}
where $\nu$ denotes the Fontaine-Messing map. Since the first row is surjective and $\alpha_{1,M_n^2}$ is an isomorphism, the natural map
$$H^2((\CX_n)_{syn},\CS_n(1))\lra H^2(X_{\olsi{K}},\mu_{p^n})^{G_K}$$
is surjective. This completes the proof.
\end{proof}

\begin{cor}\label{lastcor}
Let $\pi:\mathcal{X}\longrightarrow \Spec \CO_K$ be a proper flat morphism, where $K$ is a number field. Assume that $\mathcal{X}$ is regular and the generic fiber $X$ of $\pi$ is projective and geometrically connected over $K$. For any finite place $v$, let $p_v$ denote the characteristic of the residue field of $\CO_K$ at $v$. Then, for all but finitely many places $v$, the natural map
$$\Br(\CX_{\CO_v})(p_v)\lra \Br(X_{\olsi{K_v}})^{G_{K_v}}(p_v)$$
is surjective.
\end{cor}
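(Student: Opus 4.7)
The plan is to deduce the corollary from the Fontaine-Messing surjectivity of the preceding proposition together with the $G_{K_v}$-equivariant splitting of Proposition~\ref{prop2.1}(b), then transfer the resulting surjectivity to Brauer groups via the Kummer exact sequences on both sides.

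First, I would discard a finite set $S$ of finite places of $K$, consisting of those $v$ for which any of the following fails: (a) $\pi_v:\CX_{\CO_v}\to\Spec\CO_v$ is smooth and projective---since $\CX$ is regular and $\Char K=0$, the generic fiber $X/K$ is smooth projective, and its locus of bad reduction in $\Spec\CO_K$ is a proper closed subset, so this excludes only finitely many $v$; projectivity of $\pi_v$ follows because a proper flat morphism over the Dedekind scheme $\Spec\CO_v$ with projective generic fiber is projective; (b) $K_v/\QQ_{p_v}$ is unramified (only finitely many primes of $K/\QQ$ ramify); (c) $p_v\geq 5$ (only finitely many places lie above $2$ and $3$); (d) the Kummer extension
$$0\to\NS(X_{\olsi{K_v}})/p_v^n\to H^2(X_{\olsi{K_v}},\mu_{p_v^n})\to\Br(X_{\olsi{K_v}})[p_v^n]\to 0$$
splits as a sequence of $G_{K_v}$-modules for every $n\geq 1$. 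Since $\NS(X_{\olsi{K_v}})\cong\NS(X_{\olsi{K}})$ and $\Br(X_{\olsi{K_v}})\cong\Br(X_{\olsi{K}})$ via flat base change, the ``sufficiently large $\ell$'' threshold of Proposition~\ref{prop2.1}(b) depends only on the fixed variety $X/K$ and not on $v$, so (d) excludes only finitely many $v$.

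For $v\notin S$, write $p=p_v$. The preceding proposition applied to $\pi_v$ yields a surjection $H^2_{\fppf}(\CX_{\CO_v},\mu_{p^n})\twoheadrightarrow H^2(X_{\olsi{K_v}},\mu_{p^n})^{G_{K_v}}$ for each $n\geq 1$. By (d), taking $G_{K_v}$-invariants preserves the splitting of the Kummer sequence, so $H^2(X_{\olsi{K_v}},\mu_{p^n})^{G_{K_v}}\twoheadrightarrow\Br(X_{\olsi{K_v}})^{G_{K_v}}[p^n]$ is also surjective. Combined with the Kummer surjection $H^2_{\fppf}(\CX_{\CO_v},\mu_{p^n})\twoheadrightarrow\Br(\CX_{\CO_v})[p^n]$, the base-change commutative square
\begin{displaymath}
\xymatrix{
H^2_{\fppf}(\CX_{\CO_v},\mu_{p^n})\ar@{->>}[r]\ar@{->>}[d] & \Br(\CX_{\CO_v})[p^n]\ar[d]\\
H^2(X_{\olsi{K_v}},\mu_{p^n})^{G_{K_v}}\ar@{->>}[r] & \Br(X_{\olsi{K_v}})^{G_{K_v}}[p^n]}
\end{displaymath}
has three surjective arrows, forcing the right vertical arrow to be surjective for every $n$. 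Taking the filtered colimit in $n$ (which commutes with forming $(-)[p^n]$ and with $G_{K_v}$-invariants on $p$-primary torsion) yields the desired surjectivity $\Br(\CX_{\CO_v})(p_v)\twoheadrightarrow\Br(X_{\olsi{K_v}})^{G_{K_v}}(p_v)$.

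The main obstacle is really the uniformity in (d): one has to confirm that the ``sufficiently large $\ell$'' threshold of Proposition~\ref{prop2.1}(b), applied simultaneously to the infinite family of local varieties $X_{K_v}$, depends only on the fixed global $X/K$---this is what allows the threshold to cut out only finitely many $v$. Once this uniformity is established via the identification of $\NS$ and $\Br$ over $\olsi{K}$ and $\olsi{K_v}$, the remainder is a formal diagram chase combining the Fontaine-Messing surjection, the split Kummer sequence, and the Kummer surjection on $\CX_{\CO_v}$.
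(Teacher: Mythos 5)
Your proposal is correct and follows essentially the same route as the paper: the paper's proof is exactly the combination of the Fontaine--Messing proposition, the surjection $H^2(X_{\olsi{K_v}},\mu_{p_v^n})^{G_{K_v}}\twoheadrightarrow\Br(X_{\olsi{K_v}})^{G_{K_v}}[p_v^n]$ coming from Proposition \ref{prop2.1}(b), and the same commutative square, and your explicit check that the ``sufficiently large $\ell$'' threshold in Proposition \ref{prop2.1}(b) is geometric (hence uniform in $v$) makes precise a point the paper leaves implicit. One local justification in your step (a) is false as stated, although the fact you need is standard: a proper flat morphism over a DVR with projective generic fiber need \emph{not} be projective (there are smooth proper degenerations whose special fiber is a Hironaka-type non-projective threefold); the correct argument is to spread out from the generic fiber globally, using regularity of $\CX$ to extend an ample line bundle on $X$ to a line bundle $\CL$ on $\CX$, whose relative ampleness then holds over a dense open $U\subseteq\Spec\CO_K$ (EGA IV, 8.10.5), so that $\pi_v$ is smooth and projective for all $v\in U$ outside the bad-reduction locus.
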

\begin{proof}
$K/\QQ$ only ramifies at finitely many places. Thus, by the proposition above,
for all but finitely many $v$, the natural map
$$H^2_{\fppf}(\CX_{\CO_v}, \mu_{p_v^n})\lra H^2(X_{\olsi{K_v}},\mu_{p_v^n})^{G_{K_v}}
$$
is surjective for any $n\geq 1$. By Proposition \ref{prop2.1}, for all but finitely many $v$, the natural map
$$H^2(X_{\olsi{K_v}},\mu_{p_v^n})^{G_{K_v}} \lra \Br(X_{\olsi{K_v}})^{G_{K_v}}[p_v^n]$$
is surjective for any $n\geq 1$. By the following commutative diagram
\begin{displaymath}
\xymatrix{
 H^2_{\fppf}(\CX_{\CO_v},\mu_{p^n_v})\ar[r]\ar[d] & \Br(\CX_{\CO_v})[p^n_v] \ar[d]\\
H^2(X_{\olsi{K_v}},\mu_{p^n_v})^{G_{K_v}}\ar[r]  &  \Br(X_{\olsi{K_v}})^{G_{K_v}}[p^n_v],}
\end{displaymath}
 for all but finitely many $v$, the natural map
$$\Br(\CX_{\CO_v})[p^n_v]\lra \Br(X_{\olsi{K_v}})^{G_{K_v}}[p^n_v]$$
is surjective for any $n\geq 1$.

\end{proof}

\begin{rem}
By  Theorem \ref{0cycle}, the corollary implies that  for all but finitely many places $v$, the $p_v$-primary part of the kernel of the natural map
$$\Hom(\Br(X_{K_v})/\Br(K)+\Br(\CX_{\CO_v}), \QQ/\ZZ)\lra \mathrm{Alb}_X(K_v)$$
is trivial.
\end{rem}

\section{Applications}
\subsection{Reduction of Artin's question}
\begin{lem}
Assuming that $\Br(X_{\olsi{K}})^{G_K}$ is finite for all smooth projective geometrically connected surfaces over a number field $K$, then $\Br(X_{\olsi{K}})^{G_K}$ is finite for all smooth projective geometrically connected varieties over $K$.
\end{lem}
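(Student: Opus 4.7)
The plan is to argue by descending induction on $d = \dim X$, reducing the case $d \geq 3$ to dimension $d - 1$ via a smooth ample hyperplane section until we reach the base case $d = 2$ supplied by hypothesis. This is essentially Ambrosi's reduction \cite[Cor. 1.6.2.1]{Amb}, which rests on Andr\'e's theorem \cite{And} on motivated cycles. Enlarging the base field is harmless: for any finite extension $L/K$, we have $\Br(X_{\olsi{K}}) = \Br(X_{\olsi{L}})$ and $[G_K:G_L] < \infty$, so $\Br(X_{\olsi{K}})^{G_K} \subseteq \Br(X_{\olsi{L}})^{G_L}$ and finiteness descends. After such an extension, Bertini produces a smooth geometrically connected ample hypersurface $\iota\colon Y \hookrightarrow X$ of dimension $d - 1$.

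For the inductive step, I assume $d \geq 3$ and that $\Br(Y_{\olsi{K}})^{G_K}$ is finite. The weak Lefschetz theorem gives injectivity of $\iota^*\colon H^2(X_{\olsi{K}}, \QQ_\ell(1)) \to H^2(Y_{\olsi{K}}, \QQ_\ell(1))$ for every prime $\ell$, and Andr\'e's motivated Lefschetz theorem upgrades this to a Galois-equivariant, N\'eron--Severi-preserving section (arising from Andr\'e's motivated inverse of the Lefschetz operator composed with the Gysin map $\iota_*$). Quotienting by the algebraic part in the exact sequence of Proposition \ref{prop2.1}(a), this descends to a Galois-equivariant splitting of the restriction $V_\ell\Br(X_{\olsi{K}}) \hookrightarrow V_\ell\Br(Y_{\olsi{K}})$, so taking $G_K$-invariants exhibits $V_\ell\Br(X_{\olsi{K}})^{G_K}$ as a direct summand of $V_\ell\Br(Y_{\olsi{K}})^{G_K}$. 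Finiteness of $\Br(Y_{\olsi{K}})^{G_K}$ forces $V_\ell\Br(Y_{\olsi{K}})^{G_K} = 0$, hence $V_\ell\Br(X_{\olsi{K}})^{G_K} = 0$, which by cofinite type gives finiteness of $\Br(X_{\olsi{K}})^{G_K}(\ell)$ for each individual $\ell$.

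The main obstacle is establishing the stronger vanishing $\Br(X_{\olsi{K}})^{G_K}(\ell) = 0$ for all but finitely many $\ell$, which is what is needed to conclude finiteness of the whole group rather than merely of each $\ell$-primary part. This requires integral (rather than merely $\QQ_\ell$-rational) control in Andr\'e's motivated section, and is precisely the content of Ambrosi \cite[Cor. 1.6.2.1]{Amb}, which couples Andr\'e's construction with the integral statement in Proposition \ref{prop2.1}(b) to obtain an $\ell$-primary inclusion $\Br(X_{\olsi{K}})^{G_K}(\ell) \hookrightarrow \Br(Y_{\olsi{K}})^{G_K}(\ell)$ for all but finitely many $\ell$. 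Iterating the hyperplane section argument $d - 2$ times reduces to $\dim X = 2$, completing the proof.
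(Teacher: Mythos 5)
Your proposal is correct and follows essentially the same route as the paper's proof: descending induction on dimension through an Ambrosi-chosen smooth hyperplane section, with Proposition~\ref{prop2.1}(a) plus weak Lefschetz giving finiteness of each $\ell$-primary part and Proposition~\ref{prop2.1}(b) handling all but finitely many $\ell$. One small caveat: \cite[Cor.~1.6.2.1]{Amb} only furnishes a hyperplane section $D$ with $\NS(X)\otimes_\ZZ\QQ \to \NS(D)\otimes_\ZZ\QQ$ an isomorphism, so the almost-all-$\ell$ inclusion $\Br(X_{\olsi{K}})^{G_K}(\ell)\hookrightarrow \Br(D_{\olsi{K}})^{G_K}(\ell)$ is not ``precisely its content'' but must be derived, as the paper does, by combining this $\NS$-isomorphism (which becomes an isomorphism on $\NS\otimes_\ZZ\QQ_\ell/\ZZ_\ell$ for large $\ell$) with the weak Lefschetz theorem for torsion coefficients and the split sequences of Proposition~\ref{prop2.1}(b) via the snake lemma---the same snake-lemma mechanism also shows that your Galois-equivariant motivated splitting, while available by Andr\'e, is more than is needed, since left exactness of $G_K$-invariants plus the $\NS$-isomorphism already yields the required injectivity.
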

\begin{proof}
Let $X$ be a smooth projective geometrically connected variety over a number field $K$. Assuming that $\dim(X)>2$, by \cite[Cor. 1.6.2.1]{Amb}, there exists a smooth projective geometrically connected hyperplane section $D$ of $X$ such that the induced map
$$\NS(X)\otimes_\ZZ \QQ\lra\NS(D) \otimes_\ZZ \QQ$$
is an isomorphism. By Proposition \ref{prop2.1}, for any prime $\ell$, there is a commutative diagram with exact rows
 \begin{displaymath}
\xymatrix{
 0 \ar[r] & \NS(X)\otimes_{\mathbb{Z}}\mathbb{Q}_\ell \ar[r]\ar[d] &H^2(X_{\olsi{K}},\mathbb{Q}_\ell(1))^{G_K}\ar[r] \ar[d] & V_\ell \Br(X_{\olsi{K}})^{G_K} \ar[r]\ar[d] & 0
\\
0 \ar[r] & \NS(D)\otimes_{\mathbb{Z}}\mathbb{Q}_\ell \ar[r] &H^2(D_{\olsi{K}},\mathbb{Q}_\ell(1))^{G_K}\ar[r] & V_\ell \Br(D_{\olsi{K}})^{G_K} \ar[r] & 0}
\end{displaymath}
Since the first column is an isomorphism and the second column is injective ( the weak Lefschetz theorem), by the snake lemma, the third column is injective. This implies that for any $\ell$, the $\ell$-primary part of the kernel of the map
$$\Br(X_{\olsi{K}})^{G_K} \lra\Br(D_{\olsi{K}})^{G_K}$$
is finite. Since $\NS(X)$ and $\NS(D)$ are finitely generated abelian groups, thus for sufficiently large $\ell$, the induced map
$$\NS(X)\otimes_\ZZ\mathbb{Q}_\ell/\mathbb{Z}_\ell\lra\NS(X)\otimes_\ZZ\mathbb{Q}_\ell/\mathbb{Z}_\ell$$
is an isomorphism. By the weak Lefschetz theorem, the induced map
$$H^2(X_{\olsi{K}},\mathbb{Q}_\ell/\ZZ_\ell(1))\lra H^2(D_{\olsi{K}},\mathbb{Q}_\ell/\ZZ_\ell(1))$$
is injective. By Proposition \ref{prop2.1}, we have a similar diagram as before for torsion coefficients, when $\ell$ is sufficently large. Thus, natural map
$$\Br(X_{\olsi{K}})^{G_K}(\ell) \lra\Br(D_{\olsi{K}})^{G_K}(\ell)$$
is injective for all but finitely many $\ell$. This proves that the kernel of
$$\Br(X_{\olsi{K}})^{G_K} \lra\Br(D_{\olsi{K}})^{G_K}$$
is finite. By the induction on the dimension of $X$, the claim follows.
\end{proof}
\begin{thm}
Assuming that $\Br(X_{\bar{K}})^{G_K}$ is finite for all smooth projective surfaces and $\Sha(\Pic^0_{X/K})$ is finite for all smooth projective geometrically connected curves, then $\Br(\CX)$ is finite for all regular proper flat schemes $\CX$ over $\ZZ$.
\end{thm}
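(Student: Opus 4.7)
The plan is to reduce, via Stein factorization and Theorem~\ref{mainthm}, the finiteness of $\Br(\CX)$ to the two hypothesized finiteness statements. Given a regular proper flat $\CX$ over $\Spec\ZZ$, the ring $\Gamma(\CX,\CO_\CX)$ is a normal domain finite over $\ZZ$, hence equal to $\CO_K$ for some number field $K$. The Stein factorization gives a proper flat $\pi:\CX\to C:=\Spec\CO_K$ whose generic fiber $X$ is geometrically connected over $K$, and, since $\CX$ is regular and $\Char K=0$, also smooth over $K$. To invoke Theorem~\ref{mainthm} I need $X$ to be projective; this can be arranged by replacing $\CX$ by a regular projective modification (Chow's lemma together with Hironaka desingularization, then comparing Brauer groups under the proper birational morphism of regular schemes), and it suffices to prove finiteness for this replacement.

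With that reduction in place, Theorem~\ref{mainthm} supplies an exact sequence up to finite groups
$$
0\longrightarrow \Sha(\Pic^0_{X/K})\longrightarrow \Br(\CX)\longrightarrow \Br(X_{\olsi{K}})^{G_K}\longrightarrow 0.
$$
Since $\Br(\CX)$ is of cofinite type by Proposition~\ref{cofin}, it is enough to show that the two outer terms are finite.

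For the right-hand term, the lemma immediately preceding this theorem reduces the finiteness of $\Br(X_{\olsi K})^{G_K}$ from varieties of arbitrary dimension to the surface case, which is the first hypothesis. For the left-hand term, I would use the curve-section construction of \cite[\S 3.3]{Yua} recalled in the proof of Lemma~\ref{Kera}: there exist smooth projective geometrically connected curves $Z_i\subset X$ and an abelian variety $A/K$ such that the $G_K$-equivariant map
$$
\Pic(X_{\olsi K})\times A(\olsi K)\longrightarrow \bigoplus_i\Pic(Z_{i,\olsi K})
$$
has kernel and cokernel killed by some positive integer $N$. Passing to identity components yields a $K$-isogeny between $\Pic^0_{X/K}\times A$ and $\bigoplus_i \Jac(Z_i)$. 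Since an isogeny of abelian varieties over a number field induces a map on $\Sha$ with kernel and cokernel of finite exponent, and $\Sha$ of an abelian variety over a number field is of cofinite type, finiteness of $\Sha$ is invariant under isogeny. The second hypothesis gives that each $\Sha(\Jac(Z_i))=\Sha(\Pic^0_{Z_i/K})$ is finite, hence so is $\Sha(\Pic^0_{X/K})\times\Sha(A)$, and therefore so is $\Sha(\Pic^0_{X/K})$.

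The principal technical hurdle is the initial projectivity reduction for the generic fiber: Theorem~\ref{mainthm} requires $X$ projective, whereas only $\CX$ proper is assumed. Once this is settled by a standard Chow--Hironaka argument comparing Brauer groups of $\CX$ with those of a projective regular modification, the remainder of the proof is a direct combination of Theorem~\ref{mainthm} with the preceding lemma and the Lefschetz-type curve-section trick for abelian varieties.
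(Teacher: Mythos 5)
Your proposal follows the paper's skeleton exactly: Stein factorization of $\CX\to\Spec\ZZ$ through $\Spec\CO_K$, Theorem \ref{mainthm} to split the problem into $\Sha(\Pic^0_{X/K})$ and $\Br(X_{\olsi{K}})^{G_K}$, the lemma immediately preceding the theorem to reduce the geometric Brauer group to surfaces, and a curve-section argument plus isogeny-invariance of the finiteness of $\Sha$ to reduce the Tate--Shafarevich part to curves. On that last point the paper argues slightly differently but equivalently: it takes a smooth hyperplane section $D\subset X$, notes $\Pic^0_{X/K}\to\Pic^0_{D/K}$ has finite kernel because $H^1(X_{\olsi{K}},\QQ_\ell)\to H^1(D_{\olsi{K}},\QQ_\ell)$ is injective, invokes Poincar\'e reducibility to get an abelian variety $A$ with $\Pic^0_{X/K}\times A$ isogenous to $\Pic^0_{D/K}$, and inducts down to curves; your route through Yuan's curves $Z_i$ and the map $\Pic(X_{\olsi{K}})\times A(\olsi{K})\to\bigoplus_i\Pic(Z_{i,\olsi{K}})$ yields the same conclusion, though your assertion that passing to identity components gives an isogeny onto $\bigoplus_i\Jac(Z_i)$ needs a small divisibility argument (the quotient abelian variety has $\olsi{K}$-points killed by $N$ modulo a finitely generated group, hence vanishes) that the paper's Poincar\'e-reducibility phrasing sidesteps.

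The genuine gap is your projectivity reduction. You propose to replace $\CX$ by a regular \emph{projective} modification via ``Chow's lemma together with Hironaka desingularization.'' But $\CX$ is flat over $\ZZ$, a mixed-characteristic scheme: Hironaka's theorem applies only to varieties over fields of characteristic zero, and after Chow's lemma the projective modification $\CX''\to\CX$ is again an arithmetic scheme whose resolution of singularities is an open problem in dimension $\geq 4$ (Lipman handles dimension $2$, Cossart--Piltant dimension $3$). Since the theorem concerns regular proper flat $\ZZ$-schemes of arbitrary dimension, your replacement $\CX'$ is not known to exist, and the reduction collapses precisely in the cases the theorem is about; this cannot be dismissed as a ``standard Chow--Hironaka argument.'' The paper instead performs the birational reduction \emph{only on the generic fiber}, where characteristic-zero resolution is legitimate: it chooses a smooth projective $X'$ birational to the smooth proper $X$ over $K$, and uses that the Brauer group of a smooth proper variety is a birational invariant (so $\Br(X_{\olsi{K}})^{G_K}\cong\Br(X'_{\olsi{K}})^{G_K}$) and that $\Pic^0_{X/K}$ and $\Pic^0_{X'/K}$ are isogenous, so both hypothesized finiteness statements transfer from the projective model $X'$ to the actual generic fiber $X$; Theorem \ref{mainthm} is then applied to the original model $\CX$, which is never modified. (Strictly speaking the paper is itself a little quick here, since Theorem \ref{mainthm} is stated for projective generic fibers, but the essential point stands: the reduction happens at the level of $X$, not of $\CX$.) If you substitute this generic-fiber reduction for your model modification, the rest of your argument goes through and coincides with the paper's proof.
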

\begin{proof}
Let $X$ be a smooth proper geometrically connected variety over a number field $K$. By resolution of singularity in charateristic $0$, there is a smooth projective variety $X^\prime$ and a birational morphism $X^\prime \lra X$. Since Brauer group is a birational inviants and $\Pic^0_{X/K}$ and $\Pic^0_{X^\prime/K}$ are isogenous to each other, we may assume that $X$ is projective. If $\dim(X)\geq 2$, taking a smooth hyperplane section $D$ in $X$, then the induced map $\Pic^0_{X/K}\lra\Pic^0_{D/K}$ has a finite kernel since $H^1(X_{\olsi{K}},\QQ_\ell)\lra H^1(D_{\olsi{K}},\QQ_\ell)$ is injective. Since $\Pic^0_{X/K}\lra \Pic^0_{D/K}$ has a finite kernel, there is an abelian variety $A/K$ such that $\Pic^0_{X/K}\times A$ is isogenous to $\Pic^0_{D/K}$( cf.\cite[\S 3.3, p17]{Yua} for details). Thus, the finiteness of $\Sha(\Pic^0_{X/K})$ can be reduced to curves. By the above lemma, the finiteness of $\Br(X_{\olsi{K}})^{G_K}$ can be reduced to surfaces. Let $X$ denote the generic fiber of $\CX$. By the assumption,  $\Sha(\Pic^0_{X/K})$ and $\Br(X_{\olsi{K}})^{G_K}$ are finite. By Theorem \ref{mainthm}, $\Br(\CX)$ is finite.

\end{proof}
\begin{thm}
Assuming that $\Br(\CX)$ is finite for all $3$-dimensional regular proper flat schemes over $\ZZ$, then $\Br(\CX)$ is finite for all regular proper flat schemes over $\ZZ$.
\end{thm}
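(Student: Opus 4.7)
My plan is to derive the hypotheses of the preceding Applications theorem from the standing assumption and then quote that theorem to conclude. Recall that the preceding theorem asserts the finiteness of $\Br(\CX)$ for every regular proper flat $\CX/\ZZ$ from two inputs: the finiteness of $\Br(X_{\olsi{K}})^{G_K}$ for smooth projective surfaces $X$ over number fields $K$, and the finiteness of $\Sha(\Pic^0_{X/K})$ for smooth projective geometrically connected curves $X/K$. I would establish both of these as consequences of our $3$-dimensional hypothesis.

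For the surface input, given any smooth projective surface $X$ over a number field $K$, I would take a projective closure of $X$ in $\PP^N_{\CO_K}$ and resolve singularities of the resulting arithmetic threefold to obtain a regular proper flat $\CX \to \Spec\CO_K$ with generic fiber $X$. Then $\CX$ has absolute dimension $3$ as a $\ZZ$-scheme, so the standing hypothesis gives $\Br(\CX)$ finite. Theorem \ref{mainthm} applied to $\CX \to \Spec\CO_K$ presents $\Br(X_{\olsi{K}})^{G_K}$ as a quotient of $\Br(\CX)$ up to finite error, hence $\Br(X_{\olsi{K}})^{G_K}$ is finite.

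For the curve input, given a smooth projective geometrically connected curve $X/K$, I would take a regular proper flat model $\CX \to \Spec\CO_K$ (existence by Lipman's theorem on arithmetic surfaces) and form the $3$-dimensional $\ZZ$-scheme $\CY := \CX \times_\ZZ \PP^1_\ZZ$. Since $\PP^1_\ZZ/\ZZ$ is smooth, the projection $p : \CY \to \CX$ is smooth, so $\CY$ is regular; it is clearly proper and flat over $\ZZ$ of absolute dimension $3$, and so by hypothesis $\Br(\CY)$ is finite. Any section $s$ of $p$ (for example the constant section at $0 \in \PP^1$) satisfies $p \circ s = \mathrm{id}_{\CX}$, whence $s^{*} \circ p^{*} = \mathrm{id}_{\Br(\CX)}$ by functoriality, exhibiting $\Br(\CX)$ as a direct summand of $\Br(\CY)$. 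Thus $\Br(\CX)$ is finite, and Theorem \ref{mainthm} again identifies $\Sha(\Pic^0_{X/K})$ as a subgroup of $\Br(\CX)$ up to finite error, so $\Sha(\Pic^0_{X/K})$ is finite. Combining both inputs with the preceding theorem gives the stated conclusion.

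The main technical obstacle is the existence of a regular proper flat arithmetic model of a smooth projective surface over $K$, which requires resolution of singularities for excellent $3$-dimensional schemes over $\CO_K$, a deep theorem of Cossart-Piltant. All the other moves are formal: the retraction argument along the $\PP^1$-bundle bootstraps our $3$-dimensional hypothesis down to the $2$-dimensional arithmetic models needed in the curve case, and the preceding Applications theorem combined with Theorem \ref{mainthm} handles the rest.
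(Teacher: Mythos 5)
Your proposal is correct, but it takes a genuinely different route from the paper at the one step where something nontrivial is needed. Both arguments begin the same way: quote the preceding Applications theorem and reduce to proving finiteness of $\Br(X_{\olsi{K}})^{G_K}$ and $\Sha(\Pic^0_{X/K})$ for surfaces (and curves) over number fields. The paper, however, never produces a regular proper flat model of the surface $X$ itself: it invokes de Jong's alteration theorem \cite[Cor. 5.1]{deJ1} to obtain an alteration $X'\lra X$ (after a finite extension of $K$) such that $X'$ admits a regular proper flat model, applies the $3$-dimensional hypothesis and Theorem \ref{mainthm} to $X'$, and then transfers finiteness back to $X$ by two soft lemmas: $\Pic^0_{X/K}\lra \Pic^0_{X'/K}$ has finite kernel (injectivity of $H^1(X_{\olsi{K}},\QQ_\ell)\lra H^1(X'_{\olsi{K}},\QQ_\ell)$), so finiteness of $\Sha$ descends, and a restriction--corestriction argument for the function field extension $K(X_{\olsi{K}})\subset K(X'_{\olsi{K}})$ shows that $\Br(X_{\olsi{K}})^{G_K}\lra \Br(X'_{\olsi{K}})^{G_K}$ has finite kernel. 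You instead resolve the closure of $X$ in $\PP^N_{\CO_K}$ via Cossart--Piltant's resolution of arithmetical threefolds; this is legitimate (their theorem gives an isomorphism over the regular locus, so the generic fiber of the resolved model is still $X$, which your argument needs) and it eliminates the transfer lemmas entirely, but at the cost of a much deeper and more recent input than de Jong's alterations --- and the alteration strategy is the one that scales, since resolution is unavailable in higher dimensions, which is exactly why the paper's surrounding lemmas (via Ambrosi) reduce everything to surfaces rather than to regular models. Your handling of the curve input also differs, and is arguably more explicit: the paper silently absorbs curves into the surface case (for a curve $C$ one has $\Sha(\Pic^0_{C/K})=\Sha(\Pic^0_{C\times\PP^1/K})$), whereas you take Lipman's $2$-dimensional regular models and use the retraction $s^*\circ p^*=\mathrm{id}$ along $\PP^1_{\CX}$ to bootstrap the $3$-dimensional hypothesis down a dimension; that trick is correct as stated. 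One small point to patch in your surface step: Theorem \ref{mainthm} requires a projective \emph{geometrically connected} generic fiber, so you should first replace $K$ by $\Gamma(X,\CO_X)$ and use the Stein factorization, as in the paper's introduction, before applying it.
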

\begin{proof}
 By the theorem above, it suffices to show that $\Br(X_{\olsi{K}})^{G_K}$ and $\Sha(\Pic^0_{X/K})$ are finite for all smooth projective geometrically connected surfaces over a number field $K$ under the assumption. By de Jong's theorem \cite[Cor. 5.1]{deJ1}, there is an alteration $X^\prime \lra X$ such that $X^\prime$ admits a flat proper regular integral model over the ring of integers of a number field. By extending $K$, we may assume that $X^\prime$ is smooth and geometrically connected over $K$. Then, $\Pic^0_{X/K}\lra\Pic^0_{X^\prime/K}$ has a finite kernel since $H^1(X_{\olsi{K}},\QQ_\ell)\lra H^1(X^\prime_{\olsi{K}},\QQ_\ell)$ is injective. Thus, the finiteness of $\Sha(\Pic^0_{X^\prime/K})$ implies the finiteness of $\Sha(\Pic^0_{X/K})$. Since $K(X^\prime_{\olsi{K}})$ is a finite extension over $K(X_{\olsi{K}})$, by the restriction-corestriction argument, the induced map $\Br(K(X_{\olsi{K}}))\lra 
\Br(K(X^\prime_{\olsi{K}}))$ has a kernel of finite exponent. It follows that
$$\Br(X_{\olsi{K}})^{G_K}\lra 
\Br(X^\prime_{\olsi{K}})^{G_K}$$
has a finite kernel. By the assumption and Theorem \ref{mainthm}, $\Br(X^\prime_{\olsi{K}})^{G_K}$ and $\Sha(\Pic^0_{X^\prime/K})$ are finite. Thus, $\Br(X_{\olsi{K}})^{G_K}$ and $\Sha(\Pic^0_{X/K})$ are also finite. This completes the proof.
\end{proof}

\subsection{Examples}
\begin{prop}
Let $X$ be a principal homogeneous space of an abelian variety over a number field $K$. Assuming that  $X/K$  admits a proper regular model  $\pi:\CX \lra C$ as in Theorem \ref{mainthm}, then there is an isomorphism up to finite groups
$$ \Sha(\Pic_{X/K}^{0})\cong \Br(\CX).
$$

\end{prop}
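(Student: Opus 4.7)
The plan is to reduce everything to Theorem \ref{mainthm}. Since $\pi:\CX\to C$ is a regular proper flat model of $X/K$ satisfying the hypotheses of that theorem, it already delivers an exact sequence up to finite groups
$$0\longrightarrow \Sha(\Pic_{X/K}^0)\longrightarrow \Br(\CX)\longrightarrow \Br(X_{\olsi K})^{G_K}\longrightarrow 0.$$
Thus the desired almost-isomorphism $\Sha(\Pic_{X/K}^0)\cong \Br(\CX)$ is equivalent to the finiteness of $\Br(X_{\olsi K})^{G_K}$, and the entire task becomes establishing this finiteness.

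To handle $\Br(X_{\olsi K})^{G_K}$, I would exploit that $X$ is a twisted form of the abelian variety $A$ under which it is a torsor. After fixing a point $P\in X(\olsi K)$, the map $a\mapsto a+P$ gives an isomorphism of $\olsi K$-schemes $\phi_P:A_{\olsi K}\xrightarrow{\sim}X_{\olsi K}$. A direct computation shows that, for $\sigma\in G_K$, the transported action $\phi_P^{-1}\circ\sigma_X\circ\phi_P$ on $A_{\olsi K}$ equals $t_{a_\sigma}\circ\sigma_A$, where $\sigma_A$ is the natural $G_K$-action coming from the $K$-structure of $A$, $t_{a_\sigma}$ is translation by $a_\sigma:=\sigma_X(P)-P\in A(\olsi K)$, and $\sigma\mapsto a_\sigma$ is a $1$-cocycle representing the class $[X]\in H^1(K,A)$. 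Consequently, the action induced on $\Br(A_{\olsi K})$ via $\phi_P^*$ differs from the natural $G_K$-action only by the pull-back $t_{a_\sigma}^*$.

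The main step, and the one requiring actual content, is therefore to check that translations act trivially on $\Br(A_{\olsi K})$. Since $K$ has characteristic zero, it suffices to verify this on $\Br(A_{\olsi K})[n]$ for every $n\geq 1$. Using the Kummer sequence
$$0\longrightarrow \Pic(A_{\olsi K})/n\longrightarrow H^2(A_{\olsi K},\mu_n)\longrightarrow \Br(A_{\olsi K})[n]\longrightarrow 0,$$
together with the divisibility of $\Pic^0(A_{\olsi K})=A^t(\olsi K)$, the first term identifies with $\NS(A_{\olsi K})/n$. The connected algebraic group $A$ acts by translations on $A_{\olsi K}$, and the general principle that a connected algebraic group acts trivially on \'etale cohomology of a complete variety with constructible coefficients then forces $A(\olsi K)$ to act trivially on both $H^2(A_{\olsi K},\mu_n)$ and on the discrete group $\NS(A_{\olsi K})/n$; hence also on the quotient $\Br(A_{\olsi K})[n]$.

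With this in hand, $\phi_P^*$ becomes a $G_K$-equivariant isomorphism $\Br(X_{\olsi K})\cong \Br(A_{\olsi K})$, giving $\Br(X_{\olsi K})^{G_K}\cong \Br(A_{\olsi K})^{G_K}$. The latter group is finite by the theorem of Skorobogatov--Zarhin \cite{SZ} for abelian varieties, which closes the argument and yields the asserted almost-isomorphism.
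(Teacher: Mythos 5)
Your proof is correct and takes essentially the same route as the paper: both deduce the statement by combining Theorem \ref{mainthm} with the Skorobogatov--Zarhin finiteness theorem, the paper simply citing \cite[Thm.~1.1]{SZ}, whose statement already covers principal homogeneous spaces of abelian varieties (not just abelian varieties themselves). Your cocycle/translation-invariance argument reducing $\Br(X_{\olsi K})^{G_K}$ to $\Br(A_{\olsi K})^{G_K}$ is sound, but it re-proves a reduction already contained in that citation.
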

\begin{proof}
By \cite[Thm. 1.1]{SZ}, $\Br(X_{K^s})^{G_K}$ is finite. Then the claim follows directly from Theorem \ref{mainthm}.
\end{proof}
\begin{prop}(Tankeev)
Let $\pi:\CX \lra C$ a proper flat morphism as in Theorem \ref{mainthm}. Assuming that the generic fiber $X$ is a $K3$ surface, then $\Br(\CX)$ is finite.
\end{prop}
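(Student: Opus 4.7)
The plan is to invoke Theorem \ref{mainthm} and show that both outer terms of the exact sequence up to finite groups
$$0\lra \Sha(\Pic^0_{X/K})\lra \Br(\CX)\lra \Br(X_{\olsi{K}})^{G_K}\lra 0$$
are finite in the $K3$ case, which forces $\Br(\CX)$ to be finite.

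First I would dispose of the left-hand term. Since $X$ is a $K3$ surface over $K$, we have $h^{0,1}(X)=0$, so $\Pic^0_{X/K}=0$. Consequently $\Sha(\Pic^0_{X/K})=0$, and in particular it is finite.

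Next I would handle the right-hand term. The Skorobogatov--Zarhin theorem \cite[Thm. 1.1]{SZ}, already used in the preceding proposition, asserts that $\Br(X_{K^s})^{G_K}$ is finite for any $K3$ surface $X$ over a finitely generated field of characteristic zero; in particular this applies to our $K3$ surface over the number field $K$. Therefore $\Br(X_{\olsi{K}})^{G_K}$ is finite.

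With both $\Sha(\Pic^0_{X/K})$ and $\Br(X_{\olsi{K}})^{G_K}$ finite, the exactness up to finite groups in Theorem \ref{mainthm} yields that $\Br(\CX)$ is finite as well. There is no real obstacle here beyond citing the inputs correctly; the whole content of the statement is the combination of Theorem \ref{mainthm} with the vanishing of $\Pic^0$ for $K3$ surfaces and the finiteness result of Skorobogatov--Zarhin.
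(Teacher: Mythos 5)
Your proof is correct and follows essentially the same route as the paper: $H^1(X,\SO_X)=0$ forces $\Pic^0_{X/K}=0$, Skorobogatov--Zarhin gives the finiteness of $\Br(X_{\olsi{K}})^{G_K}$, and Theorem \ref{mainthm} concludes. One small slip: the relevant finiteness result for $K3$ surfaces is \cite[Thm.~1.2]{SZ} (the paper's citation), not \cite[Thm.~1.1]{SZ}, which concerns abelian varieties and is what the preceding proposition uses.
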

\begin{proof} 
 Since $H^1(X,\SO_X)=0$, we have $\Pic^0_{X/K}=0$. By \cite[Thm. 1.2]{SZ}, $\Br(X_{\olsi{K}})^{G_K}$ is finite. Then the claim follows directly from Theorem \ref{mainthm}. 
\end{proof}


\Addresses
\end{document}